\numberwithin{equation}{section}
\newcommand{\C}{{\mathbb{C}}}
\newcommand{\I}{{\mathfrak{I}}}
\newcommand{\N}{{\mathbb{N}}}
\newcommand{\R}{{\mathbb{R}}}
\newcommand{\Z}{{\mathbb{Z}}}
\newcommand{\op}{\textit{op}}
\newcommand{\mcR}{{\mathcal R}}
\newcommand{\mcM}{{\mathcal M}}
\newcommand{\qtq}[1]{\quad\text{#1}\quad}
\newcommand{\vk}{\varkappa}
\newcommand{\ddt}{\frac{d\ }{dt}}
\DeclareMathOperator{\tr}{tr}
\DeclareMathOperator{\dist}{dist}
\DeclareMathOperator{\supp}{supp}
\let\Im=\undefined\DeclareMathOperator{\Im}{Im}
\let\det=\undefined\DeclareMathOperator{\det}{det}
\newtheorem{theorem}{Theorem}[section]
\newtheorem{prop}[theorem]{Proposition}
\newtheorem{lemma}[theorem]{Lemma}
\newtheorem{corollary}[theorem]{Corollary}
\theoremstyle{definition}
\newtheorem{definition}[theorem]{Definition}
\theoremstyle{remark}
\newtheorem*{remark}{Remark}
\newtheorem*{remarks}{Remarks}
\begin{document}

\title{KdV is wellposed in $H^{-1}$}

\author{Rowan Killip and Monica Vi\c{s}an}

\address
{Rowan Killip\\
Department of Mathematics\\
University of California, Los Angeles, CA 90095, USA}
\email{killip@math.ucla.edu}

\address
{Monica Vi\c{s}an\\
Department of Mathematics\\
University of California, Los Angeles, CA 90095, USA}
\email{visan@math.ucla.edu}

\begin{abstract}
We prove global well-posedness of the Korteweg--de Vries equation for initial data in the space $H^{-1}(\R)$.  This is sharp in the class of $H^{s}(\R)$ spaces.  Even local well-posedness was previously unknown for $s<-3/4$.  The proof is based on the introduction of a new method of general applicability for the study of low-regularity well-posedness for integrable PDE, informed by the existence of commuting flows.   In particular, as we will show, completely parallel arguments give a new proof of global well-posedness for KdV with periodic $H^{-1}$ data, shown previously by Kappeler and Topalov, as well as global well-posedness for the 5th order KdV equation in $L^2(\R)$.

Additionally, we give a new proof of the a priori local smoothing bound of Buckmaster and Koch for KdV on the line.  Moreover, we upgrade this estimate to show that convergence of initial data in $H^{-1}(\R)$ guarantees convergence of the resulting solutions in $L^2_\text{loc}(\R\times\R)$.  Thus, solutions with $H^{-1}(\R)$ initial data are distributional solutions.
\end{abstract}

\maketitle

\section{Introduction}

The Korteweg--de Vries equation
\begin{align}\label{KdV}\tag{KdV}
\ddt q = - q''' + 6qq'
\end{align}
was derived in \cite{KdV1895} to explain the observation of solitary waves in a shallow channel of water.  Specifically, they sought to definitively settle (to use their words) the debate over whether such solitary waves are consistent with the mathematical theory of a frictionless fluid, or whether wave fronts must necessarily steepen.  The equation itself, however, appears earlier; see \cite[p. 77]{Boo}.  The term \emph{solitary wave} has now been supplanted by \emph{soliton}, a name coined in \cite{KruskalZubusky} and inspired by the particle-like interactions they observed between solitary waves in their numerical simulations of \eqref{KdV}.

In a series of papers, researchers at Princeton's Plasma Physics Laboratory demonstrated that equation \eqref{KdV} exhibits a wealth of novel features, including the existence of infinitely many conservation laws \cite{MR0252826} and the connection to the scattering problem for one-dimensional Schr\"odinger equations \cite{GGKM}.  Nowadays, we say that \eqref{KdV} is a completely integrable system (cf. \cite{MR0303132}).

Although we shall focus on mathematical matters here, \eqref{KdV} continues to be an important effective model for a diverse range of physical phenomena; see, for example, the review \cite{MR1329553} occasioned by the centenary of \cite{KdV1895}.

One of the most basic mathematical questions one may ask of \eqref{KdV} is whether it is well-posed.  This is the question of the existence and uniqueness of solutions, together with the requirement that the solution depends continuously on time and the initial data.  As we shall discuss, this topic has attracted several generations of researchers who have successively enlarged the class of initial data for which well-posedness can be shown.  Our principal contribution is the following:

\begin{theorem}[Global well-posedness]\label{T:main}
The equation \eqref{KdV} is globally well-posed for initial data in $H^{-1}(\R)$ or $H^{-1}(\R/\Z)$ in the following sense:  In each geometry, the solution map extends uniquely from Schwartz space to a jointly continuous map $\Phi:\R\times H^{-1}\to H^{-1}$.  Moreover, for each initial data $q\in H^{-1}$, the orbit $\{\Phi(t,q) : t\in\R\}$ is uniformly bounded and equicontinuous in $H^{-1}$.
\end{theorem}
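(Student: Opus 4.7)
The plan is to exploit the complete integrability of \eqref{KdV} through a one-parameter family of Hamiltonian flows that commute with it and regularize the dynamics at low regularity. For $\kappa$ large, set $R_0(\kappa) := (-\partial_x^2+\kappa^2)^{-1}$ and consider the renormalized perturbation determinant
\[
\alpha(\kappa;q) \;:=\; \tr\bigl\{\log\bigl(I + \sqrt{R_0(\kappa)}\,q\,\sqrt{R_0(\kappa)}\bigr) - \sqrt{R_0(\kappa)}\,q\,\sqrt{R_0(\kappa)}\bigr\}.
\]
This functional is well defined on bounded sets of $H^{-1}$ once $\kappa$ is large enough, is conserved by \eqref{KdV} (a consequence of the fact that $-\partial_x^2+q$ is the Lax operator of the hierarchy), and, up to lower-order corrections, satisfies $\alpha(\kappa;q) \asymp \kappa^{-3}\|q\|_{H^{-1}}^2$. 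The key auxiliary object is the Hamiltonian flow $\Phi_\kappa$ generated by $-16\kappa^5\alpha(\kappa;\cdot)$ (with an appropriate renormalization), which commutes with \eqref{KdV} and, formally, converges to it as $\kappa\to\infty$.

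The proof then breaks into four steps. (i) For Schwartz data, both \eqref{KdV} and $\Phi_\kappa$ admit classical global solutions; conservation of $\alpha(\kappa;\cdot)$ along each, combined with its equivalence to $\|\cdot\|_{H^{-1}}^2$, gives an a priori bound $\|q(t)\|_{H^{-1}}\le C(\|q(0)\|_{H^{-1}})$ independent of $t$ and $\kappa$. (ii) A sharper analysis of $\alpha$, based on a Littlewood--Paley decomposition of $q$ and the fact that $R_0(\kappa)$ acts as a soft frequency cutoff at scale $\kappa$, upgrades this to uniform equicontinuity of orbits in $H^{-1}$, again independent of $\kappa$. (iii) On Schwartz data, one proves $\Phi_\kappa(t,\cdot)\to\Phi(t,\cdot)$ in $C_t H^{-1}_{\mathrm{loc}}$ as $\kappa\to\infty$, a statement extracted from trace-ideal expansions of $\alpha$ and the commutativity of $\Phi_\kappa$ with $\Phi$. (iv) Combining (i)--(iii) with the density of Schwartz functions in $H^{-1}$ and an Arzel\`a--Ascoli argument extends $\Phi$ uniquely to a jointly continuous map $\R\times H^{-1}\to H^{-1}$; the uniform bound and equicontinuity of orbits on Schwartz data pass to the limit, giving the second assertion of the theorem. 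The periodic case $H^{-1}(\R/\Z)$ runs in complete parallel, substituting the torus-adapted resolvent and determinant.

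The principal obstacle is step (iii): at the critical regularity $H^{-1}$, one cannot directly compare two rough solutions because the nonlinearity $qq'$ is not even a distribution and energy or $X^{s,b}$ estimates are unavailable. The distinctive feature of the method is to sidestep such a comparison. The $\kappa\to\infty$ limit is carried out on smooth data, where all objects are classical, and the smallness of the deviation between the two flows is obtained from the smallness of the difference of their Hamiltonians, controlled by Hilbert--Schmidt and trace-norm bounds on operators built from $R_0(\kappa)$. The $H^{-1}$ solution map is then produced not by differencing two rough solutions but as the uniform limit of smooth approximants, whose existence is guaranteed by the equicontinuity from step (ii).
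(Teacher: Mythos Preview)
Your architecture is right---approximate $H_{\mathrm{KdV}}$ by commuting Hamiltonians $H_\kappa$, use conservation of $\alpha(\kappa;\cdot)$ for a priori bounds and equicontinuity, then pass to the limit---but two load-bearing pieces are missing, and one is misstated.

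First, you never establish that the $H_\kappa$ flow is well-posed on $H^{-1}$; ``regularizing'' is not a proof. In the paper, $H_\kappa=-16\kappa^5\alpha(\kappa)+4\kappa^2 P$ generates $\partial_t q = 16\kappa^5 g'(x;\kappa,q)+4\kappa^2 q'$, where $g$ is the diagonal of the Green's function of $-\partial_x^2+q+\kappa^2$. Well-posedness follows because $q\mapsto g-\tfrac{1}{2\kappa}$ is a real-analytic diffeomorphism from a ball in $H^{-1}$ into $H^{+1}$ (Proposition~\ref{P:diffeo}), making the vector field Lipschitz on $H^{-1}$. This diffeomorphism is the paper's central technical device and does not appear in your proposal.

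Second, your step (iii) as stated---pointwise convergence $\Phi_\kappa\to\Phi$ on Schwartz data, deduced from ``smallness of the difference of their Hamiltonians''---is both weaker than what is needed and lacks a mechanism. What the argument actually requires is that the \emph{difference flow} $e^{tJ\nabla(H_{\mathrm{KdV}}-H_\kappa)}$ be close to the identity in $H^{-1}$, \emph{uniformly} over the equicontinuous set $Q^*$ of all evolved data (see \eqref{diff 1} and \eqref{56}). Smallness of $H_{\mathrm{KdV}}-H_\kappa$ as a functional does not give this; one must control the flow it generates. The paper does so by tracking not $q$ but the good unknown $\varkappa-\tfrac{1}{2g(x;\varkappa)}$ at a fixed auxiliary energy: combining \eqref{E:l5.1g} and \eqref{H kappa flow g} yields an explicit formula for $\partial_t\tfrac{1}{2g(\cdot;\varkappa)}$ under the difference flow whose $H^{-2}$ norm vanishes as $\kappa\to\infty$ uniformly on $Q^*$ (Proposition~\ref{P:equi}); equicontinuity upgrades this to $H^{+1}$, and the diffeomorphism transfers it back to $q$. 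Nothing in your outline substitutes for this step.

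Finally, Arzel\`a--Ascoli is the wrong closing tool: it gives only subsequential limits, not uniqueness of the extension. The paper instead proves directly that Schwartz approximants are Cauchy in $C_tH^{-1}$, using the factorization $\Phi(t)=e^{tJ\nabla(H_{\mathrm{KdV}}-H_\kappa)}\circ\Phi_\kappa(t)$ together with the $H^{-1}$ well-posedness of $\Phi_\kappa$ and the uniform difference-flow estimate just described.
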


On the circle $\R/\Z$, Schwartz space is coincident with $C^\infty(\R/\Z)$; on the line, it is comprised of those $C^\infty(\R)$ functions that decay (along with their derivatives) faster than any polynomial as $|x|\to\infty$.  For the definition of $H^{-1}(\R)$ and $H^{-1}(\R/\Z)$, see subsection~\ref{S:1.1}; informally, they are comprised of those tempered distributions that are derivatives of $L^2$ functions.  For the precise definition of equicontinuity in the $H^s$ setting, see \eqref{E:equi1}.

The fact that Schwartz-space initial data leads to unique global solutions to \eqref{KdV} that remain in Schwartz class has been known for some time; see, for example, \cite{MR0385355,MR0759907,Sj,MR0410135,MR0261183}.  Indeed, in this class, the solution map is known not only to be continuous, but infinitely differentiable in both variables.  We shall rely on this result in what follows.

In the case of \eqref{KdV} posed on the torus (or equivalently for periodic initial data), Theorem~\ref{T:main} reproduces the principal results of \cite{MR2267286}.  Note that because the circle is compact, uniform boundedness and equicontinuity of the orbit is equivalent to it being pre-compact.  In the line case, one does not expect orbits to be pre-compact; both solitons and radiation preclude tightness from holding globally in time.

The papers \cite{MR2830706,MR2927357} show that well-posedness cannot persist (in either geometry) in $H^{s}$ for any $s<-1$.  In this sense, Theorem~\ref{T:main} is sharp.  On the other hand, one may consider well-posedness at higher regularity $s>-1$.  Existence and uniqueness are immediate from the case $s=-1$; the key is to demonstrate that continuous dependence remains valid in this stronger topology.  In this paper, we will settle the cases left open by prior work, namely, the wellposedness of \eqref{KdV} in $H^{s}(\R)$ with $-1\leq s< -\frac34$.  (On the circle, all $s\geq -1$ were treated already in \cite{MR2267286}.) In fact, the proof of Corollary~\ref{C:2} provides a simple uniform treatment of all $-1\leq s<0$ and adapts trivially to the case of the circle also.

The notion of solution used here (unique limits of Schwartz solutions) coincides with that in \cite{MR2267286} and is informed by several important considerations.  Firstly, as the notion of a solution in the case of Schwartz initial data is firmly settled, any notion of a solution to \eqref{KdV} leading to well-posedness in $H^{-1}$ must produce solutions identical to those given by Theorem~\ref{T:main}.

Secondly, for functions that are merely $C_t H^{-1}_x$ it is not possible to make sense of the nonlinearity in \eqref{KdV} as a space-time distribution in either geometry.  While the local smoothing effect (see subsection~\ref{SS:ls}) provides a potential resolution of this problem in the line setting, there is no natural alternative notion of a weak solution in the circle geometry.  Any methodology that purports to apply in wide generality must adopt a notion of solution that applies in wide generality.

A wider notion of solution was considered in \cite{Christ'05}, namely, limits of smooth solutions in the presence of smooth asymptotically vanishing forcing.  That paper shows (see \cite[\S2.7]{Christ'05}) that with this wider notion of solution, uniqueness cannot be guaranteed for $C_t H^{s}(\R/\Z)$ solutions to \eqref{KdV} already for $s<0$.

From Theorem~\ref{T:main}, we see that the map $\Phi$ is continuous, as was also shown in \cite{MR2267286} for the circle case.  It is natural to ask if this continuity may be expressed more quantitatively.  In some sense, the answer is no: it is shown in \cite{MR2018661} that the data to solution map cannot be uniformly continuous on bounded sets when $s<-\frac34$ in the line case or when $s<-\frac12$ in the circle case.  Nevertheless, the arguments presented here are sufficiently transparent that one may readily obtain information on the modulus of continuity of $q\mapsto\Phi(t,q)$.  Specifically, we find that the key determiners of the modulus of continuity at an initial datum $q\in H^{-1}(\R)$ are the time $t$ in question and the rate at which
$$
\int \frac{|\hat q(\xi)|^2\,d\xi}{\xi^2+4\kappa^2} \to 0\qtq{as} \kappa\to\infty.
$$
Evidently, this integral does not converge to zero uniformly on any open set of initial data.

Let us now turn our attention to a discussion of prior work proving well-posedness for \eqref{KdV}. Discussion of weak solutions (without uniqueness) is postponed until subsection~\ref{SS:ls}.  Our discussion will not be exhaustive; the body of literature on \eqref{KdV} is simply immense.  Nor will we insist on a strict chronology.

Early work on the local and global well-posedness of \eqref{KdV} treated it as a quasi-linear hyperbolic problem.  The appearance of the derivative in the nonlinearity prohibits simple contraction mapping arguments from closing.  The principal methods employed were (i) compactness and uniqueness arguments (e.g. \cite{MR0454425,MR0261183}) combined with parabolic regularization, (ii) convergence of Picard iterates with (e.g. \cite{MR0312097}) or without (e.g. \cite{MR0407477}) parabolic regularization, and (iii) approximation by the Benjamin--Bona--Mahoney (BBM) equation \cite{MR0393887,MR0385355}.

The BBM equation was introduced in \cite{MR0427868}; this equation has a much more regular nonlinearity and global well-posedness was shown there by simple contraction mapping arguments.  The BBM equation has the same Hamiltonian as \eqref{KdV}, namely,
\begin{equation}\label{I HKdV}
H_\text{KdV} (q) := \int \tfrac12 q'(x)^2 + q(x)^3\,dx;
\end{equation}
however the underlying symplectic structure is different.  Of all the prior approaches we know of, the one we follow here is closest in spirit to that of Bona--Smith \cite{MR0385355}, since both we and they employ the idea of approximating the full flow by another Hamiltonian evolution that is more readily controlled.

Incidentally, the problem of local  well-posedness of \eqref{KdV} in Schwartz space, which we shall take for granted here, is rather easier than the works just cited, because one may safely lose regularity in proving continuous dependence of the solution on the initial data.

While multiple authors sought to obtain well-posedness in $H^{s}$ for $s$ as small as possible, these early attempts did not succeed in proving local well-posedness beyond the regime $s>3/2$.  Most significantly, this does not reach the level $s=1$ at which one may upgrade local to  global well-posedness by exploiting conservation of the Hamiltonian.  Nevertheless, global well-posedness was obtained at this time for $s\geq 2$ by using the conserved quantities at such higher regularity discovered in \cite{MR0252826}.

To progress further in this vein, the key has been to exploit the dispersive property of \eqref{KdV}.  Global well-posedness for finite energy initial data on the line was first proved in \cite{MR1086966}, by utilizing local smoothing and maximal function estimates.  The paper actually proves local well-posedness in $H^s(\R)$ for $s>3/4$; the global $H^1(\R)$ result follows trivially from this and conservation of the Hamiltonian.

The next conspicuous benchmark for the well-posedness theory was the treatment of initial data with finite momentum
\begin{equation}\label{I P}
P(q) := \int \tfrac12 q(x)^2 \,dx,
\end{equation}
that is, data in $L^2$.  \emph{Momentum} is the appropriate term here; this quantity is the generator of translations with respect to the standard symplectic structure.  Moreover, this quantity is conserved under the KdV flow.  The \emph{mass} of a wave is given by $\int q(x)\,dx$, which is also conserved, and represents the total deficit (or surplus) of water relative to $q\equiv 0$.

Well-posedness of \eqref{KdV} in $L^2$ was proved both on the line and on the circle by Bourgain in \cite{MR1215780}.  At the heart of this work is the use of $X^{s,b}$ spaces, which efficiently capture the dispersive nature of the equation and effectively control the deviation of the KdV dynamics from solutions to the linear equation $\partial_t q = - q'''$.  After developing suitable estimates in these spaces, the proof proceeds by contraction mapping arguments; thus the solutions constructed depend analytically on the initial data.

Further development and refinement of the methods employed in \cite{MR1215780} ultimately led to a proof of local well-posedness for \eqref{KdV} in $H^{s}(\R)$ for $s\geq -3/4$ and in $H^s(\R/\Z)$ for $s\geq -1/2$.  Excepting the endpoints, this was proved by Kenig--Ponce--Vega in \cite{MR1329387}.  For a discussion of the endpoints, see \cite{MR2018661} and \cite{MR1969209,MR2054622,MR2233689}.  These ranges of $s$ are sharp if one requires the data to solution map to be uniformly continuous on bounded sets; see \cite{MR2018661}.

These local well-posedness results were made global in time in \cite{MR1969209}, excepting the endpoint case $H^{-3/4}(\R)$, which was proved later in \cite{MR2531556,MR2501679}.    At that time, no exact conservation laws were known that were adapted to negative regularity.  To obtain such global results, these authors constructed almost conserved quantities, whose growth in time they were able to control.

While it is true that the conspicuous manifestations of complete integrability of KdV played no particular role in the series of works we have just described, it is difficult to completely decouple these successes from the exact structure of the KdV equation.  In the first place, many of these arguments rely on the absence of unfavorable resonances.  This appears in the multilinear $X^{s,b}$ estimates and (rather more explicitly) in the construction of almost conserved quantities in \cite{MR1969209}.  This is akin to the construction of Birkoff normal form, which may fail due to resonances, but which does succeed in completely integrable systems (cf. \cite{MR0501141,MR2150385}).  As we will discuss below, we now know that KdV admits exact conservation laws adapted to every regularity $s\geq -1$; this offers a rather transparent explanation for the otherwise startling success of \cite{MR1969209} in constructing almost conserved quantities.

The Miura map \cite{MR0252825}, implements a first iteration toward the construction of Birkoff normal form by converting the KdV equation to the mKdV equation, which has a nonlinearity that is one degree higher.  This transformation was one of the first indications that there was something peculiar about \eqref{KdV}.  Moreover, a one-parameter generalization of this transformation, due to Gardner, led to the first proof of the existence of infinitely many polynomial conservation laws; see \cite{MR0252826}.  The Miura map has been very popular in the study of KdV at low regularity.  Most particularly, it allows one to work at positive regularity, where many nonlinear transformations (e.g., pointwise products) are much better behaved.

The breakdown of traditional PDE techniques ultimately stems from a high-high-low frequency interaction that makes it impossible to approximate the KdV flow by a linear evolution even locally in time.  This particular frequency interaction appears in many fluid models, due to the ubiquity of the advection nonlinearity $(u\cdot\nabla) u$, and is exploited crucially in the construction of solutions exhibiting energy growth.

It is worth noting that among the family of monomial gKdV equations, namely, those of the form $\partial_t q = -\partial_x^3 q \pm \partial_x (q^k)$, only for the completely integrable models (i.e., $k=2,3$) does the local well-posedness threshold deviate from scaling.  Indeed, the completely integrable models are \emph{less} well-posed relative to scaling than those with $k\geq 4$.  Ultimately, we see that complete integrability does not completely ameliorate the severity of this nonlinearity when acting on solutions of low regularity.

In this vein, we contend that the complete integrability of a system is not divorced from the class of initial data on which it is studied.  The PDE $\partial_t q = \partial_x q$ posed on the line might immediately be classed as completely integrable; it even belongs to the KdV hierarchy.  However, when the initial data is white-noise, we see that the dynamics is mixing!  On the basis of the results of this paper, we may say that the term \emph{completely integrable} continues to apply to \eqref{KdV} in the class $H^s(\R)$ when $s\geq -1$.

We have not yet explained in what sense \eqref{KdV} can be regarded as completely integrable.  The most common definition applied in finite-dimensional mechanics is that the system has sufficiently many Poisson commuting, functionally independent, conserved quantities.  Here, sufficiently many means half the dimension of the ambient symplectic manifold.  As noted earlier, the fact that \eqref{KdV} admits infinitely many independent conserved quantities was first proved in \cite{MR0252826}.  We have already seen three: the mass, momentum, and energy.  In the original paper, the conservation laws were presented in a microscopic form, that is, as
\begin{equation}\label{micro law}
\partial_t \rho(t,x) + \partial_x j(t,x) = 0,
\end{equation}
where the densities $\rho$ and the currents $j$ are given by particular polynomials in $q$ and its derivatives.  The (macroscopic) conserved quantities are then obtained integrating $\rho$ over the whole line or circle, as appropriate.

The polynomial nature of these conserved quantities is such that, except in the case of $H^\infty$ data (i.e. all derivatives square integrable), all but finitely many of them are infinite.  Moreover, it is also not immediately clear whether these constitute a sufficient number of conserved quantities to call the system completely integrable, even in Schwartz space.  These concerns turn out to be unwarranted.  To explain, we begin with an innovation of Lax \cite{MR0235310}, namely, the introduction of the Lax pair:  Defining
\begin{align*}
L(t) &:= -\partial_x^2 + q(t,x) \qtq{and} P(t) := - 4 \partial_x^3 + 3\bigl(\partial_x q(t,x) + q(t,x) \partial_x \bigr)
\end{align*}
it is easy to verify that
\begin{align*}
\text{$q(t)$ solves \eqref{KdV}} \iff \ddt L(t) = [P(t),\, L(t)].
\end{align*}
As $P(t)$ is always anti-self-adjoint, this shows that at each time slice, the Schr\"odinger operator with potential $q(t,x)$ is unitarily equivalent to that built from the initial data $q(0,x)$.  Speaking loosely, we may say that all spectral properties of $L(t)$ are conserved under the KdV flow.

One of the beauties of the Lax pair is that it works equally well in both geometries.  However, once we try to speak more precisely about which spectral properties are conserved, this unity quickly dissolves.  We will first discuss the periodic case where related ideas have been most successful in tackling the well-posedness problem.

The Schr\"odinger operator on the circle with (periodic) potential $q$ has purely discrete spectrum.  This remains true for potentials that are merely $H^{-1}$ because such perturbations of $-\partial_x^2$ are relatively compact. The Lax pair shows that these (periodic) eigenvalues are then conserved under the flow and so we obtain an infinite sequence of conserved quantities that extend to the case of very low regularity.  There is a direct connection between these eigenvalues and the polynomial conservation laws mentioned earlier; see, for example, \cite[\S3]{MR0397076}.

As it turns out, these eigenvalues are not the most convenient objects for further development.  Rather, one should consider the spectrum of the Schr\"odinger operator associated to the $1$-periodic potential, acting on the whole line.  This set is wholly determined by the periodic eigenvalues; see \cite{MR0749109}.  Nevertheless, this new perspective suggests an alternate set of conserved quantities, namely, the lengths of the gaps in the spectrum.  The virtue of these new quantities can be seen already in the fact that these numbers effectively capture the $H^s$ norms of the potential, at least if $s\geq0$; see \cite{MR0409965}.  While such a priori bounds are useful for well-posedness questions (particularly, to extend solutions globally in time), they do not suffice.

For the purposes of well-posedness, there is no better expression of complete integrability than the existence of action-angle coordinates.  Such coordinates are now known to exist for \eqref{KdV} with data in $H^{-1}(\R/\Z)$ and this result was decisive in the proof of global well-posedness in \cite{MR2267286}.  A key step down this path was the discovery that one should adopt the Dirichlet spectrum (together with the gap lengths) to form a complete set of coordinates and secondly, that these points (which lie in the gaps) should properly be interpreted as lying on the Riemann surface obtained by gluing together two copies of the complex plane cut along the spectrum.  These considerations lead to the definition of angle variables (cf. \cite{MR0427869,MR0427731}) and thence to associated actions \cite{MR0403368}.  A very pedagogical account of these constructions can be found in \cite{MR1997070}; moreover, this monograph culminates in a proof that these variables define \emph{global} action-angle coordinates on each symplectic leaf in the phase-space $L^2(\R/\Z)$.

The proof in \cite{MR2267286} of global well-posedness in $H^{-1}(\R/\Z)$ required two more steps.  The first, carried out in \cite{MR2179653}, was the extension of these coordinates (as a global analytic symplectic diffeomorphism) to $H^{-1}(\R/\Z)$.  The second was to gain adequate control of the frequencies (i.e., the time derivatives of the angles).  Usually, these frequencies are computed as the derivatives of the Hamiltonian with respect to the corresponding actions.  However, the Hamiltonian $H_\text{KdV}$ does not make sense as a function on $H^{-1}(\R/\Z)$!

Let us now turn our attention to the case of \eqref{KdV} posed on the line.  We begin by describing a system of coordinates discovered already in \cite{GGKM} that linearize the flow (at least for a suitable class of data).  While not action-angle variables themselves, such variables can be readily expressed in terms of them; see \cite{MR0303132}.

To the best of our knowledge, the broadest class in which the construction that follows has been successfully completed is $\mathcal L^1_1:= \{q\in L^1(\R): x q(x) \in L^1(\R)\}$; see \cite{MR0897106,MR0792566}.  As we will discuss later, there are compelling reasons to doubt that this construction can be taken much further without substantial new ideas.

Given $q\in \mathcal L^1_1$ and $k\in\C$ with $\Im k\geq 0$, there are unique solutions $f_\pm(x;k)$ to
$$
-f''(x) + q(x)f(x) = k^2 f(x) \qtq{satisfying} f_\pm(x) = e^{\pm ik x} +o(1) \quad\text{as $x\to\pm\infty$.}
$$
These are known as Jost solutions and depend analytically on $k$.  For $k\in\R\setminus\{0\}$, $f_+(x;\pm k)$ are linearly independent solutions to our ODE.  Thus we may define connection coefficients, say $a(k)$ and $b(k)$, so that
\begin{equation}\label{a and b}
f_-(x;k) = a(k) f_+(x;-k) + b(k) f_+(x;k).
\end{equation}
Note that $a(k)$ extends analytically to the upper half-plane, since it can be expressed through the Wronskian of $f_+$ and $f_-$.  There is no such extension of $b(k)$.  The relation to the Wronskian also shows that $a(k)$ has zeros in the upper half-plane precisely at those points $i\kappa_n$ for which $-\kappa_n^2$ is an eigenvalue of the Schr\"odinger operator.

The objects introduced so far do not uniquely characterize the potential $q$.  To do so, one must also consider \emph{norming constants}, $c_n>0$, associated to each eigenvalue $-\kappa_n^2$.  These describe the large-$x$ asymptotics of the $L^2$-normalized eigenfunction $\psi_n(x)$; specifically, $e^{\kappa_n x} |\psi(x)| \to c_n$ as $x\to+\infty$.

As shown already in \cite{GGKM}, the objects just described evolve very simply under \eqref{KdV}: $a(k)$, $|b(k)|$, and the eigenvalues remain constant, while $\arg( b(k))$ and $\log(c_n)$ evolve linearly.  As the forward and inverse scattering problems have been shown to be well-posed in the class $\mathcal L^1_1$, this yields a proof of well-posedness of \eqref{KdV} in this class.  This is the natural analogue of the argument that has proven so successful in the circle geometry.

Unfortunately, well-posedness of the forward/inverse scattering problems (as they are currently understood) begins to break down under very mild relaxations of the condition $q\in \mathcal L^1_1$.  For example, the scattering data fails to determine $q$ already for potentials that are bounded and $O(x^{-2})$ at infinity, due to the presence of zero-energy eigenvalues; see \cite{MR0875319}.  Relaxing our decay restrictions on $q$ to merely $O(|x|^{-1})$ at infinity, gives rise to further problems: positive energy (embedded) eigenvalues may occur (cf. \cite[\S XIII.13]{MR0493421}); moreover, Jost solutions may fail to exist (without WKB correction) at every positive energy.

In \cite{MR2138138}, it is shown that embedded singular continuous spectrum can occur as soon as one passes beyond $O(|x|^{-1})$ decay, even in the slightest.  Moreover, potentials $q\in L^2(\R)$ can yield essentially arbitrary embedded singular spectrum; see \cite{MR2552106}.  The appearance of such exotic spectra leads us to believe that seeking a solution to the well-posedness problem for \eqref{KdV} in $H^{-1}(\R)$ through the inverse scattering methodology has little chance of success at this time.  In particular, we are not aware of any proposal for action-angle variables in such a scenario.  This raises the following question: What other manifestation of complete integrability may hold the key to further progress on the well-posedness problem?

Our answer, in this paper, is the existence of a wealth of commuting flows.  As we will see, the method we propose does not completely supplant PDE techniques, but rather, like the Miura map, provides a new avenue for their application to the KdV problem.  As the existence of an abundance of commuting flows is a necessary (but not sufficient) condition for a system to be considered completely integrable, the method has a good chance of being applicable to any PDE that is considered completely integrable.

The commuting flows associated to the traditional sequence of conserved quantities (based on polynomials in $q$ and its derivatives) are not what we have in mind.  Their well-posedness is at least as difficult as for \eqref{KdV} itself.  Moreover, there is no sense in which they approximate the KdV flow; they are better considered as flowing in orthogonal directions.  Rather, we begin our discussion with
\begin{equation}\label{Intro renorm}
\alpha(\kappa;q) := - \log[a(i\kappa;q)] + \tfrac{1}{2\kappa}\int q(x)\,dx,
\end{equation}
where $a(k;q)$ denotes the coefficient $a(k)$ from \eqref{a and b} associated to the potential $q$.  As noted previously, both $a(k;q)$ and $\int q$ are conserved under the KdV flow; thus one should expect $\alpha(\kappa)$ to also be conserved whenever it is defined.

Unaware that the same idea had already been implemented by Rybkin in \cite{MR2683250}, the authors together with X.~Zhang showed in \cite{KVZ} that $\alpha(\kappa;q)$ is a real-analytic function of $q\in H^{-1}(\R)$, provided $\kappa\geq 1 + 45\|q\|_{H^{-1}}^2$.  We also gave a direct proof that it is conserved for Schwartz initial data.   In these arguments, both we and Rybkin use the fact that $a(k;q)$ can also be written as a Fredholm determinant; see \eqref{O37} below.  That such a determinant representation of this scattering coefficient is possible was first noticed in the setting of three-dimensional central potentials in \cite{MR0044404}.  (See \cite[Proposition~5.4]{MR2154153} or \cite[Lemma~2.8]{MR2310217} for simple proofs in one dimension.)

The renormalization of $\log|a(k)|$ appearing in \eqref{Intro renorm} is essential for considering $q\in H^{-1}(\R)$; without it, one would need to restrict to potentials that are at least conditionally integrable.  Incidentally, this renormalizing term can also be predicted as the leading behaviour of the phase shift via WKB theory.

The goal of the paper \cite{KVZ} was the construction of a variety of low-regularity conservation laws for KdV both on the line and on the circle. (NLS and complex mKdV were also treated there by the same method.) In the case of $H^{-1}(\R)$ bounds for KdV, our argument is essentially that of Rybkin \cite{MR2683250}, who obtained the same result.  Another proof (also independent of Rybkin) can be found in \cite{MR3400442}.  In the line setting, general $H^s(\R)$ bounds for KdV, NLS, and mKdV were obtained, independently, by Koch and Tataru \cite{KT}.  For an earlier partial result, see also \cite{MR3292346}.  In the circle setting, bounds of this type were obtained considerably earlier; see \cite{MR2179653}.

In this paper, we will not rely on the results of \cite{MR3400442,MR2179653,KVZ,KT,MR2683250}. In fact, the proof of Theorem~\ref{T:ls conv} below relies on our development of an alternate argument, which also yields the global $H^{-1}$ bound.  Specifically, we will develop a microscopic version (cf. \eqref{micro law}) of the macroscopic conservation law from \cite{KVZ,MR2683250}.

A priori bounds of the type just described do not in themselves yield well-posedness.  Indeed, conservation of momentum was known already to Korteweg and de~Vries, yet the corresponding well-posedness result did not appear until \cite{MR1215780}.  The key obstacle is always to control differences of solutions.  While individual solutions admit infinitely many conservation laws, the difference of two solutions need not have any.

As discussed previously, the map $q\mapsto \alpha(\kappa;q)$ is analytic; therefore, its derivative (with respect to $q$) is represented (in the sense \eqref{derivative}) by an analytic $H^1$-valued function of $q\in H^{-1}$.  Thus, when we consider the Hamiltonian evolution induced by this functional, namely,
$$
\frac{d\ }{dt} q(t) = \partial_x \frac{\delta \alpha}{\delta q},
$$
we see that the right-hand side is a Lipschitz function on $H^{-1}$ and so well-posedness of this equation follows by the standard ODE argument.  Our ambition (and this appears to be a new idea) is to approximate \eqref{KdV} by this flow.  It turns out that this is possible after one further renormalization, as we will now explain.

It was observed already in \cite{MR0303132} that $\log[a(i\kappa)]$ acts as a generating function for the polynomial conserved quantities; in particular, this yields the asymptotic expansion
$$
\alpha(\kappa;q) = \tfrac{1}{4\kappa^3} P(q) - \tfrac{1}{16\kappa^5} H_\text{KdV}(q) + O(\kappa^{-7}),
$$
using the notations \eqref{I HKdV} and \eqref{I P}.  Inspired by this, one may then postulate that the Hamiltonian
$$
H_\kappa := - 16 \kappa^5 \alpha(\kappa;q) + 4 \kappa^2 P(q)
$$
provides a good approximation to the KdV Hamiltonian for $\kappa$ large.  More ambitiously, one may hope that the KdV flow is well approximated by the flow under $H_\kappa$.  Verifying this and so deducing Theorem~\ref{T:main} occupies the central portion of the paper, namely, Sections~3--5.

Several observations are in order.  Firstly, while $\alpha(\kappa,q)$ is a analytic function on $H^{-1}$, the approximate Hamiltonian $H_\kappa$ is not, because momentum is not.  Nevertheless, well-posedness of the resulting flow is still elementary; see Proposition~\ref{P:H kappa}.

The problem of estimating the discrepancy between the $H_\kappa$ flow and the full KdV flow is much simplified by the fact that the two flows commute.  Indeed, it reduces the question of such an approximation to showing that the flow induced by the difference $H_\text{KdV} - H_\kappa$ is close to the identity for $\kappa$ large and bounded time intervals.

Naturally, one needs to show that this flow is close to the identity in the $H^{-1}$ metric; however, this follows from proximity in much weaker norms, say $H^{-3}$.  The central point here is equicontinuity, or what is equivalent, tightness on the Fourier side; see Lemma~\ref{L:equi 1}.  The equicontinuity of orbits under the flows of interest to us follows from the fact that all conserve $\alpha(\kappa;q)$; see Lemma~\ref{L:equi 2}.  Indeed, from \eqref{alpha as I2}, we see that this functional effectively captures how much of the $H^{-1}$ norm of $q$ lives at frequencies $\xi$ with $|\xi|\gtrsim \kappa$.

One further innovation informs our implementation of the program laid out above, namely, the adoption of the `good unknown' $x\mapsto\kappa - \tfrac{1}{2g(x)}$.   Here $g(x):=g(x;\kappa,q)$ denotes the diagonal of the Green's function associated to the potential $q$ at energy $-\kappa^2$.  For a discussion of this object, see Section~\ref{S:2}.  In particular, it is shown there that the map from $q(x)$ to $\kappa - \tfrac{1}{2g(x)}$ is a real-analytic diffeomorphism, thus, justifying the notion that $g(x)$ may effectively replace the traditional unknown $q(x)$.

Both the diagonal Green's function and its reciprocal appear naturally in several places in our argument, including in the conserved density $\rho$ introduced in \eqref{E:rho defn} and in the dynamics associated to the Hamiltonian $H_\kappa$; see \eqref{H kappa flow q}.   Although our embracement of $g(x)$ is certainly responsible for the simplicity of many of our estimates and concomitantly, for the brevity of the paper, we caution the reader that it is not in itself the key to overcoming the fundamental obstacle confounding previous investigators, namely, the problem of estimating differences between two solutions.

We are not aware of any obstruction to extending the method employed here to a wide range of integrable systems, including those in the AKNS family.  As evidence in favour of this assertion, we demonstrate in Section~\ref{S:periodic} how our method applies in the setting of KdV on the circle.  In Appendix~\ref{S:A} we apply it to the next equation in the KdV hierarchy, following up on an enquiry of a referee. Regarding models in the AKNS family, we note that the functional $\alpha(\kappa,q)$ discussed in \cite{KVZ} is easily seen to have several of the favorable properties needed for our arguments, such as providing global norm control, yielding equicontinuity, and inducing a well-posed Hamiltonian flow.

We do not consider what our results may imply for (real) mKdV via the Miura map.  Rather, it is our hope that our method may soon be adapted
to give an \emph{intrinsic} treatment of the more general \emph{complex} mKdV, which fits within the AKNS family of integrable systems.

Finally, while the ideas presented here are rooted in the complete integrability of KdV, we believe they may prove fruitful beyond this realm.  Specifically, we envision the $H_\kappa$ flow being used as a leading approximation for KdV-like equations in much the same way as the Airy equation, $\partial_t q = -q'''$, has been used as an approximation of KdV itself.

\subsection{Local smoothing}\label{SS:ls}
The local smoothing effect is observed for a wide range of dispersive equations in Euclidean space, both linear and nonlinear.  The underlying physical principle is that when high-frequency components of a wave travel very quickly, they must spend little time in any fixed finite region of space.  Thus, one should expect a gain in regularity locally in space on average in time.  This phenomenon seems to have been first appreciated by Kato, both for linear \cite{MR0190801,MR0234314} and nonlinear \cite{MR0759907} problems.   In \cite{MR0759907}, it is shown that for Schwartz solutions to \eqref{KdV} one has
$$
\int_{-1}^1 \int_{-1}^1 |q'(t,x)|^2 \,dx\,dt \lesssim \| q(0) \|_{L^2}^2 +  \| q(0) \|_{L^2}^6.
$$
This is then used to prove the existence of global weak solutions to \eqref{KdV} for initial data in $L^2(\R)$.  Prior to this, existence of global weak solutions (in either geometry) was known only for data in $H^1$; see \cite{MR0261183}.

In \cite{MR3400442}, Buckmaster and Koch proved the existence of an analogous a priori local-smoothing estimate one degree lower in regularity (on both sides).  This is achieved by using a Miura-type map and adapting Kato's local smoothing estimate for mKdV to the presence of a kink.  This technology is then used to prove the existence of global weak/distributional solutions to \eqref{KdV} with initial data in $H^{-1}(\R)$.  (The nonlinearity may now be interpreted distributionally, because local smoothing guarantees that $q(t,x)$ is locally square integrable in space-time.)  As is usual with the construction of weak solutions, the arguments do not yield uniqueness and continuity in time is only shown with respect to the weak topology.  (Continuous dependence on the initial data is hopeless without first knowing uniqueness.)  For a restricted class of $H^{-1}$ initial data (namely, that in the range of the traditional Miura map), the existence of weak solutions was shown earlier in \cite{MR2189502}; see also \cite{MR0990865}.

In Section~\ref{S:7} we will give a new derivation of the a priori local smoothing bound of \cite{MR3400442}.  Our argument is based on the discovery of a new microscopic conservation law \eqref{E:l5.1h} adapted to regularity $H^{-1}(\R)$, which is then integrated against a suitably chosen weight function.  It is not difficult to extend the a priori bound to the full class of solutions constructed in Theorem~\ref{T:main}.  However, we are able to take the argument one step further and show the following (cf. Proposition~\ref{P:loc smoothing}):

\begin{theorem}\label{T:ls conv} Let $q$ and $\{q_n:n\in\N\}$ be solutions to \eqref{KdV} on the line in the sense of Theorem~\ref{T:main}.  If the initial data obey $q_n(0)\to q(0)$ in $H^{-1}(\R)$, then
\begin{equation}
\iint_K \bigl| q(t,x) - q_n(t,x)\bigr|^2\,dx\,dt \to 0\qtq{as} n\to\infty
\end{equation}
for every compact set $K\subset \R\times\R$.
\end{theorem}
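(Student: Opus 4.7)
The plan is to invoke the Hilbert-space identity
$$
\|q_n - q\|_{L^2(K)}^2 = \|q_n\|_{L^2(K)}^2 - 2\,\langle q_n, q\rangle_{L^2(K)} + \|q\|_{L^2(K)}^2,
$$
so that the assertion of the theorem reduces to proving (i) weak $L^2(K)$ convergence of $q_n$ to $q$ and (ii) convergence of the $L^2(K)$ norms.  Both ingredients will be extracted by testing the microscopic conservation law \eqref{E:l5.1h} against a suitable spatial weight.

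First, I would re-establish the a priori local smoothing bound.  Taking the microscopic identity $\partial_t \rho(\kappa;q) + \partial_x j(\kappa;q) = 0$, multiplying by a non-decreasing bounded weight $\phi(x)$ whose derivative $\phi'$ is a prescribed non-negative bump, and integrating over $[0,T]\times\R$ produces, after isolating the leading $q^2$-contribution of $j$ tested against $\phi'$, a balance of the form
$$
\int_0^T\!\!\int q(t,x)^2\,\phi'(x)\,dx\,dt = F\bigl(\kappa, T, q(0), q(T)\bigr),
$$
where $F$ is a finite combination of spatial integrals of $\rho(\kappa;q)$ and of lower-order quantities built from the diagonal Green's function $g(x;\kappa,q)$.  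The magnitude of $F$ is controlled by $\|q(0)\|_{H^{-1}}$, $T$, and $\phi$, via the connection between $\rho$ and $\alpha(\kappa;q)$ and the equicontinuity afforded by Theorem~\ref{T:main}.  Applied to $q$ and to each $q_n$, this yields a uniform $L^2_\text{loc}(\R\times\R)$ bound on the family $\{q, q_n\}$.  Combined with the distributional convergence of $q_n \to q$ in $\R\times\R$ (which follows from $C_t H^{-1}_x$ convergence on compact time intervals, itself a consequence of Theorem~\ref{T:main}), this delivers ingredient (i).

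For (ii), I would observe that every summand of $F$ is a functional of $\rho$ and $g$, both of which depend continuously on $q\in H^{-1}$ once $\kappa$ is chosen large: indeed $q \mapsto \kappa - \tfrac{1}{2g(x;\kappa,q)}$ is a real-analytic diffeomorphism of $H^{-1}$ for $\kappa$ large (Section~\ref{S:2}), while $\rho$ is tied to the real-analytic functional $\alpha(\kappa;q)$.  Since $q_n(0)\to q(0)$ and, by Theorem~\ref{T:main}, $q_n(T) \to q(T)$ in $H^{-1}$, we obtain $F(\kappa, T, q_n(0), q_n(T)) \to F(\kappa, T, q(0), q(T))$, and consequently the norm convergence $\iint q_n^2\,\phi'\,dx\,dt \to \iint q^2\,\phi'\,dx\,dt$.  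Choosing $\phi'$ and $[0,T]$ so that the integration region covers an arbitrary compact $K$ (and then applying a monotone-class argument to replace $\phi'\,\mathbf 1_{[0,T]}$ by the indicator of $K$) closes the proof via the Hilbert-space identity.

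The principal obstacle is the identification and manipulation of the microscopic conservation law \eqref{E:l5.1h} itself:  one must exhibit a density--current pair $(\rho, j)$ for which (a) $\rho$ is a well-defined element of $L^1_\text{loc}$ continuous in $q\in H^{-1}$, (b) the current $j$ contains an extractable positive $q^2$-contribution when tested against a spatial bump, with constant independent of (or controllable in) $\kappa$, and (c) all remaining terms in $j$ are $H^{-1}$-continuous in $q$.  These three requirements are exactly what the new microscopic conservation law is designed to meet via the good unknown $\kappa - \tfrac{1}{2g(x)}$; once in hand, the argument above proceeds essentially mechanically.
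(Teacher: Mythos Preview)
Your strategy is sound and in fact yields a more economical proof than the paper's.  The paper proceeds differently: after the a priori bound (which you also derive), it establishes a refined $\kappa$-dependent version of the balance identity showing that the high-frequency component $|\xi|(\xi^2+4\kappa^2)^{-1/2}\widehat{q\psi}$ vanishes in $L^2_{t,x}$ as $\kappa\to\infty$, uniformly over equicontinuous families (this is their Lemma~\ref{L:kappa ls}, which in turn rests on the somewhat delicate estimates \eqref{E:psi g}--\eqref{E:psi dumb}).  The result then follows from a high--low frequency splitting: low frequencies converge by $C_tH^{-1}_x$ convergence, high frequencies are uniformly small.  Your Radon--Riesz argument at a single fixed $\kappa$ sidesteps all of this: once the right-hand side of \eqref{64} is seen to be continuous in $q\in C_tH^{-1}_x$, norm convergence of $\int\!\!\int q_n^2\,\psi$ follows directly, and weak-plus-norm gives strong.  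What the paper's route buys is the extra information that the local-$L^2$ norm of $q$ is tight in frequency uniformly over the orbit, which is of independent interest but not needed for the theorem.

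Two points of your write-up need correcting.  First, your $F$ does \emph{not} depend only on $q(0)$ and $q(T)$: the second and third lines of \eqref{64} are integrated over the whole time interval and so depend on the full trajectory $t\mapsto q(t)$.  This is harmless, since Theorem~\ref{T:main} gives $q_n\to q$ in $C([{-T},T];H^{-1})$, and each time-slice integrand is continuous in $q(t)\in H^{-1}$ (the term $\int q[\,\kappa-\tfrac{1}{2g}\,]\phi'$ is an $H^{-1}$--$H^1$ pairing, continuous in both slots; the $\rho$ terms use the analyticity $q\mapsto\rho$ into $L^1$ from Proposition~\ref{P:Intro rho}); dominated convergence then gives $F(q_n)\to F(q)$.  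Second, no monotone-class argument is needed: the weight $\psi(x)=\tfrac32\int e^{-2|x-y|}\phi'(y)\,dy$ (which is what actually emerges, not $\phi'$ itself) is strictly positive everywhere, so strong convergence in $L^2(\psi\,dx\,dt)$ on $[{-T},T]\times\R$ immediately gives strong convergence in $L^2(K)$ for any compact $K\subset[{-T},T]\times\R$.
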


It follows immediately from this result that the solutions we construct are indeed distributional solutions in the line case.

\subsection*{Acknowledgements} R. K. was supported, in part, by NSF grant DMS-1600942 and M. V. by grant DMS-1500707.  We would also like to thank the referee, whose comments and questions led to the inclusion of Appendix~\ref{S:A}.

\subsection{Notation and Preliminaries}\label{S:1.1}
Many of the functions considered in this paper have numerous arguments.  For example, the diagonal Green's function ultimately depends on the location in space $x$, an energy parameter $\kappa$, and the wave profile $q$, which itself depends on time.  We find it advantageous to readability to suppress some of these dependencies from time to time.

We use prime solely to indicate derivatives in $x$; thus $f' =\partial_x f$.

Our conventions for the Fourier transform are as follows:
\begin{align*}
\hat f(\xi) = \tfrac{1}{\sqrt{2\pi}} \int_\R e^{-i\xi x} f(x)\,dx  \qtq{so} f(x) = \tfrac{1}{\sqrt{2\pi}} \int_\R e^{i\xi x} \hat f(\xi)\,d\xi
\end{align*}
for functions on the line and
\begin{align*}
\hat f(\xi) = \int_0^1 e^{- i\xi x} f(x)\,dx \qtq{so} f(x) = \sum_{\xi\in 2\pi\Z} \hat f(\xi) e^{i\xi x}
\end{align*}
for functions on the circle $\R/\Z$.  Concomitant with this, we define
\begin{align*}
\| f\|_{H^{s}(\R)}^2 = \int_\R |\hat f(\xi)|^2 (4+|\xi|^2)^s \,d\xi
\qtq{and}
\| f\|_{H^{s}(\R/\Z)}^2 = \sum_{\xi\in 2\pi\Z} (4+\xi^2)^s |\hat f(\xi)|^2 .
\end{align*}
The use of the number $4$ here rather than the more traditional $1$ has no meaningful effect on these Hilbert spaces (the norms are equivalent); however, this definition simplifies our exposition by making certain key relations exact identities.  More generally, we define
\begin{align*}
\| f\|_{H^{s}_\kappa(\R)}^2 = \int_\R |\hat f(\xi)|^2 (4\kappa^2+|\xi|^2)^s \,d\xi
\qtq{and}
\| f\|_{H^{s}_\kappa(\R/\Z)}^2 = \sum_{\xi\in 2\pi\Z} (4\kappa^2+\xi^2)^s |\hat f(\xi)|^2 .
\end{align*}

Note that $H^{1}_\kappa$ is an algebra in either geometry.  Indeed, one readily sees that
$$
\| f g \|_{H^{1}_\kappa} \lesssim \| f \|_{H^{1}} \| g \|_{H^{1}_\kappa} \leq \| f \|_{H^{1}_\kappa} \| g \|_{H^{1}_\kappa}  \quad\text{uniformly for $\kappa\geq 1$.}
$$
By duality, this implies that
$$
\| f h \|_{H^{-1}_\kappa} \lesssim \| f \|_{H^{\vphantom{+}1}_{\vphantom{\kappa}}} \| h \|_{H^{-1}_\kappa}  \quad\text{uniformly for $\kappa\geq 1$.}
$$

Throughout the paper, we will employ the $L^2$ pairing.  This informs our identification of $H^{-1}$ and $H^1$ as dual spaces and our notation for functional derivatives:
\begin{equation}\label{derivative}
\frac{d\ }{ds}\biggr|_{s=0} F(q+sf) = dF\bigl|_q (f) = \int \frac{\delta F}{\delta q}(x) f(x)\,dx .
\end{equation}

We write $\I_p$ for the Schatten class of compact operators whose singular values are $\ell^p$ summable.  In truth, we shall use the Hilbert--Schmidt class $\I_2$ almost exclusively. When we do use $\I_1$, it will only be as a notation for products of Hilbert--Schmidt operators; see \eqref{I1 from I2}.  Let us quickly recall several facts about the class $\I_2$ that we will use repeatedly:  An operator $A$ on $L^2(\R)$ is Hilbert--Schmidt  if and only if it admits an integral kernel $a(x,y)\in L^2(\R\times\R)$; moreover,
\begin{align*}
\| A \|_{L^2\to L^2} \leq \| A \|_{\I_2} = \iint |a(x,y)|^2\,dx\,dy.
\end{align*}
The product of two Hilbert--Schmidt operators is trace class; moreover,
\begin{align*}
\tr(AB) := \iint a(x,y)b(y,x)\,dy\,dx = \tr(BA) \qtq{and}   |\tr(AB)| \leq \| A \|_{\I_2} \| B \|_{\I_2}.
\end{align*}
Lastly, Hilbert--Schmidt operators form a two-sided ideal in the algebra of bounded operators; indeed,
\begin{align*}
\| B A C \| \leq \| B \|_{L^2\to L^2} \| A \|_{\I_2}\| C \|_{L^2\to L^2}.
\end{align*}
All of this (and much more) is explained very clearly in \cite{MR2154153}.

For the arguments presented here, the problem \eqref{KdV} posed on circle is more favorably interpreted as a problem on the whole line with periodic initial data.  Correspondingly, even in this case, we will be dealing primarily with operators on the whole line, albeit with periodic coefficients.  When we do need to discuss operators acting on the circle $\R/\Z$ in connection with prior work, these will be distinguished by the use of calligraphic font.

\section{Diagonal Green's function}\label{S:2}

The goal of this section is to discuss the Green's function $G(x,y)$ associated to the whole-line Schr\"odinger operator
$$
L := -\partial_x^2 + q
$$
for potentials
\begin{align}\label{B delta}
q \in B_\delta := \{ q \in H^{-1}(\R) : \| q \|_{H^{-1}(\R)} \leq \delta\}
\end{align}
and $\delta$ small.  Particular attention will be paid to the diagonal $g(x):=G(x,x)$ and its reciprocal $1/g(x)$; the latter appears in the energy density associated to the key microscopic conservation law for KdV.

Let us briefly recall one key fact associated to the Schr\"odinger operator with $q\equiv 0$: The resolvent
\begin{align}\label{R resolvent}
R_0(\kappa) = (-\partial^2_x + \kappa^2)^{-1} \qtq{has integral kernel} G_0(x,y;\kappa) = \tfrac{1}{2\kappa} e^{-\kappa|x-y|}
\end{align}
for all $\kappa>0$.

\begin{prop}\label{P:sa L}
Given $q\in H^{-1}(\R)$, there is a unique self-adjoint operator $L$ associated to the quadratic form
$$
\psi \mapsto \int |\psi'(x)|^2 + q(x) |\psi(x)|^2\,dx \qtq{with domain} H^1(\R).
$$
It is semi-bounded.  Moreover, for $\delta\leq \frac12$ and $q\in B_\delta$, the resolvent is given by the norm-convergent series
\begin{align}\label{E:R series}
R := (L+\kappa^2)^{-1} = \sum_{\ell=0}^\infty (-1)^\ell \sqrt{R_0} \Bigl( \sqrt{R_0}\,q\,\sqrt{R_0}\Bigr)^\ell \sqrt{R_0}
\end{align}
for all $\kappa\geq 1$.
\end{prop}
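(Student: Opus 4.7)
The plan is to construct $L$ via the theory of closed semi-bounded quadratic forms on $H^1(\R)$, and then establish the Born series representation \eqref{E:R series} by showing that $T := \sqrt{R_0}\,q\,\sqrt{R_0}$ has small operator norm on $L^2$. To give meaning to the potential contribution $\int q|\psi|^2\,dx$ in the form, I interpret it as the $H^{-1}_\kappa/H^1_\kappa$ duality pairing of $q$ with $|\psi|^2$; combined with the multiplication estimate $\|q\psi\|_{H^{-1}_\kappa} \lesssim \|\psi\|_{H^1}\|q\|_{H^{-1}_\kappa}$ from the preliminaries and the trivial bound $\|\psi\|_{H^1} \leq \|\psi\|_{H^1_\kappa}$ for $\kappa\geq 1$, this yields
$$
\Bigl|\int q|\psi|^2\,dx\Bigr| \lesssim \|q\|_{H^{-1}_\kappa}\bigl(\|\psi'\|_{L^2}^2 + 4\kappa^2\|\psi\|_{L^2}^2\bigr).
$$
Since $\|q\|_{H^{-1}_\kappa}^2 = \int (4\kappa^2+\xi^2)^{-1}|\hat q(\xi)|^2\,d\xi \to 0$ as $\kappa\to\infty$ by dominated convergence, fixing $\kappa = \kappa(q)$ large enough absorbs the gradient term. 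This gives both semi-boundedness and equivalence of the form norm with $\|\cdot\|_{H^1}$, and the representation theorem for closed semi-bounded forms then supplies the unique self-adjoint $L$ with form domain $H^1(\R)$.

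For the Born series, the essential operator-norm bound is obtained by tracking $T$ through the mapping chain $L^2 \to H^1_\kappa \to H^{-1}_\kappa \to L^2$. Direct Fourier analysis gives $\|\sqrt{R_0}\|_{L^2\to H^1_\kappa} \leq 2$ (since the multiplier $(4\kappa^2+\xi^2)^{1/2}(\kappa^2+\xi^2)^{-1/2}$ is bounded by $2$ for $\kappa\geq 1$) and, by duality, $\|\sqrt{R_0}\|_{H^{-1}_\kappa\to L^2} \leq 2$, while the middle factor is handled by the multiplication estimate $\|q\,\cdot\,\|_{H^1_\kappa \to H^{-1}_\kappa} \lesssim \|q\|_{H^{-1}_\kappa}$. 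Composing gives $\|T\|_{L^2\to L^2} \leq C\|q\|_{H^{-1}_\kappa} \leq C\delta$, and with the implicit constants in the multiplication estimate tracked so that the hypothesis $\delta \leq \tfrac{1}{2}$ makes $C\delta < 1$, the Neumann series $\sum_\ell(-T)^\ell$ converges in operator norm; sandwiching with $\sqrt{R_0}$ then realizes \eqref{E:R series} as a bounded operator on $L^2(\R)$.

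The last step is to identify this operator with $(L+\kappa^2)^{-1}$. I would verify that, for $f\in L^2$, the vector $\psi := \sqrt{R_0}\sum_\ell(-T)^\ell \sqrt{R_0}\,f$ lies in $H^1(\R)$ and satisfies the form identity $\langle\varphi', \psi'\rangle + \langle q\varphi, \psi\rangle + \kappa^2\langle \varphi, \psi\rangle = \langle \varphi, f\rangle$ for every $\varphi\in H^1(\R)$; the right-hand side is recovered term by term using $(-\partial_x^2+\kappa^2)R_0 = I$ together with the telescoping cancellation of adjacent $\sqrt{R_0}\,q\,\sqrt{R_0}$ factors. The principal obstacle throughout is that $q$ is only a distribution, so products such as $q\sqrt{R_0}f$ or $q|\psi|^2$ have no pointwise meaning; every manipulation must instead be realized through the duality $H^{-1}_\kappa \leftrightarrow H^1_\kappa$, and the entire argument depends decisively on the multiplication estimate on this scale.
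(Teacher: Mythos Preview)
Your argument is correct in outline and close in spirit to the paper's, but the key estimate is obtained differently. You bound the operator norm of $T=\sqrt{R_0}\,q\,\sqrt{R_0}$ by factoring through $L^2\to H^1_\kappa\to H^{-1}_\kappa\to L^2$ and invoking the multiplication estimate $\|qf\|_{H^{-1}_\kappa}\lesssim\|f\|_{H^1}\|q\|_{H^{-1}_\kappa}$ from the preliminaries. The paper instead computes the Hilbert--Schmidt norm exactly:
\[
\|T\|_{\op}^2 \le \|T\|_{\I_2}^2 = \tfrac{1}{\kappa}\int \tfrac{|\hat q(\xi)|^2}{\xi^2+4\kappa^2}\,d\xi \le \|q\|_{H^{-1}}^2 \le \delta^2,
\]
by directly integrating the explicit kernel $G_0(x,y;\kappa)=\tfrac{1}{2\kappa}e^{-\kappa|x-y|}$.

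What this buys the paper is twofold. First, the constant is exact, so $\delta\le\tfrac12$ immediately gives $\|T\|_{\op}<1$; your route carries the unspecified implicit constant from the multiplication lemma together with a factor of $4$ from the $\sqrt{R_0}$ bounds, so you cannot honestly claim the specific threshold $\delta\le\tfrac12$ without going back and tracking that constant (you flag this yourself). Second, and more importantly for the paper as a whole, the Hilbert--Schmidt identity \eqref{R I2} is the workhorse estimate used repeatedly afterwards (Propositions~\ref{P:diffeo}, \ref{P:Intro rho}, and throughout Section~\ref{S:7}); establishing it here is not incidental. Your approach, while sufficient for this proposition in isolation, would leave that tool unproved. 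The form-boundedness step and the appeal to the KLMN-type representation theorem are essentially the same in both arguments, and your explicit verification of the form identity for the series is a reasonable addition that the paper leaves implicit.
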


\begin{proof}
The key estimate on which all rests is the following:
\begin{align}\label{R I2}
\Bigl\| \sqrt{R_0}\, q\, \sqrt{R_0} \Bigr\|^2_{\text{op}} \leq \Bigl\| \sqrt{R_0}\, q\, \sqrt{R_0} \Bigr\|^2_{\mathfrak I_2(\R)} &= \frac1\kappa \int \frac{|\hat q(\xi)|^2}{\xi^2+4\kappa^2}\,d\xi .
\end{align}
For $q\in \mathcal S(\R)$ the Hilbert--Schmidt norm can be evaluated directly using \eqref{R resolvent}:
\begin{align*}
\Bigl\| \sqrt{R_0}\, q\, \sqrt{R_0} \Bigr\|^2_{\mathfrak I_2(\R)} &= \frac1{4\kappa^2} \iint q(x) e^{-2\kappa|x-y|}q(y)\,dx\,dy = \text{RHS\eqref{R I2}}.
\end{align*}
This then extends to all $q\in H^{-1}(\R)$ by approximation.

From \eqref{R I2}, we see that
\begin{align*}
\int q(x)|\psi(x)|^2 \,dx \leq \kappa^{-1/2} \|q\|_{H^{-1}} \int |\psi'(x)|^2 + \kappa^2 |\psi(x)|^2\,dx \qtq{for all} \psi \in H^1(\R),
\end{align*}
at least for all $\kappa\geq 1$.   (Note that the LHS here should be interpreted via the natural pairing between $H^{-1}$, which contains $q$, and $H^1$, which contains $|\psi|^2$.)  This estimate shows that
$q$ is an infinitesimally form-bounded perturbation of the case $q\equiv 0$ and so the existence and uniqueness of $L$ follows from \cite[Theorem X.17]{MR0493420}.

In view of \eqref{R I2}, the series \eqref{E:R series} converges provided we just choose $\delta <1$.
\end{proof}

\begin{prop}[Diffeomorphism property]\label{P:diffeo} There exists $\delta>0$ so that the following are true for all $\kappa\geq 1${\upshape:}\\
(i) For each $q\in B_\delta$, the resolvent $R$ admits a continuous integral kernel $G(x,y;\kappa,q)${\upshape;} thus, we may unambiguously define
\begin{align}\label{g defn}
g(x;\kappa,q):=G(x,x;\kappa,q).
\end{align}
(ii) The mappings
\begin{align}\label{diffeos}
q\mapsto g-\tfrac1{2\kappa} \qtq{and} q\mapsto \kappa-\tfrac1{2g}
\end{align}
are (real analytic) diffeomorphisms of $B_\delta$ into $H^1(\R)$.\\
(iii) If $q(x)$ is Schwartz, then so are $g(x)- \tfrac1{2\kappa}$ and $\kappa-\tfrac1{2g(x)}$.  Indeed,
\begin{align}\label{g stronger mapping}
\|g'(x)\|_{H^{s}}  \lesssim_s \| q \|_{H^{s-1}} \qtq{and} \| \langle x\rangle ^s g'(x) \|_{L^2}  \lesssim_s \| \langle x\rangle ^s q \|_{H^{-1}}
\end{align}
for every integer $s\geq 0$.
\end{prop}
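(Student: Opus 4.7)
The plan is to exploit the Neumann series from Proposition~\ref{P:sa L} for parts (i)--(ii) and the classical Ricatti-type identity satisfied by the diagonal Green's function for part (iii). The principal obstacle is the quantitative $H^1$ bound on $h := g - \tfrac{1}{2\kappa}$ needed for the diffeomorphism property in (ii); unlike the qualitative continuity of $G$ from (i), this requires tracking the gains from both \eqref{R I2} and the mapping $\sqrt{R_0}: L^2 \to H^1$ across every term of the series. Once this is established, the rest reduces to the analytic inverse function theorem and Sobolev algebra manipulations.

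For (i), I would rewrite \eqref{E:R series} as $R = R_0 + \sqrt{R_0}\,B\,\sqrt{R_0}$ where $B := -(1+A)^{-1}A$ with $A := \sqrt{R_0}\,q\,\sqrt{R_0}$. By \eqref{R I2} and the ideal property of $\I_2$, both $A$ and $B$ are Hilbert--Schmidt. The key observation is that $\sqrt{R_0}$ is convolution with $\phi_\kappa := \mathcal{F}^{-1}[(\xi^2+\kappa^2)^{-1/2}]$, which lies in $L^2(\R)$. Hence
\[
\bigl(\sqrt{R_0}\,B\,\sqrt{R_0}\bigr)(x,y) = \bigl\langle \phi_\kappa(x-\cdot),\, B\,\phi_\kappa(\cdot - y)\bigr\rangle_{L^2(\R)}
\]
depends continuously on $(x,y)$ because translation is continuous on $L^2$. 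Combined with the continuity of $G_0(x,y) = \tfrac{1}{2\kappa} e^{-\kappa|x-y|}$, this yields continuity of $G(x,y;\kappa,q)$ and makes \eqref{g defn} well defined.

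For (ii), the same series provides an $H^1$-valued analytic expansion of $h$ in $q$. The leading-order term,
\[
h_1(x) = -\int G_0(x,y)^2\,q(y)\,dy \qtq{with} \widehat{h_1}(\xi) = -\tfrac{1}{\kappa(\xi^2+4\kappa^2)}\,\hat q(\xi),
\]
satisfies $\|h_1\|_{H^1} \lesssim \kappa^{-1}\|q\|_{H^{-1}}$ for $\kappa \geq 1$, and higher-order terms are controlled by iterating \eqref{R I2} together with the boundedness of $\sqrt{R_0}: L^2 \to H^1$. The linearization $q \mapsto h_1$ is a Fourier multiplier whose symbol, viewed as a map $H^{-1} \to H^1$, is bounded above and below by positive constants for each fixed $\kappa$; the analytic inverse function theorem then yields a diffeomorphism from $B_\delta$ onto an open subset of $H^1$ (possibly after shrinking $\delta$). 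The second map $q \mapsto \kappa - \tfrac{1}{2g}$ factors through the first via
\[
\kappa - \frac{1}{2g} = \frac{2\kappa^2\,h}{1 + 2\kappa\,h},
\]
which defines an analytic diffeomorphism near $h = 0$ thanks to the algebra property of $H^1_\kappa$ recorded in Subsection~\ref{S:1.1}.

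For (iii), I would exploit the Ricatti-type identity
\[
-2gg'' + (g')^2 + 4(q+\kappa^2)\,g^2 = 1,
\]
derived by writing $g = \psi_+\psi_-/W$, where $\psi_\pm$ are the Weyl solutions of $(-\partial_x^2 + q + \kappa^2)\psi = 0$ and $W$ is their (constant) Wronskian. Differentiating once in $x$ yields $g''' - 4\kappa^2 g' = 4qg' + 2q'g$, equivalently
\[
\widehat{g'}(\xi) = -\frac{\widehat{4qg'+2q'g}(\xi)}{\xi^2+4\kappa^2}.
\]
Since $(\xi^2+4\kappa^2)^{-1} \lesssim (\xi^2+4)^{-1}$ for $\kappa \geq 1$, this encodes a smoothing gain of two derivatives. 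Starting from the $H^1$ control on $h$ obtained in (ii) and invoking the Sobolev algebra/duality bounds from Subsection~\ref{S:1.1}, a standard bootstrap closes the first estimate in \eqref{g stronger mapping} for every integer $s \geq 0$, and hence the Schwartz statement. The weighted estimate follows analogously after multiplying the identity by $\langle x\rangle^s$, since the commutator of $\langle x\rangle^s$ with $(-\partial_x^2+4\kappa^2)^{-1}$ is of lower order.
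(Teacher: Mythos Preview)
Your approach to parts (i) and (ii) is essentially the paper's, with one gap: you do not secure a $\kappa$-independent $\delta$. You observe that the linearization $q\mapsto h_1$ has Fourier symbol $-\kappa^{-1}(\xi^2+4\kappa^2)^{-1}$, which is bounded above and below as a map $H^{-1}\to H^1$ ``for each fixed $\kappa$.'' But the ratio of these bounds is $\kappa^2$, so the inverse function theorem in the standard norms yields a neighborhood that shrinks as $\kappa\to\infty$. The paper avoids this by working in $H^{-1}_\kappa\to H^1_\kappa$, where $dg|_{q\equiv0}=-\kappa^{-1}R_0(2\kappa)$ is an isometry (condition number $1$), and then showing
\[
\bigl\| dg|_{q\equiv0}-dg|_q\bigr\|_{H^{-1}_\kappa\to H^{1}_\kappa}\lesssim \kappa^{-1}\|q\|_{H^{-1}_\kappa}\lesssim\delta\cdot\bigl\|(dg|_{q\equiv0})^{-1}\bigr\|^{-1}
\]
uniformly in $\kappa$; the diffeomorphism on $B_\delta\subset\{\,\|q\|_{H^{-1}_\kappa}\leq\delta\,\}$ follows. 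You invoke $H^1_\kappa$ only for the second map in \eqref{diffeos}; it is needed already for the first.

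For part (iii) you take a genuinely different route. The paper differentiates the translation identity $g(x+h;q)=g(x;q(\cdot+h))$ and distributes $\partial_x^s$ across the series \eqref{E:g series}, then estimates each term by duality and \eqref{R I2}; the weighted estimate is handled by commuting $\langle x\rangle^s$ through $R_0$ term by term. Your alternative---derive the ODE $g'''-4\kappa^2 g'=4qg'+2q'g$ from the quadratic relation $-2gg''+(g')^2+4(q+\kappa^2)g^2=1$, invert $(-\partial_x^2+4\kappa^2)$ for a two-derivative gain, and bootstrap in $s$---is perfectly sound; the product estimates close using $H^1(\R)\hookrightarrow L^\infty$ and the algebra/duality bounds in Subsection~\ref{S:1.1}. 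This is arguably more elementary and reuses the identity that the paper records separately as Proposition~\ref{P:elliptic}. The paper's series approach, on the other hand, gives the estimates directly without an induction and makes the analyticity in $q$ manifest at every regularity.
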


\begin{remark}
The diffeomorphism property is necessarily restricted to a neighborhood of the origin because for $q$ large, the spectrum of $L$ may intersect $-\kappa^2$.
\end{remark}

\begin{proof}
Initially, we ask that $\delta\leq \frac12$; later, we will add further restrictions.

From \eqref{E:R series} and \eqref{R I2}, we see that
\begin{align*}
\Bigl\| \sqrt{\kappa^2-\partial_x^2} \bigl(R - R_0\bigr) \sqrt{\kappa^2-\partial_x^2} \Bigr\|_{\I_2} < \infty \qtq{for all} q\in B_\delta \qtq{and all} \kappa\geq 1.
\end{align*}
Consequently, $G-G_0$ exists as an element of $H^1(\R)\otimes H^1(\R)$.  Here we mean tensor product in the Hilbert-space sense (cf. \cite{MR0493419}); note that $H^1(\R)\otimes H^1(\R)$ is comprised of those $f\in H^1(\R^2)$ for which $\partial_x\partial_yf \in L^2(\R^2)$.  It follows that $G(x,y;\kappa,q)$ is a continuous function of $x$ and $y$ and we may define
\begin{align}\label{E:g series}
g(x) = g(x;\kappa,q) = \tfrac1{2\kappa} + \sum_{\ell=1}^\infty (-1)^\ell \Bigl\langle\sqrt{R_0}\delta_x,\ \Bigl( \sqrt{R_0}\,q\,\sqrt{R_0}\Bigr)^\ell \sqrt{R_0} \delta_x\Bigr\rangle,
\end{align}
where inner products are taken in $L^2(\R)$.  This settles (i).

Next we observe that by \eqref{E:R series} and \eqref{R I2},
\begin{align*}
\Bigl| \int f(x) \bigl[g(x) -\tfrac1{2\kappa}\bigr]\,dx \Bigr| &\leq \sum_{\ell=1}^\infty  \Bigl\| \sqrt{R_0}\, f\, \sqrt{R_0} \Bigr\|_{\mathfrak I_2(\R)}\Bigl\| \sqrt{R_0}\, q\, \sqrt{R_0} \Bigr\|_{\mathfrak I_2(\R)}^\ell \\
&\leq 2\delta\kappa^{-1} \|f\|_{H^{-1}(\R)}
\end{align*}
for any Schwartz function $f$.  Thus $g-\frac1{2\kappa}\in H^1(\R)$; indeed,
\begin{align}\label{g H1 bound}
  \bigl\| g - \tfrac1{2\kappa} \bigr\|_{H^1(\R)} \leq 2\delta\kappa^{-1}.
\end{align}
Moreover, this argument precisely shows the convergence of the series \eqref{E:g series} and so that the mapping from $q\in B_\delta$ to $g-\frac1{2\kappa}\in H^{1}(\R)$ is real analytic.

Given $f\in H^{-1}(\R)$, the resolvent identity implies
\begin{align}\label{O35pp}
\frac{d\ }{ds}\biggr|_{s=0} g(x;q+sf) = - \int G(x,y)f(y)G(y,x)\,dy .
\end{align}
In particular, by \eqref{R resolvent},
$$
dg\bigr|_{q\equiv 0} = - \kappa^{-1} R_0(2\kappa),
$$
which is an isomorphism of $H^{-1}_\kappa$ onto $H^1_\kappa$, with condition number equal to $1$. Moreover, by \eqref{R I2}, \eqref{E:R series}, and duality,
\begin{align}\label{inverse input}
\bigl\| dg\bigr|_{q\equiv 0} - dg\bigr|_q \bigr\|_{H^{-1}_\kappa \to H^{1\vphantom{+}}_\kappa} \lesssim \kappa^{-1} \|q\|_{H^{-1}_\kappa}
    \lesssim  \delta \Bigl\| \bigl(dg\bigr|_{q\equiv 0} \bigr)^{-1} \Bigr\|_{H^{1\vphantom{+}}_\kappa \to H^{-1}_\kappa}^{-1} .
\end{align}
Thus choosing $\delta$ sufficiently small, the inverse function theorem guarantees that
\begin{align}\label{g is diffeo}
q \mapsto g - \tfrac{1}{2\kappa} \quad\text{is a diffeomorphism of $\{ q : \|q\|_{H^{-1}_\kappa} \leq \delta\}$ into $H^{1}_\kappa$}.
\end{align}
Note that \eqref{inverse input} combined with the standard contraction-mapping proof of the implicit function theorem guarantees that $\delta$ can be chosen independently of $\kappa$.  The claimed $H^{-1}\to H^1$ diffeomorphism property of this map then follows since
$$
\|q\|_{H^{-1}_\kappa} \leq \|q\|_{H^{-1}} \qtq{and} \| f\|_{H^1_\kappa} \lesssim_\kappa \|f\|_{H^1}.
$$

Choosing $\delta$ even smaller if necessary, \eqref{g H1 bound} together with the embedding $H^1\hookrightarrow L^\infty$ guarantees that
$$
\tfrac1{4\kappa} \leq g(x) \leq \tfrac{3}{4\kappa} \quad\text{for all $q\in B_\delta$.}
$$
Consequently, the second mapping in \eqref{diffeos} is also real-analytic.  To prove that it is a diffeomorphism (for some $\kappa$-independent choice of $\delta$), we simply note that
$$
f \mapsto \frac{f}{1+f}
$$
is a diffeomorphism from a neighbourhood of zero in $H^1(\R)$ into $H^1(\R)$, write
$$
\kappa-\tfrac1{2g} = \kappa \frac{2\kappa(g-\frac1{2\kappa} )}{1+2\kappa(g-\frac1{2\kappa} )},
$$
and use \eqref{g H1 bound} together with \eqref{g is diffeo}.

We now turn our attention to part (iii).  The Green's function associated to a translated potential is simply the translation of the original Green's function.  Correspondingly,
\begin{align}\label{translation identity}
g(x+h;q) = g\bigl(x; q(\cdot+h)\bigr)  \qquad\text{for all $h\in\R$.}
\end{align}
Differentiating with respect to $h$ at $h=0$ and invoking \eqref{E:g series} yields
\begin{align*}
\int \bigl[\partial_x^s g(x)\bigr] f(x)\,dx &\leq \sum_{\ell=1}^\infty \sum_{\sigma} \binom{s}{\sigma}  \Bigl\| \sqrt{R_0}\, f\, \sqrt{R_0} \Bigr\|_{\mathfrak I_2(\R)}
    \prod_{k=1}^\ell \Bigl\| \sqrt{R_0}\, q^{(\sigma_k)}\, \sqrt{R_0} \Bigr\|_{\mathfrak I_2(\R)}.
\end{align*}
Here, the inner sum extends over multi-indices $\sigma=(\sigma_1,\ldots,\sigma_\ell)$ with $|\sigma|=s$.  Maximizing over unit vectors $f\in H^{-1}$, exploiting \eqref{R I2}, and using
$$
 \prod_{k=1}^\ell \bigl\| q^{(\sigma_k)} \bigr\|_{H^{-1}} \leq  \bigl\| q^{(s)} \bigr\|_{H^{-1}} \bigl\| q \bigr\|_{H^{-1}}^{\ell-1},
$$
which is merely an application of Holder's inequality in Fourier variables, this yields
\begin{align*}
\bigl\|\partial_x^s g(x)\bigr\|_{H^{1}}  &\leq  \sum_{\ell=1}^\infty \ell^s \bigl\| q^{(s)} \bigr\|_{H^{-1}} \delta^{\ell-1} \lesssim_s \bigl\| q\bigr\|_{H^{s-1}}.
\end{align*}
Thus we have verified the first claim in \eqref{g stronger mapping}.

To address the second assertion in \eqref{g stronger mapping}, we first make the following claim: For every integer $s\geq 0$,
\begin{align}\label{R langle commutator}
\langle x\rangle^s R_0 = \sum_{r=0}^s \sqrt{R_0}\, A_{r,s} \sqrt{R_0}\, \langle x\rangle^r
\qtq{with operators} \| A_{r,s} \|_{L^2\to L^2}\lesssim_s 1.
\end{align}
This is easily verified recursively, by repeatedly using the following commutators:
\begin{equation}\label{basic commutators}
\begin{aligned}
\bigl[ \langle x\rangle,\  R_0\bigr] = R_0 \bigl[ -\partial_x^2 +\kappa^2 ,\ \langle x\rangle \bigr] R_0 &= -R_0 \bigl(\tfrac{x}{\langle x\rangle} \partial_x + \partial_x\tfrac{x}{\langle x\rangle}\bigr) R_0 \\
\bigl[ \tfrac{x}{\langle x\rangle} \partial_x + \partial_x\tfrac{x}{\langle x\rangle},\ \langle x\rangle\bigr]  &= 2\tfrac{x^2}{\langle x\rangle^2}.
\end{aligned}
\end{equation}

In connection with \eqref{R langle commutator}, let us also pause to note that
\begin{align}\label{E:weight change}
\bigl\| \langle x\rangle^r q \bigr\|_{H^{-1}} \lesssim_{s} \bigl\| \langle x\rangle^s q \bigr\|_{H^{-1}} \quad\text{for any pair of integers $0\leq r\leq s$},
\end{align}
since $\langle x\rangle^{-1}\in H^1(\R)$, which is an algebra.

By applying \eqref{E:g series}, \eqref{R I2}, \eqref{R langle commutator}, and \eqref{E:weight change}, we deduce that
\begin{align*}
\int  f(x) & \langle x\rangle^s \bigl[g(x) -\tfrac{1}{2\kappa} \bigr]\,dx \\
&\lesssim_s \sum_{\ell=1}^\infty \sum_{r=0}^s \Bigl\| \sqrt{R_0}\, f\, \sqrt{R_0} \Bigr\|_{\mathfrak I_2(\R)}  \Bigl\| \sqrt{R_0}\, \langle x\rangle^r q\, \sqrt{R_0} \Bigr\|_{\mathfrak I_2(\R)} \delta^{\ell-1} \\
&\lesssim_s \bigl\| f \bigr\|_{H^{-1}} \bigl\| \langle x\rangle^s q \bigr\|_{H^{-1}}.
\end{align*}
Optimizing over $f\in H^{-1}(\R)$, it then follows that
$$
\| \langle x\rangle^s g'(x) \|_{L^2(\R)} \lesssim_s \bigl\| \langle x\rangle^s \bigl[g(x) -\tfrac{1}{2\kappa} \bigr] \bigr\|_{H^1(\R)} \lesssim_s \| \langle x\rangle ^s q \|_{H^{-1}},
$$
thereby completing the proof of \eqref{g stronger mapping} and so the proof of the proposition.
\end{proof}

\begin{prop}[Elliptic PDE]\label{P:elliptic}
The diagonal Green's function obeys
\begin{align}
g'''(x) &= 2 \bigl[q(x) g(x) \bigr]' + 2 q(x) g'(x) + 4\kappa^2 g'(x) .\label{E:l5.1a}
\end{align}
\end{prop}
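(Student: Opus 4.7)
The plan is to establish \eqref{E:l5.1a} first for Schwartz $q$ by direct computation from the factored form of the Green's function, and then to extend to all $q\in B_\delta$ by approximation using the continuity supplied by Proposition~\ref{P:diffeo}.

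For Schwartz $q$ and $\kappa\geq 1$, the ODE $\psi''=(q+\kappa^2)\psi$ admits (nonzero) solutions $u,v$ that decay exponentially at $+\infty$ and $-\infty$ respectively; their Wronskian $W=uv'-u'v$ is constant in $x$ and is nonzero since $-\kappa^2$ lies in the resolvent set of $L$. The integral kernel of $R=(L+\kappa^2)^{-1}$ is then the standard
$$
G(x,y)=W^{-1}u(x_>)v(x_<),\qquad g(x)=W^{-1}u(x)v(x).
$$
I would differentiate $g$ three times and substitute $u''=(q+\kappa^2)u$ and $v''=(q+\kappa^2)v$ at each stage. This yields first $Wg'=u'v+uv'$, then $Wg''=2(q+\kappa^2)Wg+2u'v'$, and finally
$$
Wg'''=2q'Wg+2(q+\kappa^2)Wg'+2(u''v'+u'v'')=2q'Wg+4(q+\kappa^2)Wg'.
$$
Dividing by $W$ and regrouping the right-hand side as $2(qg)'+2qg'+4\kappa^2g'$ gives exactly \eqref{E:l5.1a}.

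For the extension to $q\in B_\delta$, I would choose Schwartz approximants $q_n\to q$ in $H^{-1}(\R)$; for $n$ large we have $q_n\in B_\delta$ and the Schwartz-data identity applies to each $q_n$. By Proposition~\ref{P:diffeo}, the corresponding $g_n:=g(\cdot;\kappa,q_n)$ converge to $g$ in $H^1(\R)$, so in particular $g_n',g_n'''\to g',g'''$ in $\mathcal{D}'(\R)$. Using the product estimate $\|fh\|_{H^{-1}_\kappa}\lesssim\|f\|_{H^1}\|h\|_{H^{-1}_\kappa}$ already recorded in \S\ref{S:1.1}, together with $g_n\to g$ in $H^1$, one obtains $q_ng_n\to qg$ and $q_ng_n'\to qg'$ in $H^{-1}(\R)$. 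Every term in \eqref{E:l5.1a} thus converges as a distribution and the identity passes to the limit.

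The only point requiring real care is making sense of the nonlinear products $qg$ and $qg'$ when $q$ is merely in $H^{-1}$. This is precisely where the $H^1$-regularity of $g$ from Proposition~\ref{P:diffeo} is essential: the multiplication $H^{-1}\times H^1\to H^{-1}$ (available because $H^1$ is an algebra in one dimension) gives these products unambiguous meaning and supplies the continuity in $q$ needed for the density argument. This is the mildest of technicalities; the algebraic heart of the proposition is the three-line ODE computation above.
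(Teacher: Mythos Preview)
Your Schwartz-case argument is correct and is in fact the classical route, attributed to Appell in the Remark immediately following the proposition. The paper instead works directly with the distributional equations $(-\partial_x^2+q(x)+\kappa^2)G(x,y)=\delta(x-y)=(-\partial_y^2+q(y)+\kappa^2)G(x,y)$, computes $(\partial_x+\partial_y)^3 G$ from these, and restricts to the diagonal. Your approach trades this two-variable calculus for the explicit factorization $g=W^{-1}uv$ and elementary one-variable differentiation; the paper's approach avoids having to invoke existence of the Weyl solutions and works purely from the resolvent equation. Both deliver the same three-line core computation.

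There is, however, a gap in your density argument. You invoke the product estimate $\|fh\|_{H^{-1}_\kappa}\lesssim\|f\|_{H^1}\|h\|_{H^{-1}_\kappa}$ to control $q_ng_n'$, but this requires one factor in $H^1$, and for general $q\in B_\delta$ you only have $g-\tfrac{1}{2\kappa}\in H^1$, hence $g'\in L^2$, not $H^1$. The term $qg'$ with $q\in H^{-1}$ and $g'\in L^2$ is not covered by that bilinear estimate, and indeed there is no general $H^{-1}\times L^2$ product. (Note also that $g$ itself is not in $H^1$---only $g-\tfrac{1}{2\kappa}$ is---so even the $qg$ step needs the constant split off first.) The paper sidesteps this entirely: its proof is formal and is only ever applied later to Schwartz $q$ (see Lemma~\ref{L:G ibp} and the proof of Proposition~\ref{P:H kappa}), so no such extension is required. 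Your Schwartz-case proof is all that is needed; the approximation paragraph should be dropped or rephrased as a remark that the identity is only used for smooth $q$.
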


\begin{proof}
By virtue of being the Green's function,
\begin{align*}
\bigl( -\partial_x^2 + q(x) \bigr) G(x,y) = -\kappa^2 G(x,y) + \delta(x-y) = \bigl( -\partial_y^2 + q(y) \bigr) G(x,y)
\end{align*}
and consequently,
\begin{align*}
\bigl( \partial_x + \partial_y\bigr)^3 G(x,y) &= \bigl(q'(x)+q'(y)\bigr)G(x,y) + 2\bigl(q(x)+q(y)\bigr) \bigl( \partial_x + \partial_y\bigr) G(x,y) \\
&\qquad - \bigl(q(x)-q(y)\bigr) \bigl( \partial_x - \partial_y\bigr) G(x,y) +4\kappa^2\bigl(\partial_x + \partial_y\bigr) G(x,y).
\end{align*}
Thus specializing to $y=x$, we deduce that
$$
g'''(x) = 2 q'(x) g(x) + 4 q(x) g'(x) + 4 \kappa^2 g'(x),
$$
which agrees with \eqref{E:l5.1a} after regrouping terms.
\end{proof}

\begin{remark}
As will be discussed in the proof of Lemma~\ref{L:D 1/g}, the Green's function can be expressed in terms of two solutions $\psi_\pm(x)$ to the Sturm--Liouville equation (the Weyl solutions); see \eqref{G from psi}.
In this sense, $g(x)=\psi_+(x)\psi_-(x)$ was seen to obey \eqref{E:l5.1a} already in \cite{Appell}.
\end{remark}

\begin{prop}[Introducing $\rho$]\label{P:Intro rho}
There exists $\delta>0$ so that
\begin{align}\label{E:rho defn}
\rho(x;\kappa,q) := \kappa - \tfrac{1}{2g(x)} + \tfrac12\int e^{-2\kappa|x-y|} q(y)\,dy
\end{align}
belongs to $L^1(\R) \cap H^1(\R)$ for all $q\in B_\delta$ and $\kappa\geq 1$.  Moreover, fixing $x\in\R$, the map $q\mapsto \rho(x)$ is non-negative and convex.  Additionally,
\begin{align}\label{E:alpha defn}
\alpha(\kappa;q) := \int_\R \rho(x)\,dx
\end{align}
defines a non-negative, real-analytic, strictly convex function of $q\in B_\delta$, and satisfies
\begin{align}\label{alpha as I2}
\alpha(\kappa;q) \approx \frac{1}{\kappa} \int_\R \frac{|\hat q(\xi)|^2\,d\xi}{\xi^2+4\kappa^2},
\end{align}
uniformly for $q\in B_\delta$ and $\kappa\geq 1$.  Lastly,
\begin{align}\label{O37}
\alpha(\kappa;q) = - \log\det_2\left( 1+ \sqrt{R_0}\, q \, \sqrt{R_0} \right).
\end{align}
\end{prop}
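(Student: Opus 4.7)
My plan has three movements: (i) establish $\rho\in H^1(\R)$, (ii) prove the Fredholm-determinant identity \eqref{O37}, and (iii) deduce from \eqref{O37} the properties of $\alpha$, then use a Cauchy--Schwarz argument for the pointwise properties of $\rho$. For (i), Proposition~\ref{P:diffeo} already gives $\kappa-1/(2g)\in H^1(\R)$ as a real-analytic function of $q\in B_\delta$. The second summand in \eqref{E:rho defn} is, up to the factor $2\kappa$, the action of $(-\partial_x^2+4\kappa^2)^{-1}$ on $q$, so it lies in $H^1(\R)$ whenever $q\in H^{-1}(\R)$, uniformly for $\kappa\ge 1$. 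Hence $\rho\in H^1(\R)$ and $q\mapsto\rho(x)$ is real-analytic; the $L^1$ bound will be extracted from the pointwise non-negativity established below.

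For (ii), I will show that both sides of \eqref{O37} vanish at $q\equiv 0$ and have identical variational derivatives throughout the connected set $B_\delta$. On the left, integrating \eqref{O35pp} gives
$$
\frac{\delta\alpha}{\delta q(y)}=\frac{1}{2\kappa}-\int_\R\frac{G(x,y)^2}{2g(x)^2}\,dx.
$$
I will evaluate this integral via the Weyl representation $G(x,y)=W^{-1}\psi_+(x\vee y)\psi_-(x\wedge y)$ and $g(x)=\psi_+(x)\psi_-(x)/W$, where $\psi_\pm$ are the decaying solutions of $L\psi=-\kappa^2\psi$. The relation $(\psi_+/\psi_-)'=-W/\psi_-^2$ combined with the boundary behaviour of $\psi_\pm$ yields $\int_y^\infty dx/\psi_-(x)^2 = \psi_+(y)/(W\psi_-(y))$ and its mirror on $(-\infty,y]$; summing gives $\int G(x,y)^2/g(x)^2\,dx=2g(y)$, so $\delta\alpha/\delta q(y) = 1/(2\kappa) - g(y)$. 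On the right, writing $K=\sqrt{R_0}\,q\,\sqrt{R_0}$ and using the series $-\log\det_2(1+K)=\sum_{k\ge 2}\tfrac{(-1)^k}{k}\tr(K^k)$ (convergent by \eqref{R I2} once $\delta$ is small), the directional derivative in direction $f$ collapses to $\tr[K(1+K)^{-1}\sqrt{R_0}f\sqrt{R_0}] = \tr[M]-\tr[(1+K)^{-1}M]$ with $M=\sqrt{R_0}f\sqrt{R_0}$. The identity $R=\sqrt{R_0}(1+K)^{-1}\sqrt{R_0}$, which follows from $R_0^{-1/2}(L+\kappa^2)R_0^{-1/2}=1+K$, together with cyclicity of the trace and the diagonal traces $\tr[R_0 f]=\tfrac{1}{2\kappa}\int f$ and $\tr[Rf]=\int g\,f$, reduces the right-hand derivative to the same $1/(2\kappa)-g(y)$.

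Given \eqref{O37}, the remaining properties of $\alpha$ follow quickly. Non-negativity: since $K$ is self-adjoint with $\|K\|<1$, each eigenvalue $\lambda_j>-1$ obeys $\log(1+\lambda_j)-\lambda_j\le 0$ with equality iff $\lambda_j=0$, so $\alpha=-\sum_j[\log(1+\lambda_j)-\lambda_j]\ge 0$ with strictness when $q\not\equiv 0$. Strict convexity: the Hessian $\tr[(1+K)^{-2}(\sqrt{R_0}f\sqrt{R_0})^2]$ is strictly positive for $f\not\equiv 0$. Real-analyticity: $q\mapsto K$ is analytic $H^{-1}\to\mathfrak{I}_2$ by \eqref{R I2}, and $\log\det_2$ is analytic on $\{\|K\|<1\}$. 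The two-sided estimate \eqref{alpha as I2} comes from the leading expansion $-\log\det_2(1+K)=\tfrac12\|K\|_{\mathfrak{I}_2}^2+O(\|K\|_{\mathfrak{I}_2}^3)$ combined with \eqref{R I2}, with $\delta$ chosen small enough to absorb the cubic remainder.

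The main obstacle is the pointwise statement that $q\mapsto\rho(x)$ is non-negative and convex for each fixed $x$, as $\alpha\ge 0$ alone does not yield $\rho(x)\ge 0$. I will differentiate \eqref{O35pp} once more to obtain the Hessian
$$
\frac{\delta^2\rho(x)}{\delta q(y)\delta q(z)}=\frac{G(x,y)G(y,z)G(x,z)}{g(x)^2}-\frac{G(x,y)^2 G(x,z)^2}{g(x)^3},
$$
and then verify positivity of the associated quadratic form by Cauchy--Schwarz: setting $\psi(y):=G(x,y)f(y)$ and $\phi:=R\psi$, the form equals $g(x)^{-2}\langle\psi,R\psi\rangle-g(x)^{-3}\phi(x)^2$, and the estimate $\phi(x)^2=\langle\sqrt{R}\delta_x,\sqrt{R}\psi\rangle^2\le g(x)\langle\psi,R\psi\rangle$ (made rigorous by approximating $\delta_x$) yields $\ge 0$. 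Since a direct check gives $\rho(x)|_{q\equiv 0}=0$ and $\delta\rho(x)/\delta q|_{q\equiv 0}=0$ (using $G_0(x,y)^2/g_0(x)^2=e^{-2\kappa|x-y|}$), convexity plus Taylor's formula with integral remainder delivers $\rho\ge 0$ pointwise, and hence $\rho\in L^1(\R)$ from $\int\rho=\alpha<\infty$.
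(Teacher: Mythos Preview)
Your strategy matches the paper's closely: the Weyl-solution computation you describe is exactly Lemma~\ref{L:D 1/g}, the Cauchy--Schwarz argument for convexity of $q\mapsto\rho(x)$ is the paper's argument verbatim, and \eqref{O37} is proved in both cases by matching $\delta\alpha/\delta q = \tfrac{1}{2\kappa}-g$ with the derivative of the determinant series. Your derivation of the global properties of $\alpha$ directly from the determinant (eigenvalue inequality $\log(1+\lambda)\le\lambda$ for $\alpha\ge 0$, leading-term expansion for \eqref{alpha as I2}) is a mild shortcut; the paper instead computes the Hessian of $\alpha$ at $q=0$ explicitly and bounds its variation over $B_\delta$.

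There is one circularity you need to address. You defer $\rho\in L^1$ to the end, deducing it from $\rho\ge 0$ together with $\int\rho=\alpha<\infty$, where the latter equality is \eqref{O37} from your step~(ii). But step~(ii) proceeds by matching variational derivatives of $q\mapsto\int_\R\rho(x)\,dx$, which presupposes that this integral is finite and differentiable in $q$---precisely what you are trying to establish. The paper avoids this by proving $\rho\in L^1$ directly at the outset via the decomposition $\rho = 2\kappa^2\bigl[g-\tfrac1{2\kappa}+\tfrac1\kappa R_0(2\kappa)q\bigr] - \tfrac{2\kappa^2}{g}\bigl(g-\tfrac1{2\kappa}\bigr)^2$ and the trace estimate~\eqref{E:L1 est}, so that $\alpha$ is well-defined and real-analytic before any derivative matching takes place. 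Your ordering can be repaired: once you know $t\mapsto\rho(x;tq)$ is convex with value and slope zero at $t=0$, it is non-decreasing on $t\ge 0$, and Tonelli then justifies $\int_\R\rho(x;q)\,dx = \int_0^1\!\int_\R\bigl[\tfrac1{2\kappa}-g(x;tq)\bigr]q(x)\,dx\,dt<\infty$; but this step must be made explicit. A smaller issue: the splitting $\tr[K(1+K)^{-1}M]=\tr M-\tr[(1+K)^{-1}M]$ is not valid for general $f\in H^{-1}$, since $M=\sqrt{R_0}\,f\sqrt{R_0}$ is only Hilbert--Schmidt, not trace class; either work term by term in the series \eqref{det series} and identify the result with \eqref{E:g series} (as the paper does), or restrict first to $f\in L^1$ and extend by density.
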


\begin{remarks}
1. Although we shall have no use for the strict convexity of $q\mapsto\alpha(\kappa;q)$ in this paper, it does have important consequences.  Most notably, by the Radon--Riesz argument, it shows that weakly continuous solutions conserving $\alpha(\kappa)$ are automatically norm-continuous.

2. As noted in the Introduction (see \eqref{Intro renorm} and subsequent discussion), the quantity $\alpha(\kappa;q)$ is essentially the logarithm of the transmission coefficient and so well-studied.  Nevertheless, none of the literature we have studied contains the representation \eqref{E:alpha defn} in terms of the reciprocal of the Green's function.  Rather, prior works employ an integral representation based on the logarithmic derivative of one of the Jost solutions; see \eqref{E:a from Weyl}.  To the best of our knowledge, this approach originates in \cite[\S3]{MR0303132}, where it was shown to be an effective tool for deriving polynomial conservation laws and for demonstrating that these polynomial conservation laws appear as coefficients in the asymptotic expansion of the logarithm of the transmission coefficient as $\kappa\to\infty$.
\end{remarks}

Before turning to the proof of Proposition~\ref{P:Intro rho}, we first explain the meaning of RHS\eqref{O37} and then present two lemmas that we shall need.

The symbol $\det_2$ denotes the renormalized Fredholm determinant introduced by Hilbert in \cite{Hilbert}; see \cite{MR2154153} for a more up-to-date exposition.  In the context of Proposition~\ref{P:Intro rho}, our choice of $\delta$ guarantees that the operator
$$
A = \sqrt{R_0}\, q \, \sqrt{R_0} \qtq{obeys} \| A \|_{\I_2} < 1.
$$
Consequently, it suffices for what follows to exploit only the notion of the trace of an operator (rather than determinant) thanks to the identity
\begin{align}\label{det series}
-\log \det_2 \bigl(1 + A \bigr) = \tr\big( A - \log(1+A) \bigr) =  \sum_{\ell=2}^\infty \frac{(-1)^\ell}{\ell} \tr\bigl( A^\ell \bigr).
\end{align}
We shall not delve deeply into such matters here, since \eqref{O37} has no bearing on the proof of well-posedness for KdV; indeed, our only reason for verifying this identity is to make the link to the prior works \cite{KVZ,MR2683250}, which might otherwise seem unrelated.

\begin{lemma}\label{L:D 1/g}
There exists $\delta>0$ so that
\begin{align}\label{GgG identity}
\int \frac{G(x,y;\kappa,q)G(y,x;\kappa,q)}{2g(y;\kappa,q)^2}\,dy = g(x;\kappa,q)
\end{align}
for all $q\in B_\delta$ and all $\kappa\geq 1$.
\end{lemma}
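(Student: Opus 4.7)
My plan is to reduce this to a classical identity from Weyl--Titchmarsh theory. For Schwartz $q$ (where everything makes sense classically), let $\psi_\pm(x)$ denote the Weyl solutions to $-\psi'' + q\psi = -\kappa^2\psi$ decaying as $x\to\pm\infty$ respectively; their Wronskian $W = \psi_-'\psi_+ - \psi_-\psi_+'$ is constant and nonzero (for small $\delta$, since $-\kappa^2$ lies in the resolvent set of $L$). The standard Weyl representation then reads
\begin{equation*}
G(x,y) = W^{-1}\psi_-(\min(x,y))\,\psi_+(\max(x,y)), \qquad g(x) = W^{-1}\psi_+(x)\psi_-(x).
\end{equation*}

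The engine of the proof is a reduction-of-order identity: because $\psi_+$ decays at $+\infty$ it grows exponentially at $-\infty$, so $\int_{-\infty}^x\psi_+(y)^{-2}\,dy$ converges, and $\phi(x):=\psi_+(x)\int_{-\infty}^x\psi_+(y)^{-2}\,dy$ is a second solution of the Schr\"odinger equation, linearly independent of $\psi_+$, with Wronskian $W(\psi_+,\phi)=1$; matching decay at $-\infty$ forces $\phi = \psi_-/W$. Symmetrically, $\psi_-(x)\int_x^\infty\psi_-(y)^{-2}\,dy = \psi_+(x)/W$. Multiplying these identities by $\psi_\pm$ respectively yields the twin representations
\begin{equation*}
g(x) = \psi_+(x)^2\int_{-\infty}^x\frac{dy}{\psi_+(y)^2} = \psi_-(x)^2\int_x^\infty\frac{dy}{\psi_-(y)^2}.
\end{equation*}
Splitting the integral in \eqref{GgG identity} at $y=x$ and observing that the Weyl formula gives $G(x,y)^2/g(y)^2 = \psi_+(x)^2/\psi_+(y)^2$ for $y<x$ and $G(x,y)^2/g(y)^2 = \psi_-(x)^2/\psi_-(y)^2$ for $y>x$, the two halves each contribute $\tfrac12 g(x)$, summing to $g(x)$ as required.

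To remove the Schwartz assumption and reach general $q\in B_\delta$, I would argue by real-analyticity. The bound $g\geq 1/(4\kappa)$ from Proposition~\ref{P:diffeo} ensures the LHS of \eqref{GgG identity} is well-defined (the integrand is the square of an $L^2$ function divided by a bounded one). Both sides are pointwise-in-$x$ real-analytic functions of $q\in B_\delta$: for the RHS this is Proposition~\ref{P:diffeo}, while for the LHS one expands $G$ using the series \eqref{E:R series} and uses that $q\mapsto 1/(2g^2)$ is analytic with values in a space of bounded functions (by composition with the analytic map $f\mapsto 1/(1+f)^2$ on a neighbourhood of $0$ in $H^1$). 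Since Schwartz functions are dense in the connected open set $B_\delta\subset H^{-1}(\R)$, the identity extends. The only nontrivial conceptual step is the reduction-of-order observation in the middle paragraph; the rest is standard setup and analytic continuation.
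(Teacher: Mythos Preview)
Your proof is correct and follows essentially the same approach as the paper's own. Both reduce to Schwartz $q$ by analyticity, write $G$ in terms of Weyl solutions $\psi_\pm$, split the integral at $y=x$, and evaluate each half via the identity $\int_{-\infty}^x \psi_+(y)^{-2}\,dy = \psi_-(x)/[W\psi_+(x)]$ (and its mirror image); the paper derives this from $\tfrac{d}{dy}\bigl(\psi_-/\psi_+\bigr)=\psi_+^{-2}$ and the fundamental theorem of calculus, while you phrase it as reduction of order, but these are the same computation. The paper also normalizes $W=1$ from the outset, which slightly streamlines the algebra.
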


\begin{remark}
Augmenting the proof below with the results of \cite[\S8.3]{MR0069338} shows that \eqref{GgG identity} holds also in the case of $q\in H^{-1}(\R/\Z)$ and $\kappa\geq 1$ that obey \eqref{periodic smallness}.  As below, one first uses analyticity to reduce to a case where one may apply ODE techniques, more specifically, to the case of small smooth periodic potentials.
\end{remark}

\begin{proof}
We choose $\delta>0$ as needed for Proposition~\ref{P:diffeo}.  In this case, both sides of \eqref{GgG identity} are analytic functions of $q$.  Consequently, it suffices to prove the result under the additional hypotheses that $q$ is Schwartz and $\|q\|_{L^\infty}<1$.

Techniques in Sturm--Liouville theory (cf. \cite[\S3.8]{MR0069338}) show that there are solutions $\psi_\pm(x)$ to
\begin{align}\label{ODE}
-\psi'' + q \psi = -\kappa^2 \psi
\end{align}
that decay (along with derivatives) exponentially as $x\to\pm\infty$ and grow exponentially as $x\to\mp\infty$.  Constancy of the Wronskian guarantees that these Weyl solutions (as they are known) are unique up to scalar multiples; we (partially) normalize them by requiring the Wronskian relation
\begin{equation}\label{E:Wron}
\psi_+(x) \psi_-'(x) - \psi_+'(x)\psi_-(x) = 1
\end{equation}
and that $\psi_\pm(x) >0$.  Note that the Sturm oscillation theorem guarantees that neither solution may change sign.

Using the Weyl solutions, we may write the Green's function as
\begin{align}\label{G from psi}
G(x,y) = \psi_+(x\vee y) \psi_-(x\wedge y).
\end{align}
In this way, the proof of the lemma reduces to showing that
\begin{align}\label{pre FTC}
\tfrac12 \int_{-\infty}^x \Bigl[\tfrac{\psi_+(x)}{\psi_+(y)}\Bigr]^2 \,dy + \tfrac12 \int_x^\infty \Bigl[\tfrac{\psi_-(x)}{\psi_-(y)}\Bigr]^2 \,dy = \psi_+(x)\psi_-(x).
\end{align}
However, by \eqref{E:Wron}, we have
\begin{align*}
\tfrac{d\ }{dy} \tfrac{\psi_-(y)}{\psi_+(y)} = \tfrac{1}{\psi_+(y)^2} \qtq{and} \tfrac{d\ }{dy} \tfrac{\psi_+(y)}{\psi_-(y)} = - \tfrac{1}{\psi_-(y)^2}.
\end{align*}
Thus \eqref{pre FTC} follows by the fundamental theorem of calculus and the exponential behavior of $\psi_\pm(y)$, as $|y|\to \infty$.
\end{proof}

\begin{remark}
As mentioned above, there is an alternate integral representation of $\alpha(\kappa;q)$ introduced much earlier.  The proof of Lemma~\ref{L:D 1/g} provides the requisite vocabulary to explain what that is:
\begin{align}\label{E:a from Weyl}
\log[a(i\kappa)] = - \int \tfrac{\psi_+'(y)}{\psi_+(y)} + \kappa \,dy = \int \tfrac{\psi_-'(y)}{\psi_-(y)} - \kappa \,dy.
\end{align}
Here $\psi_\pm$ represent the Weyl solutions; however, the formula applies equally well using the Jost solutions, since they differ only in normalization.  It is in this equivalent form that the first identity appears in \cite[\S3]{MR0303132}.
Averaging these two representations and invoking \eqref{E:Wron} and then \eqref{G from psi} yields
\begin{align}\label{E:a from little g}
\log[a(i\kappa)] = \int \tfrac{1}{2\psi_-(y)\psi_+(y)} - \kappa \,dy = \int \tfrac{1}{2g(y)} - \kappa\,dy,
\end{align}
which is readily seen to be equivalent to \eqref{E:alpha defn}.  One easy way to distinguish these three representations is the fact that $\psi_+(y)$ depends only on the values of $q$ on the interval $[y,\infty)$, while $\psi_-(y)$ is determined by $q$ on the interval $(-\infty,y]$; on the other hand, $g(y)$ depends on the values of $q$ throughout the real line.
\end{remark}

The following identity will be used not only in the proof of Proposition~\ref{P:Intro rho}, but also in Section~\ref{S:3}.

\begin{lemma}\label{L:G ibp}  Given Schwartz functions $f$ and $q$,
\begin{align*}
& \int G(x,y;\kappa,q) \bigl[ - f'''(y) + 2q(y)f'(y) + 2\bigl(q(y)f(y)\bigr)'+4\kappa^2 f'(y)\bigr]G(y,x;\kappa,q)\,dy \\
&= 2 f'(x)g(x;\kappa,q) - 2f(x)g'(x;\kappa,q).
\end{align*}
This identity also holds if merely $f(x)-c$ is Schwartz for some constant $c$.
\end{lemma}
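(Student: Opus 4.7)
The plan is to evaluate the left-hand side using the Weyl solutions.  As in the proof of Lemma~\ref{L:D 1/g}, introduce the positive Weyl solutions $\psi_\pm$ of \eqref{ODE} normalized by \eqref{E:Wron}, so that $G(x,y) = \psi_+(x\vee y)\,\psi_-(x\wedge y)$.  A short calculation shows that, for any $W$ with $W''=(q+\kappa^2)W$, one has $(W^2)''' = 4(q+\kappa^2)(W^2)' + 2q'(W^2)$; equivalently, both $u := \psi_-^2$ and $v := \psi_+^2$ lie in the kernel of the operator $\mathcal{T}f := -f'''+4qf'+2q'f+4\kappa^2 f'$, which is precisely the bracketed expression in the lemma (after combining $2qf'+2(qf)' = 4qf'+2q'f$).

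Splitting the $y$-integral at $y=x$ and pulling out the $x$-dependent factors, the left-hand side becomes
\[
\psi_+(x)^2 \int_{-\infty}^x u\,\mathcal{T}f\,dy \;+\; \psi_-(x)^2 \int_x^{\infty} v\,\mathcal{T}f\,dy.
\]
The next step is to integrate by parts three times on each subinterval.  Because $\mathcal{T}$ is formally skew-adjoint and annihilates both $u$ and $v$, the interior integrals collapse to zero.  The boundary terms at $\pm\infty$ also vanish: for the left piece, $u,u',u''$ decay exponentially at $-\infty$ while $f,f',f''$ are bounded; for the right piece, the analogous statement holds with $v$ at $+\infty$.  What survives is $\psi_+(x)^2 B(u,f;x) - \psi_-(x)^2 B(v,f;x)$ with
\[
B(w,f;x) := -w f'' + w' f' - w'' f + 4(q+\kappa^2)\,w f.
\]

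It now remains to simplify this expression.  The $f''$ and $4(q+\kappa^2)f$ contributions cancel between the two pieces by symmetry.  The coefficient of $f'$ collapses via \eqref{E:Wron} to $2\psi_+\psi_-(\psi_+\psi_-' - \psi_+'\psi_-) = 2g$.  For the coefficient of $f$, substituting $(\psi_\pm^2)'' = 2(\psi_\pm')^2 + 2(q+\kappa^2)\psi_\pm^2$ eliminates the $(q+\kappa^2)$ pieces and leaves a difference of squares
\[
2\psi_-^2(\psi_+')^2 - 2\psi_+^2(\psi_-')^2 = 2\bigl(\psi_-\psi_+' - \psi_+\psi_-'\bigr)\bigl(\psi_-\psi_+' + \psi_+\psi_-'\bigr) = -2g',
\]
again using the Wronskian and the identity $g' = (\psi_+\psi_-)'$.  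Combining yields the desired $2f'(x)g(x) - 2f(x)g'(x)$.  For the extension to $f$ with $f-c$ Schwartz, no new input is needed: $f',f'',f'''$ remain Schwartz, so the integrations by parts proceed unchanged, and the only boundary contributions that the non-decaying part of $f$ can affect (those involving $f$ without a derivative, at infinity) still vanish because $w''$ and $(q+\kappa^2)w$ both decay exponentially.  The main obstacle is purely bookkeeping, namely, organizing the many boundary contributions so that the Wronskian cancellations fall into place; no input beyond the Sturm--Liouville framework is required.
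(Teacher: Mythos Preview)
Your proof is correct and takes a genuinely different route from the paper's.  The paper proceeds via an operator identity: one checks directly that, as operators,
\[
-f''' = (-\partial^2+\kappa^2)f' + f'(-\partial^2+\kappa^2) - 2(-\partial^2+\kappa^2)f\partial + 2\partial f(-\partial^2+\kappa^2) - 4\kappa^2 f',
\]
then sandwiches by $R=(L+\kappa^2)^{-1}$ and reads off the diagonal of the resulting integral kernel.  You instead exploit the Weyl-solution factorization $G(x,y)=\psi_+(x\vee y)\psi_-(x\wedge y)$, observe that $\psi_\pm^2$ lie in the kernel of the (formally skew-adjoint) third-order operator $\mathcal T$, and let the resulting integrations by parts collapse to Wronskian-type boundary expressions at $y=x$.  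Your approach is more concrete and makes the connection with classical ODE theory explicit, but it is tied to the Weyl representation (and hence to the Schwartz hypothesis on $q$, where that representation was established).  The paper's operator-identity argument is slicker and requires only the defining resolvent property of $G$; indeed the authors remark that an integration-by-parts proof is also available, but find the operator presentation ``more palatable.''  Your treatment of the extension to $f-c$ Schwartz is fine: since $\mathcal T f$ is then still Schwartz and the only boundary terms carrying an undifferentiated $f$ are paired with exponentially decaying factors $w''$ or $(q+\kappa^2)w$, nothing changes.
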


\begin{proof}
The argument that follows applies equally well irrespective of the presence/absence of the constant $c$.  Alternately, as both sides of the identity are linear in $f$, the cases $f$ Schwartz and $f$ constant can be treated separately. However, when $f$ is constant the identity can be obtained more swiftly by other means; see \eqref{translation identity'}.

The most elementary proof proceeds from the defining property of $G$, namely,
$$
\bigl(-\partial_y^2 + q(y) + \kappa^2\bigr)G(y,x) = \bigl(-\partial_y^2 + q(y) + \kappa^2\bigr)G(x,y) = \delta(x-y)
$$
and integration by parts.  However, we find the argument more palatable when presented in terms of operator identities.  Specifically, from the operator identity
\begin{align*}
- f''' &= (-\partial^2+\kappa^2)f' + f' (-\partial^2+\kappa^2) - 2(-\partial^2+\kappa^2)f\partial +  2\partial f(-\partial^2+\kappa^2) - 4\kappa^2f',
\end{align*}
it follows that
\begin{align*}
- R f''' R = f'R - 2 R qf' R + Rf' - 2f\partial R - 2R[\partial,qf]R  + 2 R\partial f - 4\kappa^2 Rf'R.
\end{align*}
Noting, for example, that
$$
g'(x) = \langle\delta_x, [\partial,R]\delta_x\rangle,
$$
the lemma then follows by considering the diagonal of the associated integral kernel.
\end{proof}

\begin{proof}[Proof of Proposition~\ref{P:Intro rho}]
By \eqref{R resolvent},
$$
\tfrac12\int e^{-2\kappa|x-y|} q(y)\,dy = 2\kappa [R_0(2\kappa) q](x).
$$
Combined with Proposition~\ref{P:diffeo}, this shows $\rho\in H^1(\R)$.  Next we write
$$
\rho(x) = 2\kappa^2 \bigl[g - \tfrac1{2\kappa} + \tfrac1{\kappa} R_0(2\kappa) q\bigr](x) - \tfrac{2\kappa^2}{g(x)}[g(x)-\tfrac1{2\kappa}]^2 .
$$
The second summand belongs to $L^1(\R)$ by Proposition~\ref{P:diffeo}; thus it remains to consider the first summand.  To this end, we use \eqref{E:g series} and \eqref{R I2} to obtain
\begin{align}
\int \bigl[g - \tfrac1{2\kappa} + \tfrac1{\kappa} R_0(2\kappa) q\bigr](x) f(x)\,dx &= \sum_{\ell=2}^\infty (-1)^\ell \tr\Bigl\{\sqrt{R_0} f \sqrt{R_0} \Bigl( \sqrt{R_0}\,q\,\sqrt{R_0}\Bigr)^\ell \Bigr\} \notag\\
&\leq \| f \|_{L^\infty}  \Bigl\|\sqrt{R_0}\Big\|_{op}^2 \Bigl\| \sqrt{R_0}\,q\,\sqrt{R_0}\Bigr\|^2_{\I_2} \sum_{\ell=2}^\infty \delta^{\ell-2},\label{E:L1 est}
\end{align}
from which we may conclude that $\rho\in L^1(\R)$.  Note that the arguments just presented actually show that $q\mapsto\rho$ is real analytic as a mapping of $B_\delta$ into $L^1\cap H^1$.

To show convexity at fixed $x$, we compute derivatives.  As in \eqref{O35pp}, the resolvent identity guarantees that
\begin{align}\label{drho}
d[\rho(x)]\bigr|_q (f) =  \tfrac{-1}{2g(x)^2} \int G(x,y) f(y) G(y,x) \,dy + \tfrac12 [e^{-2\kappa|\cdot|} * f ](x)
\end{align}
and thence
\begin{align}\label{ddrho}
d^2[\rho(x)]\bigr|_q (f,h) = {}&{}\tfrac{-1}{g(x)^3} \iint G(x,y) f(y) G(y,x) G(x,z) h(z) G(z,x) \,dy\,dz \\
& + \tfrac{1}{g(x)^2} \iint G(x,y) f(y) G(y,z) h(z) G(z,x) \,dy\,dz. \notag
\end{align}
Multiplying through by $g(x)^3>0$ we then see that the convexity of $\rho(x)$ is reduced to the assertion that
$$
\bigl\langle \sqrt{R} \delta_x,\sqrt{R} \delta_x\bigr\rangle \bigl\langle \sqrt{R} \delta_x,\sqrt{R}fRf\sqrt{R}\,\sqrt{R} \delta_x\bigr\rangle
    - \bigl\langle \sqrt{R} \delta_x,\sqrt{R}f\sqrt{R}\, \sqrt{R} \delta_x\bigr\rangle^2 \geq 0
$$
for all $f\in H^{-1}(\R)$.  (Here inner-products are taken in $L^2(\R)$ which contains $\sqrt{R} \delta_x$.)  The veracity of this assertion now follows immediately from the Cauchy--Schwarz inequality.

Specializing \eqref{drho} to $q\equiv 0$ and substituting \eqref{R resolvent} shows
\begin{align}\label{d rho 0}
\frac{\delta\rho(x)}{\delta q}\biggr|_{q\equiv0} = 0.
\end{align}
Note also that $\rho(x)\equiv0$ when $q\equiv 0$.  In this way the convexity of $q\mapsto \rho(x)$ guarantees its positivity.

Let us now turn our attention to $\alpha(\kappa;q)$.  In view of the preceding, we already know that this is a non-negative, convex, and real-analytic function of $q\in B_\delta$.  It remains to show strict convexity, \eqref{alpha as I2}, and \eqref{O37}.

As we have already noted, $\rho(x)\equiv 0$ when $q\equiv 0$.  Thus \eqref{O37} holds trivially in this case.  In general \eqref{O37} follows easily from
\begin{align}\label{delta alpha}
\frac{\delta \alpha}{\delta q} = \tfrac{1}{2\kappa} - g(x)  = \frac{\delta\ }{\delta q} - \log\det_2\left( 1+ \sqrt{R_0}\, q \, \sqrt{R_0} \right),
\end{align}
which we will now verify.

From \eqref{drho} and Lemma~\ref{L:D 1/g},
\begin{align*}
\frac{d\ }{ds}\biggr|_{s=0} \alpha(\kappa; q+sf) &= - \iint \frac{G(y,x)G(x,y)}{2g(x)^2} f(y)\,dx  \,dy+ \tfrac1{2\kappa}\int f(y)\,dy \\
&= \int \Bigl[\tfrac{1}{2\kappa} - g(x)\Bigr] f(x)\,dx,
\end{align*}
at least for Schwartz functions $f$.  This proves the first equality in \eqref{delta alpha}.

From \eqref{det series} and \eqref{E:g series}, we have
\begin{align*}
\frac{d\ }{ds}\biggr|_{s=0} - \log&\det_2\left( 1+ \sqrt{R_0}\, (q+sf) \, \sqrt{R_0} \right) \\
&= \sum_{\ell=2}^\infty (-1)^\ell\tr\Bigl\{ \Bigl(\sqrt{R_0}\, q \, \sqrt{R_0} \Bigr)^{\ell-1} \sqrt{R_0}\, f \, \sqrt{R_0} \Bigr\} \\
&= \int \Bigl[\tfrac{1}{2\kappa} - g(x)\Bigr] f(x)\,dx.
\end{align*}
This verifies the second equality in \eqref{delta alpha} and so finishes the proof of \eqref{O37}.

Toward verifying strict convexity and \eqref{alpha as I2}, let us first compute the Hessian of $\alpha(\kappa)$ at $q\equiv 0$.  From \eqref{ddrho} and \eqref{R resolvent}, we have
\begin{align}
d^2\alpha\bigr|_{q\equiv 0} (f,f) &= -\tfrac{1}{2\kappa} \iiint e^{-2\kappa|x-y| - 2\kappa|x-z|}  f(y)f(z) \,dx\,dy\,dz \notag\\
&\quad  + \tfrac{1}{2\kappa} \iiint e^{-\kappa|x-y| - \kappa|y-z| - \kappa|z-x|} f(y) f(z) \,dx\,dy\,dz  \label{delta2alpha}\\
&= \tfrac{1}{4\kappa^2} \iint e^{-2\kappa|y-z|} f(y) f(z) \,dy\,dz = \tfrac{1}{\kappa} \int \frac{|\hat f(\xi)|^2}{\xi^2+4\kappa^2}\,d\xi. \notag
\end{align}
As $\alpha(\kappa)$ is real analytic, this immediately shows strict convexity and \eqref{alpha as I2} in some neighbourhood of $q\equiv 0$; however, to verify that the size $\delta$ of this neighbourhood may
be taken independent of $\kappa$, we must adequately control the modulus of continuity of the Hessian.  From \eqref{inverse input} and the first identity in \eqref{delta alpha}, we have
$$
\Bigl|\Bigl( d^2\alpha\bigr|_{q\equiv 0} - d^2\alpha\bigr|_{q}\Bigr)(f,f)\Bigr| \lesssim \delta\kappa^{-1} \int \frac{|\hat f(\xi)|^2}{\xi^2+4\kappa^2}\,d\xi,
$$
thereby settling the matter.
\end{proof}

\section{Dynamics}\label{S:3}

The natural Poisson structure on $\mathcal S(\R)$ or $C^\infty(\R/\Z)$ associated to the KdV equation is
\begin{align}\label{3.0}
\{ F, G \} = \int  \frac{\delta F}{\delta q}(x) \biggl(\frac{\delta G}{\delta q}\biggr) '(x) \,dx .
\end{align}
This structure is degenerate: $q\mapsto \int q$ is a Casimir (i.e. Poisson commutes with everything).  It is common practice to say that this is the
Poisson bracket associated to the (degenerate) almost complex structure $J=\partial_x$ and the $L^2$ inner product.  We shall not need such notions; however, they do suggest a very convenient notation for the time-$t$ flow under the Hamiltonian $H$:
$$
q(t) = e^{t J\nabla\! H} q(0).
$$
Note that under our sign conventions,
$$
\frac{d\ }{dt}\ F\circ e^{t J\nabla\! H} = \{ F, H \} \circ e^{t J\nabla\! H}.
$$

As two simple examples, we note that for
$$
P:=\int \tfrac12 |q(x)|^2\,dx \qtq{and} H_\text{KdV} := \int \tfrac12 |q'(x)|^2 + q(x)^3 \,dx,
$$
we have
\begin{align}\label{trivial delta}
\frac{\delta P}{\delta q}(x) = q(x)  \qtq{and} \frac{\delta H_\text{KdV}}{\delta q}(x) = -q''(x) + 3q(x)^2.
\end{align}
Thus, the flow associated to $P$ is precisely $\partial_t q = \partial_x q$, which is to say, $P$ represents momentum (= generator of translations); the flow associated to $H_\text{KdV}$ is precisely the KdV equation.
Note that $H_\text{KdV}$ and $P$ Poisson commute:
$$
\{H_\text{KdV},P\}=\int \bigl(-q''(x)+3q(x)^2\bigr) q'(x)\,dx = \int \bigl(-\tfrac12 q'(x)^2 + q(x)^3\bigr)' \,dx =0.
$$
This simultaneously expresses that the KdV flow conserves $P$ and that $H_\text{KdV}$ is conserved under translations.  Moreover, the two flows commute:
$$
e^{s J\nabla\! P} \circ e^{t J\nabla\! H_\text{KdV}} = e^{t J\nabla\! H_\text{KdV}} \circ e^{s J\nabla\! P} \qtq{for all} s,t\in\R,
$$
at least as mappings of Schwartz space.  The claim that the KdV flow commutes with translations is without controversy; nonetheless, it is important for what follows to see that it stems precisely from the vanishing of the Poisson bracket.  Fortunately, by restricting our attention to Schwartz-space solutions, we may simply apply the standard arguments from differential geometry; see, for example, \cite[\S39]{MR0997295}.

We will also consider one more Hamiltonian, namely,
\begin{align}\label{H kappa defn}
H_\kappa := - 16 \kappa^5 \alpha(\kappa) + 2 \kappa^2 \int q(x)^2\,dx
\end{align}
which, formally at least, converges to
$$
H_\text{KdV} := \int \tfrac12 |q'(x)|^2 + q(x)^3 \,dx
$$
as $\kappa\to\infty$.  In due course, we will see that $H_\kappa$ leads to a well-posed flow on $H^{-1}$ and that it Poisson commutes with both $P$ and $H_\text{KdV}$, at least as a functional on Schwartz space.  For the moment, however, let us describe the evolution of the diagonal Green's function under the KdV flow.

\begin{prop}\label{L:5.2}
Given $\delta>0$, there is a $\delta_0>0$ so that for every Schwartz solution $q(t)$ to KdV with initial data
$q(0)\in B_{\delta_0}$, we have
\begin{align}\label{prop small}
\sup_{t\in\R}\| q(t)\|_{H^{-1}(\R)} \leq \delta.
\end{align}
Moreover, for each $\kappa\geq 1$, the quantities $g(t,x)=g(x;\kappa,q(t))$, $\rho(t,x)=\rho(x;\kappa,q(t))$, and $\alpha(\kappa;q(t))$ obey
\begin{gather}
\tfrac{d\ }{dt}\, g(t,x)  = -2 q'(t,x) g(t,x) + 2 q(t,x) g'\!(t,x) - 4\kappa^2 g'\!(t,x) \label{E:l5.1c}\\
\tfrac{d\ }{dt}  \, \tfrac{1}{2g(t,x)} = \Bigl( \tfrac{q(t,x)}{g(t,x)} - \tfrac{2\kappa^2}{g(t,x)} + 4\kappa^3\Bigr)' \label{E:l5.1g} \\
\tfrac{d\ }{dt}  \rho(t,x) = \Bigl(\tfrac32 \bigl[e^{-2\kappa|\cdot|}* q^2\bigr](t,x)  + 2q(t,x)\bigl[ \kappa - \tfrac{1}{2g(t,x)}\bigr] - 4\kappa^2\rho(t,x) \Bigr)' \label{E:l5.1h} \\
\tfrac{d\ }{dt} \alpha(\kappa;q(t)) = 0. \label{E:l5.1z}
\end{gather}
\end{prop}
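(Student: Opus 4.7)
My plan is to establish the four microscopic evolution identities \eqref{E:l5.1c}--\eqref{E:l5.1z} for Schwartz solutions, valid on any time interval where $q(t)\in B_\delta$, and then to deduce the a priori bound \eqref{prop small} via a bootstrap built on conservation of $\alpha$.

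The starting point for \eqref{E:l5.1c} is the resolvent identity: since $\partial_t L$ acts as multiplication by $\partial_t q = -q''' + 6qq'$, we have $\partial_t g(x) = -\int G(x,y)(\partial_t q)(y) G(y,x)\,dy$. The key observation is that the bracket in Lemma~\ref{L:G ibp}, evaluated at $f = q$, equals
\begin{align*}
-q''' + 2q\cdot q' + 2(q^2)' + 4\kappa^2 q' = (\partial_t q) + 4\kappa^2 q',
\end{align*}
so the lemma yields $-\partial_t g(x) + 4\kappa^2 \int G(x,y) q'(y) G(y,x)\,dy = 2q'(x) g(x) - 2q(x) g'(x)$. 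The remaining integral equals $-g'(x)$: differentiate the translation identity \eqref{translation identity} at $h=0$ and invoke \eqref{O35pp}. Rearranging produces \eqref{E:l5.1c}.

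Identity \eqref{E:l5.1g} is then a purely algebraic consequence of \eqref{E:l5.1c}: divide by $-2g(x)^2$ and regroup using $(q/g)' = q'/g - qg'/g^2$ and $(1/g)' = -g'/g^2$; the additive constant $4\kappa^3$ is free under the spatial derivative. For \eqref{E:l5.1h} I would differentiate \eqref{E:rho defn} in time. The $\partial_t[1/(2g)]$ piece is given by \eqref{E:l5.1g}; for the convolution piece, setting $K(x) = \tfrac12 e^{-2\kappa|x|}$ and using the distributional identity $K'' = 4\kappa^2 K - 2\kappa\delta$, I would exchange derivatives of $q$ for $q$ itself: $K * q''' = 4\kappa^2(K*q)' - 2\kappa q'$ and $K * (6qq') = 3(K*q^2)'$. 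Collecting and regrouping, all terms assemble into the single spatial derivative on the right-hand side of \eqref{E:l5.1h}; this is a bookkeeping check (the $2\kappa q'$ flux produced from $K*q'''$ is exactly absorbed by the $(2\kappa q)'$ piece coming from differentiating $2q[\kappa - \tfrac{1}{2g}]$).

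Finally, \eqref{E:l5.1z} follows from \eqref{E:l5.1h} by integrating in $x$, once the flux decays at infinity. For Schwartz $q(t)$ this is immediate: $\tfrac{1}{2g} - \kappa$, $q$, and $K * q^2$ all decay, and so does $\rho$ by Proposition~\ref{P:Intro rho}. The bound \eqref{prop small} is then secured by bootstrap: combining \eqref{alpha as I2} at $\kappa = 1$ with \eqref{E:l5.1z}, on any interval where $q(t) \in B_\delta$ we obtain $\|q(t)\|_{H^{-1}(\R)}^2 \lesssim \alpha(1;q(t)) = \alpha(1;q(0)) \lesssim \|q(0)\|_{H^{-1}(\R)}^2$; choosing $\delta_0$ small enough that the implicit constant times $\delta_0$ is below $\delta/2$ and invoking continuity of $t\mapsto \|q(t)\|_{H^{-1}}$ for Schwartz KdV solutions, the standard continuity argument extends this bound to all $t\in\R$. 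The principal obstacle is precisely the interlocking nature of the statements: $\alpha$, $g$, and $\rho$ are only defined while $q(t) \in B_\delta$, so \eqref{E:l5.1z} cannot be claimed outright; but this is exactly what the bootstrap resolves, since conservation of $\alpha$ forces an $H^{-1}$ bound strictly better than $\delta$, preventing $\|q(t)\|_{H^{-1}}$ from ever reaching the boundary.
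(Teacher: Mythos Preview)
Your proof is correct and follows essentially the same route as the paper: resolvent identity plus Lemma~\ref{L:G ibp} with $f=q$ and the translation identity \eqref{translation identity'} for \eqref{E:l5.1c}, then chain rule for \eqref{E:l5.1g}, combination with \eqref{KdV} for \eqref{E:l5.1h}, integration for \eqref{E:l5.1z}, and the bootstrap via conservation of $\alpha(1;\cdot)$ and \eqref{alpha as I2} for \eqref{prop small}. Your derivation of \eqref{E:l5.1h} via the distributional identity $K'' = 4\kappa^2 K - 2\kappa\delta$ is more explicit than the paper's one-line remark, but it is the same computation underneath.
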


\begin{proof}
Without loss of generality, we may require that $\delta$ is a small as we wish.  We shall require that $\delta$ meets the requirements of Propositions~\ref{P:diffeo},~\ref{P:elliptic}, and~\ref{P:Intro rho}.  As an initial choice, we then set $\delta_0=\tfrac12\delta$.  This guarantees that these propositions are all applicable to $q(t)$ for some open interval of times containing $t=0$.  (Schwartz solutions are necessarily continuous in $H^{-1}(\R)$.)  We will show below that equations \eqref{E:l5.1c}--\eqref{E:l5.1z} are valid on this time interval. But then, choosing $\kappa=1$ in \eqref{alpha as I2} and \eqref{E:l5.1z}, we obtain
$$
\| q(t) \|_{H^{-1}} \lesssim \| q(0) \|_{H^{-1}}
$$
on this interval.  Thus we see that \eqref{prop small} holds globally in time, after updating our choice of $\delta_0$, if necessary.

It remains to show that the stated differential equations apply to Schwartz solutions whose $H^{-1}$ norm is small enough that the results of Section~\ref{S:2} apply.  We begin the proof in earnest after one minor preliminary: by taking an $h$ derivative in \eqref{translation identity} and using the resolvent identity, we have
\begin{align}\label{translation identity'}
g'(x; q) =  - \int G(x,y) q'(y) G(y,x)\,dy.
\end{align}

By the resolvent identity, then Lemma~\ref{L:G ibp}, and then \eqref{translation identity'},
\begin{align*}
\frac{d\ }{dt} g(&x; q(t)) =  - \int G(x,y) \bigl[ -q'''(t,y) + 6q(t,y)q'(t,y) \bigr]  G(y,x)\,dy \\
&= - 2 q'(t,x)g(x; q(t)) + 2 q(t,x) g'(x; q(t)) + 4\kappa^2 \int G(x,y) q'(t,y)  G(y,x)\,dy \\
&= - 2 q'(t,x)g(x; q(t)) + 2 q(t,x) g'(x; q(t)) - 4\kappa^2 g'(x;q(t)).
\end{align*}
This proves \eqref{E:l5.1c}.  Alternately, \eqref{E:l5.1c} can be derived from the Lax pair formulation of KdV; specifically,
\begin{align*}
\frac{d\ }{dt} \bigl(L(t)+\kappa^2\bigr)^{-1} = \bigl[ P(t), \bigl(L(t)+\kappa^2\bigr)^{-1} \bigr] .
\end{align*}
We leave the details to the interested reader.

Equation \eqref{E:l5.1g} follows immediately from \eqref{E:l5.1c} and the chain rule, while \eqref{E:l5.1h} is simply a combination of \eqref{E:l5.1g} and \eqref{KdV}.  Lastly, \eqref{E:l5.1z} follows from integrating \eqref{E:l5.1h} in $x$ over the whole line.
\end{proof}

\begin{remark} Combining \eqref{E:l5.1a} and \eqref{E:l5.1c} yields
\begin{align}
\tfrac{d\ }{dt}\, g(x) &= \Bigl( 2g''(x) -6q(x)g(x) - 12\kappa^2g(x) + 6\kappa \Bigr)' , \label{E:l5.1c'}
\end{align}
from which we see that there is also a microscopic conservation law for the KdV flow associated to $g(x)$. Ultimately, however, this turns out to be a consequence of the conservation of $\alpha(\kappa)$; specifically, we have
$$
\frac{d\ }{d\kappa} \alpha(\kappa) = - 2\kappa  \int g(x) - \tfrac{1}{2\kappa} + \tfrac{1}{4\kappa^3} q(x) \,dx.
$$
\end{remark}

\begin{prop}\label{P:H kappa}
Fix $\kappa\geq 1$. The Hamiltonian evolution induced by $H_\kappa$ is
\begin{align}\label{H kappa flow q}
\tfrac{d\ }{dt} q(x) = 16\kappa^5 g'(x;\kappa) + 4\kappa^2 q'(x).
\end{align}
This flow is globally well-posed for initial data in $B_\delta$, for $\delta>0$ small enough (independent of $\kappa$), and conserves $\alpha(\varkappa)$ for any $\varkappa\geq 1$.  Moreover, in the case of Schwartz-class initial data, the solution is Schwartz-class for all time, the associated diagonal Green's function evolves according to
\begin{align}\label{H kappa flow g}
\tfrac{d\ }{dt}  \, \tfrac{1}{2g(x;\vk)} &= - \tfrac{4\kappa^5}{\kappa^2-\vk^2} \Bigl( \tfrac{g(x;\kappa)}{g(x;\vk)} -\tfrac{\vk}{\kappa} \Bigr)' + 4\kappa^2 \Bigl( \tfrac{1}{2g(x;\vk)} - \vk \Bigr)'
    \quad\text{if $\vk\neq \kappa$},
\end{align}
and the flow commutes with that of $H_\text{KdV}$.
\end{prop}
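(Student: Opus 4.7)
The formula for the flow is immediate: from \eqref{delta alpha}, $\frac{\delta \alpha}{\delta q} = \frac{1}{2\kappa} - g(x;\kappa,q)$, so the variational derivative of $H_\kappa$ is $-16\kappa^5\bigl(\tfrac{1}{2\kappa} - g\bigr) + 4\kappa^2 q$, whose $\partial_x$ is \eqref{H kappa flow q}. For local well-posedness in $B_\delta$, the translation part $4\kappa^2 q'$ generates a unitary group on every $H^s$, and the main term $16\kappa^5 g'(\cdot;\kappa,q)$ is locally Lipschitz as a map $B_\delta \to H^{-1}(\R)$: by Proposition~\ref{P:diffeo}, $q \mapsto g - \tfrac{1}{2\kappa}$ is a real-analytic diffeomorphism from $B_\delta$ into $H^1(\R)$, and composing with $\partial_x: H^1 \to L^2 \hookrightarrow H^{-1}$ does the job. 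Picard iteration then yields a unique $H^{-1}$-valued solution on a time interval whose length is controlled by $\delta$ (and $\kappa$).

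To derive \eqref{H kappa flow g}, I differentiate $g(x;\vk,q(t))$ along the flow using the resolvent identity \eqref{O35pp}:
\begin{align*}
\tfrac{d}{dt} g(x;\vk) = -\int G(x,y;\vk)^2 \bigl[ 16\kappa^5 g'(y;\kappa) + 4\kappa^2 q'(y)\bigr] \,dy.
\end{align*}
The $q'(y)$ piece collapses to $4\kappa^2 g'(x;\vk)$ via \eqref{translation identity'}. For the $g'(\cdot;\kappa)$ piece, the elliptic equation \eqref{E:l5.1a} at energy $\kappa$ lets me rewrite $4(\vk^2-\kappa^2)\, g'(y;\kappa) = -g'''(y;\kappa) + 2q(y)g'(y;\kappa) + 2\bigl(q(y)g(y;\kappa)\bigr)' + 4\vk^2 g'(y;\kappa)$, which is exactly the form handled by Lemma~\ref{L:G ibp}. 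Applying that lemma with $f = g(\cdot;\kappa)$ (permitted because $f - \tfrac{1}{2\kappa}$ is Schwartz when $q$ is, by Proposition~\ref{P:diffeo}(iii)) evaluates the integral as $2g'(x;\kappa)g(x;\vk) - 2g(x;\kappa)g'(x;\vk) = 2 g(x;\vk)^2 \bigl(\tfrac{g(x;\kappa)}{g(x;\vk)}\bigr)'$. Substituting into $\tfrac{d}{dt}\tfrac{1}{2g(x;\vk)} = -\tfrac{1}{2g(x;\vk)^2}\tfrac{d}{dt}g(x;\vk)$ and absorbing constants into the $x$-derivatives then produces exactly \eqref{H kappa flow g}.

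Conservation of $\alpha(\vk)$ follows by integrating $\tfrac{d}{dt}\rho(x;\vk)$ in $x$: from \eqref{E:rho defn}, the $-\tfrac{1}{2g(x;\vk)}$ contribution is an exact $x$-derivative of a Schwartz function by \eqref{H kappa flow g} and Proposition~\ref{P:diffeo}(iii), while the convolution $\tfrac12\int e^{-2\vk|x-y|}q(y)\,dy$ contributes $\tfrac{1}{2\vk}\int\tfrac{dq}{dt}\,dx = 0$ since the RHS of \eqref{H kappa flow q} is itself an exact $x$-derivative. This proves conservation for Schwartz data, and the statement in $B_\delta$ follows by approximation together with the continuity of both the flow and $q \mapsto \alpha(\vk;q)$ (Proposition~\ref{P:Intro rho}). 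Specializing to $\vk = 1$ and invoking \eqref{alpha as I2} yields a uniform-in-time $H^{-1}$ bound, and global well-posedness follows after shrinking $\delta$ if necessary.

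It remains to address Schwartz persistence and commutation with $H_\mathrm{KdV}$. For persistence I would combine the weighted bounds \eqref{g stronger mapping} with the ODE argument in each weighted Sobolev space $\langle x\rangle^{-s} H^s$, verifying that the RHS of \eqref{H kappa flow q} stays in Schwartz space along the flow. Commutation with $H_\mathrm{KdV}$ on Schwartz space reduces to the Poisson commutativity $\{H_\kappa, H_\mathrm{KdV}\} = 0$, which decomposes as $-16\kappa^5\{\alpha(\kappa),H_\mathrm{KdV}\} + 2\kappa^2\{\int q^2, H_\mathrm{KdV}\}$; the first vanishes by \eqref{E:l5.1z} (the infinitesimal form of conservation of $\alpha(\kappa)$ under KdV), and the second vanishes by $P$-conservation. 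Standard symplectic reasoning in Schwartz space (cf.~\cite[\S39]{MR0997295}), where all flows in question are smooth, then upgrades bracket-vanishing to flow commutation. The main obstacle I anticipate is the Schwartz persistence step, because a direct Sobolev bootstrap through \eqref{g stronger mapping} appears to lose a derivative; one must either work in the $\kappa$-adapted Hilbert scale of $-\partial_x^2+\kappa^2$ or propagate the weighted estimates of Proposition~\ref{P:diffeo}(iii) in time to close the argument uniformly in $\kappa$.
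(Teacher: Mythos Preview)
Your proof is correct and follows essentially the same route as the paper: the derivation of \eqref{H kappa flow q} from \eqref{delta alpha}, local well-posedness by Picard iteration using the diffeomorphism property, the derivation of \eqref{H kappa flow g} via the resolvent identity, the rewriting of \eqref{E:l5.1a}, Lemma~\ref{L:G ibp}, and \eqref{translation identity'}, and the commutation argument via $\{H_\kappa,H_\text{KdV}\}=0$ all match the paper's proof closely.

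Your worry about Schwartz persistence is unfounded, and the paper dispatches it in a single sentence. The estimate \eqref{g stronger mapping} reads $\|g'\|_{H^s}\lesssim\|q\|_{H^{s-1}}$, which is a \emph{gain} of one derivative, not a loss; together with the weighted bound $\|\langle x\rangle^s g'\|_{L^2}\lesssim\|\langle x\rangle^s q\|_{H^{-1}}$, this shows that $q\mapsto 16\kappa^5 g'$ maps Schwartz to Schwartz (indeed, is locally Lipschitz on each weighted Sobolev space), while the $4\kappa^2 q'$ part is pure translation. Hence the same Picard argument closes in every $\langle x\rangle^{-s}H^s$ simultaneously, and Schwartz data stays Schwartz for all time. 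No $\kappa$-adapted Hilbert scale or separate bootstrap is required.
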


\begin{proof}
From \eqref{delta alpha} and \eqref{trivial delta} we see that
$$
\frac{\delta H_\kappa}{\delta q} = - 16 \kappa^5\bigl[\tfrac{1}{2\kappa} - g(x;\kappa,q)\bigr]  + 4 \kappa^2 q(x) ,
$$
from which \eqref{H kappa flow q} immediately follows.

Rewriting \eqref{H kappa flow q} as the integral equation
$$
q(t,x) = q(0,x+4\kappa^2 t) + \int_0^t 16\kappa^5 g'\bigl(x+4\kappa^2(t-s);\kappa,q(s)\bigr) \,ds ,
$$
we see that local well-posedness follows by Picard iteration and the estimate
$$
\bigl\| g'(x,q) - g'(x,\tilde q) \bigr\|_{H^{-1}} \lesssim \bigl\| g(x,q) - g(x,\tilde q) \bigr\|_{H^{1}} \lesssim \| q -\tilde q \|_{H^{-1}},
$$
which in turn follows from the diffeomorphism property.

Global well-posedness follows from local well-posedness, once we prove that $\alpha(\varkappa)$ is conserved, since we may then use \eqref{alpha as I2} to guarantee that the solution remains small in $H^{-1}$.  (This argument appeared already in Proposition~\ref{L:5.2}.)  Moreover, because the problem is $H^{-1}$-locally well-posed, it suffices to verify conservation of $\alpha(\vk)$ just in the case of Schwartz initial data.  Note that \eqref{g stronger mapping} shows that solutions with Schwartz initial data remain in Schwartz class.  So let us consider a Schwartz solution $q(t)$ to \eqref{H kappa flow q} and endeavor to prove conservation of $\alpha(\varkappa)$.  Actually, it suffices to prove \eqref{H kappa flow g}, because conservation of $\alpha(\varkappa)$ follows from this and \eqref{H kappa flow q}.

By the resolvent identity and \eqref{H kappa flow q},
\begin{align*}
\tfrac{d\ }{dt} \tfrac{1}{2 g(t,x;\vk)}  ={} & \tfrac{8\kappa^5}{g(t,x;\vk)^2} \int G(x,y;\vk,q(t)) g'(t,y;\kappa) G(y,x;\vk,q(t))\,dy \\
& + \tfrac{2\kappa^2}{g(t,x;\vk)^2} \int G(x,y;\vk,q(t)) q'(t,y) G(y,x;\vk,q(t))\,dy.
\end{align*}
From here we substitute the following rewriting of \eqref{E:l5.1a}
$$
4(\kappa^2-\vk^2) g'(y;\kappa) = -\bigl[ - g'''(y;\kappa) + 2\bigl(q(y)g(y;\kappa)\bigr)' + 2 q(y)g'(y;\kappa) + 4\vk^2g'(y;\kappa)\bigr]
$$
into the first term and use Lemma~\ref{L:G ibp}, while for the second term we employ \eqref{translation identity'}.  In this way, we deduce that
\begin{align*}
\tfrac{d\ }{dt} \tfrac{1}{2 g(t,x;\vk)}  &= - \tfrac{4\kappa^5}{(\kappa^2-\vk^2) g(t,x;\vk)^2} \bigl[ g'(t,x;\kappa)g(t,x;\vk)  - g(t,x;\kappa)g'(t,x;\vk) \bigr] \\
&\qquad - \tfrac{2\kappa^2}{g(t,x;\vk)^2} g'(t,x;\vk),
\end{align*}
which agrees with \eqref{H kappa flow g}.

Lastly, by \eqref{H kappa defn} and Proposition~\ref{L:5.2},
\begin{align*}
\{ H_\kappa, H_\text{KdV} \} = - 16 \kappa^5 \{ \alpha(\kappa), H_\text{KdV} \} + 4 \kappa^2 \{ P, H_\text{KdV} \} = 0,
\end{align*}
which shows that the $H_\kappa$ and $H_\text{KdV}$ flows commute, at least as mappings on Schwartz space.
\end{proof}

\section{Equicontinuity}\label{S:4}

Let us first recall the meaning of equicontinuity:

\begin{definition}
A subset $Q$ of $H^s$ is said to be \emph{equicontinuous} if
\begin{gather}
q(x+h) \to q(x) \quad\text{in $H^s$ as $h\to 0$, uniformly for $q\in Q$.} \label{E:equi1}
\end{gather}
\end{definition}

This definition works in great generality.  For $H^s$ spaces, it is also common to define equicontinuity as tightness of the Fourier transform.  The two approaches are easily reconciled, as our next lemma shows.

\begin{lemma}\label{L:equi 1}
Fix $-\infty < \sigma < s <\infty$. Then:\\
(i) A bounded subset $Q$ of $H^s(\R)$ is equicontinuous in $H^s(\R)$ if and only if
\begin{gather}
\int_{|\xi|\geq \kappa} |\hat q(\xi)|^2 (\xi^2+4)^s \,d\xi \to 0  \qtq{as $\kappa\to \infty$, uniformly for $q\in Q$.} \label{E:equi2}
\end{gather}
(ii) A sequence $q_n$ is convergent in $H^s(\R)$ if and only if it is convergent in $H^\sigma(\R)$ and equicontinuous in $H^s(\R)$.

\end{lemma}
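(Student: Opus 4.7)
The proof rests on the Fourier-side identity
\begin{align*}
\|q(\cdot+h) - q\|_{H^s(\R)}^2 = \int_\R |\hat q(\xi)|^2\, |e^{i\xi h}-1|^2\, (\xi^2+4)^s\,d\xi
\end{align*}
paired with the two elementary bounds $|e^{i\xi h}-1|^2 \leq \xi^2 h^2$ (sharp at low frequency) and $|e^{i\xi h}-1|^2 \leq 4$ (sharp at high frequency). The key auxiliary computation is the averaging identity
\begin{align*}
\tfrac{1}{2\delta}\int_{-\delta}^{\delta} |e^{i\xi h}-1|^2\,dh = 2 - \tfrac{2\sin(\xi\delta)}{\xi\delta},
\end{align*}
whose right-hand side is bounded below by the positive constant $1$ whenever $|\xi\delta| \geq 2$.

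For part (i), the implication from equicontinuity to \eqref{E:equi2} would proceed by integrating the Fourier identity in $h$ over $[-\delta,\delta]$: the left side is at most $\sup_{q\in Q}\|q(\cdot+h)-q\|_{H^s}^2$, made arbitrarily small by choice of $\delta$, while the right side dominates $\int_{|\xi|\geq 2/\delta} |\hat q(\xi)|^2 (\xi^2+4)^s\,d\xi$ by the averaging identity. Conversely, given the tail control \eqref{E:equi2}, I would split the Fourier identity at a fixed cutoff $|\xi|=\kappa$: the high-frequency portion is at most four times the tail (small uniformly in $q\in Q$ for $\kappa$ large, independently of $h$), while the low-frequency portion is bounded by $h^2 \kappa^2 \sup_{q\in Q}\|q\|_{H^s}^2$ (small for $h$ small, with $\kappa$ now fixed).

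For part (ii), the forward direction is routine: $H^\sigma$ convergence follows from the continuous embedding $H^s \hookrightarrow H^\sigma$, and equicontinuity in $H^s$ follows from the triangle inequality
\begin{align*}
\|q_n(\cdot+h)-q_n\|_{H^s} \leq 2\|q_n-q\|_{H^s} + \|q(\cdot+h)-q\|_{H^s},
\end{align*}
together with strong continuity of translation at the single function $q\in H^s$ and a separate treatment of any finite initial segment of the sequence. For the reverse direction, I would first apply the averaging argument above directly to $\{q_n\}$ to obtain uniform $H^s$-tail bounds. Combined with $H^\sigma$-boundedness (implied by $H^\sigma$ convergence), this yields $\sup_n\|q_n\|_{H^s}<\infty$. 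Next, extracting a subsequence along which $\hat q_{n_k}\to \hat q$ pointwise almost everywhere (available by $H^\sigma$ convergence), Fatou's lemma transfers the tail bound to the limit, so $q\in H^s$ with equally small tails. Finally, splitting
\begin{align*}
\|q_n-q\|_{H^s}^2 = \int_{|\xi|<\kappa}|\hat q_n-\hat q|^2(\xi^2+4)^s\,d\xi + \int_{|\xi|\geq\kappa}|\hat q_n-\hat q|^2(\xi^2+4)^s\,d\xi,
\end{align*}
the first integral is bounded by $(\kappa^2+4)^{s-\sigma}\|q_n-q\|_{H^\sigma}^2\to 0$ (for $\kappa$ fixed, as $n\to\infty$), while the second is uniformly controlled via $|a-b|^2\leq 2|a|^2+2|b|^2$ by the uniform $H^s$-tail bounds for $\{q_n\}$ and for $q$.

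I do not anticipate a substantive obstacle; the only mildly delicate point is the passage to a subsequence with a.e.\ pointwise Fourier convergence in order to apply Fatou. Once $q\in H^s$ and its tail bound have been established this way, the $H^s$-convergence $q_n\to q$ follows along the whole sequence by the splitting above, without need for a further subsequence argument.
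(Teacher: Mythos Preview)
Your argument is correct and follows essentially the same strategy as the paper: a Fourier-side averaging in $h$ to pass from equicontinuity to tail decay, and a frequency splitting at $|\xi|=\kappa$ for the converse and for part (ii). Two minor differences are worth noting. First, for the averaging step the paper integrates against the exponential weight $\kappa e^{-2\kappa|h|}$ rather than the uniform measure on $[-\delta,\delta]$; this yields the exact identity $\int |e^{i\xi h}-1|^2\,\kappa e^{-2\kappa|h|}\,dh = \tfrac{2\xi^2}{\xi^2+4\kappa^2}$, which dovetails with the $H^{-1}_\kappa$ norms used elsewhere in the paper, but for the purposes of this lemma your sinc computation works equally well. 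Second, for the reverse implication in (ii) the paper avoids your subsequence/Fatou detour by directly showing $(q_n)$ is Cauchy in $H^s$: one splits $\|q_n-q_m\|_{H^s}^2$ at $|\xi|=\kappa$, controls the low-frequency piece by $(\kappa^2+4)^{s-\sigma}\|q_n-q_m\|_{H^\sigma}^2$, and controls the high-frequency piece by the uniform tail bound \eqref{E:equi2} (which, as you implicitly use, holds without first knowing $H^s$-boundedness). This is a bit more economical than identifying the limit via Fatou, but your route is perfectly valid.
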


\begin{proof}
As $Q$ is bounded and
\begin{align*}
\int |e^{i\xi h}-1|^2 |\hat q(\xi)|^2 (\xi^2+4)^s \,d\xi &\lesssim \kappa^2 h^2 \int |\hat q(\xi)|^2 (\xi^2+4)^s \,d\xi \\
&\qquad  + \int_{|\xi|>\kappa} |\hat q(\xi)|^2 (\xi^2+4)^s \,d\xi,
\end{align*}
we see that \eqref{E:equi2} implies \eqref{E:equi1}.  To prove the converse, we note that
\begin{align*}
\int |e^{i\xi h}-1|^2\, \kappa e^{-2\kappa|h|}\,dh &= \tfrac{2\xi^2}{\xi^2+4\kappa^2} \gtrsim 1 - \chi_{[-\kappa,\kappa]}(\xi)
\end{align*}
and hence
\begin{align*}
\int\, \| q(x+h) - q(x) \|_{H^s(\R)}^2 \kappa e^{-2\kappa|h|}\,dh \gtrsim \int_{|\xi|>\kappa} |\hat q(\xi)|^2 (\xi^2+4)^s \,d\xi.
\end{align*}

Let us now turn attention to (ii).  As the forward implication is trivial, we need only consider sequences $q_n$ that are convergent in $H^\sigma(\R)$ and equicontinuous in $H^s(\R)$.  But then writing
\begin{align*}
\int |\hat q_n(\xi) - \hat q_m(\xi)|^2 (\xi^2+4)^s \,d\xi &\leq (\kappa^2+4)^{s-\sigma} \int |\hat q_n(\xi) - \hat q_m(\xi)|^2 (\xi^2+4)^\sigma \,d\xi \\
&\qquad + \int_{|\xi|>\kappa} |\hat q_n(\xi) - \hat q_m(\xi)|^2 (\xi^2+4)^s \,d\xi
\end{align*}
and employing \eqref{E:equi2}, we see that the sequence is Cauchy in $H^{s}(\R)$ and so convergent there also.
\end{proof}

It is now easy to see that equicontinuity in $H^{-1}(\R)$ is readily accessible through the conserved quantity $\alpha(\kappa;q)$:

\begin{lemma}\label{L:equi 2}
A subset $Q$ of $B_\delta$ is equicontinuous in $H^{-1}(\R)$ if and only if
\begin{gather}
\kappa \alpha(\kappa;q) \to 0  \quad\text{as $\kappa\to \infty$, uniformly for $q\in Q$.} \label{E:equi3}
\end{gather}
\end{lemma}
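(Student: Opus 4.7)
\medskip

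The strategy is to use the equivalence \eqref{alpha as I2} to translate the condition \eqref{E:equi3} into a statement about the Fourier integral
$$
I(\kappa;q) := \int_\R \frac{|\hat q(\xi)|^2}{\xi^2+4\kappa^2}\,d\xi,
$$
and then compare $I(\kappa;q)$ with the tail integral $T(\kappa;q) := \int_{|\xi|\geq\kappa} |\hat q(\xi)|^2 (\xi^2+4)^{-1}\,d\xi$ appearing in Lemma~\ref{L:equi 1}(i) applied with $s=-1$. So the whole task is to show that $T(\kappa;q)\to 0$ uniformly on $Q$ if and only if $I(\kappa;q)\to 0$ uniformly on $Q$, given that $Q\subset B_\delta$ is bounded in $H^{-1}$.

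For the implication $(\ref{E:equi3})\Rightarrow$ equicontinuity, I would observe that for $|\xi|\geq\kappa\geq 1$ one has $\xi^2+4\kappa^2 \leq 5\xi^2 \leq 5(\xi^2+4)\cdot\tfrac{\xi^2}{\xi^2+4}\leq 5(\xi^2+4)$, hence $(\xi^2+4)^{-1}\leq 5(\xi^2+4\kappa^2)^{-1}$, so
$$
T(\kappa;q) \leq 5 \int_{|\xi|\geq\kappa} \frac{|\hat q(\xi)|^2}{\xi^2+4\kappa^2}\,d\xi \leq 5\, I(\kappa;q) \lesssim \kappa\,\alpha(\kappa;q)
$$
by \eqref{alpha as I2}. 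Thus uniform decay of $\kappa\alpha(\kappa;q)$ gives uniform decay of $T(\kappa;q)$, which is equicontinuity via Lemma~\ref{L:equi 1}(i).

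For the converse, I would split $I(\kappa;q)$ at a large frequency $M$. The low-frequency piece is controlled by the uniform $H^{-1}$ bound:
$$
\int_{|\xi|\leq M} \frac{|\hat q(\xi)|^2}{\xi^2+4\kappa^2}\,d\xi \leq \frac{M^2+4}{4\kappa^2}\int_{|\xi|\leq M}\frac{|\hat q(\xi)|^2}{\xi^2+4}\,d\xi \leq \frac{M^2+4}{4\kappa^2}\,\delta^2,
$$
while the high-frequency piece is dominated by the tail:
$$
\int_{|\xi|>M} \frac{|\hat q(\xi)|^2}{\xi^2+4\kappa^2}\,d\xi \leq \int_{|\xi|>M}\frac{|\hat q(\xi)|^2}{\xi^2+4}\,d\xi = T(M;q).
$$
Given $\eps>0$, equicontinuity produces $M$ so that the second piece is below $\eps$ uniformly on $Q$; then taking $\kappa$ large enough forces the first piece below $\eps$ as well. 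Invoking \eqref{alpha as I2} once more, $\kappa\alpha(\kappa;q)\lesssim I(\kappa;q)\to 0$ uniformly on $Q$.

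No step here looks hard; the only thing to be careful about is that \eqref{alpha as I2} is an equivalence (both directions with constants independent of $q\in B_\delta$ and $\kappa\geq 1$), which is exactly what the proposition needs in order to transfer the two implications cleanly between $\alpha$ and the Fourier integral $I$.
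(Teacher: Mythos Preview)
Your proposal is correct and follows essentially the same approach as the paper: reduce via \eqref{alpha as I2} to comparing $I(\kappa;q)$ with the tail $T(\kappa;q)$, obtain $T(\kappa;q)\lesssim I(\kappa;q)$ directly for one implication, and for the converse split $I(\kappa;q)$ at an auxiliary frequency cutoff (your $M$, the paper's $\vk$) using boundedness of $Q$ on the low piece and equicontinuity on the high piece. The only differences are cosmetic (choice of constants and the explicit $5$ versus $\lesssim$).
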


\begin{proof}
By virtue of \eqref{alpha as I2}, it suffices to show that $Q$ is equicontinuous in $H^{-1}(\R)$ if and only if
\begin{gather}
\lim_{\kappa\to\infty} \ \sup_{q\in Q}\ \int_{\R} \frac{|\hat q(\xi)|^2}{\xi^2+4\kappa^2} \,d\xi = 0. \label{E:equi3'}
\end{gather}

That \eqref{E:equi3'} implies \eqref{E:equi2} and hence equicontinuity follows immediately from
\begin{align*}
\int_{|\xi|\geq \kappa} \frac{|\hat q(\xi)|^2}{\xi^2+4} \,d\xi \lesssim \int_{\R} \frac{|\hat q(\xi)|^2}{\xi^2+4\kappa^2} \,d\xi .
\end{align*}
On the other hand, \eqref{E:equi2} implies \eqref{E:equi3'} by virtue of the boundedness of $Q$ and
\begin{align*}
\int \frac{|\hat q(\xi)|^2}{\xi^2+4\kappa^2} \,d\xi &\lesssim \tfrac{\vk^2}{\kappa^2} \int \frac{|\hat q(\xi)|^2}{\xi^2+4} \,d\xi + \int_{|\xi|>\vk} \frac{|\hat q(\xi)|^2 \,d\xi}{\xi^2+4}.
\qedhere
\end{align*}
\end{proof}

From the preceding lemma and the conservation of $\alpha(\kappa)$ we readily deduce the following:

\begin{prop}\label{P:equi}
Let $Q\subset B_\delta$ be a set of Schwartz functions that is equicontinuous in $H^{-1}(\R)$.  Then
\begin{align}\label{Q star}
Q^* = \bigl\{ e^{J\nabla(t H_\text{KdV} + s H_\kappa)} q : q\in Q,\ t,s\in \R,\text{ and } \kappa\geq 1 \bigr\}
\end{align}
is equicontinuous in $H^{-1}(\R)$.  By virtue of this,
\begin{align}\label{uniform to q}
4\kappa^3\bigl[ \tfrac1{2\kappa} - g(x;\kappa,q) \bigr] \to q \quad\text{in $H^{-1}(\R)$ as $\kappa\to\infty$},
\end{align}
uniformly for $q\in Q^*$.
\end{prop}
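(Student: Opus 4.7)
The plan is to address the two assertions in turn.  For the equicontinuity of $Q^*$, I would invoke Lemma~\ref{L:equi 2}, which reduces the task to showing $\vk\alpha(\vk;\tilde q)\to 0$ as $\vk\to\infty$, uniformly for $\tilde q\in Q^*$.  The key observation is that $\alpha(\vk;\cdot)$ is conserved under the KdV flow, by \eqref{E:l5.1z}, and under each $H_\kappa$ flow, by Proposition~\ref{P:H kappa}.  Since these two flows commute (also shown in Proposition~\ref{P:H kappa}), every element of $Q^*$ equals $e^{J\nabla(tH_\text{KdV}+sH_\kappa)}q_0$ for some $q_0\in Q$ and parameters $t,s\in\R$, $\kappa\geq 1$, and successive application of the conservation laws yields $\alpha(\vk;\tilde q)=\alpha(\vk;q_0)$ for every $\vk\geq 1$; the desired uniform vanishing then follows from the equicontinuity of $Q$.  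A preliminary check that $Q^*\subset B_\delta$, so that $\alpha(\vk;\cdot)$ is defined, follows from the same conservation applied at $\vk=1$ together with \eqref{alpha as I2}.

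For the convergence, my plan is to use the decomposition
\[
4\kappa^3\bigl[\tfrac{1}{2\kappa}-g(x;\kappa,q)\bigr]-q \;=\; \bigl[4\kappa^2\,R_0(2\kappa)\,q - q\bigr] \;-\; 4\kappa^3\,\mathcal R, \qquad \mathcal R := g-\tfrac{1}{2\kappa}+\tfrac{1}{\kappa}R_0(2\kappa)q,
\]
which splits off the leading $\ell=1$ term of the series \eqref{E:g series} from its tail $\mathcal R=\sum_{\ell\geq 2}(-1)^\ell T_\ell$.  The first bracket is the Fourier multiplier $-\xi^2/(\xi^2+4\kappa^2)$ acting on $\hat q$; splitting the resulting $H^{-1}$ integral at a cutoff $|\xi|=\Lambda$ gives an estimate of the form $O(\Lambda^4/\kappa^4)+\omega_{Q^*}(\Lambda)$, where $\omega_{Q^*}(\Lambda)\to 0$ as $\Lambda\to\infty$ by the equicontinuity of $Q^*$ and Lemma~\ref{L:equi 1}; choosing $\Lambda$ large first and then $\kappa\to\infty$ gives uniform convergence to $0$.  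For the tail, the bound \eqref{E:L1 est} derived in the proof of Proposition~\ref{P:Intro rho} yields
\[
\|\mathcal R\|_{L^1(\R)} \;\lesssim\; \bigl\|\sqrt{R_0}\bigr\|_{op}^2\,\bigl\|\sqrt{R_0}\,q\,\sqrt{R_0}\bigr\|_{\I_2}^2 \;=\; \frac{\|q\|_{H^{-1}_\kappa}^2}{\kappa^3},
\]
and the elementary embedding $L^1(\R)\hookrightarrow H^{-1}(\R)$ (following from $|\hat f|\leq \|f\|_{L^1}/\sqrt{2\pi}$ together with $\int d\xi/(4+\xi^2)=\pi/2$) then produces $\|4\kappa^3\mathcal R\|_{H^{-1}}\lesssim \|q\|_{H^{-1}_\kappa}^2\approx\kappa\,\alpha(\kappa;q)$ via \eqref{alpha as I2}, which vanishes uniformly on $Q^*$ by the equicontinuity just established.

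The main technical difficulty I anticipate is obtaining a sharp enough $H^{-1}$ bound on $4\kappa^3\mathcal R$.  A naive Hilbert--Schmidt estimate on the individual terms $T_\ell$ of the series only yields $\|4\kappa^3 T_2\|_{H^{-1}}\lesssim \kappa^{1/2}\|q\|_{H^{-1}_\kappa}^2$, which falls short by a factor of $\kappa^{1/2}$: equicontinuity guarantees $\kappa\|q\|_{H^{-1}_\kappa}^2\to 0$ uniformly but not $\kappa^{3/2}\|q\|_{H^{-1}_\kappa}^2$.  The resolution is to pass through the $L^1$--$L^\infty$ pairing underlying \eqref{E:L1 est}, which exploits the operator bound $\|\sqrt{R_0}\,f\,\sqrt{R_0}\|_{op}\lesssim \|f\|_{L^\infty}/\kappa^2$ to gain a crucial extra $\kappa^{-2}$, and then to lift this $L^1$ bound to $H^{-1}$ via the embedding noted above.
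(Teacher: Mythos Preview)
Your proposal is correct and follows essentially the same approach as the paper: equicontinuity of $Q^*$ via Lemma~\ref{L:equi 2} and conservation of $\alpha(\vk;\cdot)$ under both flows, and the convergence \eqref{uniform to q} via the same decomposition into the $\ell=1$ Fourier multiplier piece plus the $L^1$-controlled tail from \eqref{E:L1 est}. The only cosmetic difference is in your treatment of the multiplier $4\kappa^2R_0(2\kappa)q-q$: you split at $|\xi|=\Lambda$ and send $\Lambda\to\infty$ then $\kappa\to\infty$, whereas the paper observes the one-line pointwise bound
\[
\|4\kappa^2R_0(2\kappa)q-q\|_{H^{-1}}^2=\int\frac{\xi^4|\hat q(\xi)|^2}{(\xi^2+4\kappa^2)^2(\xi^2+4)}\,d\xi\le\int\frac{|\hat q(\xi)|^2}{\xi^2+4\kappa^2}\,d\xi,
\]
which is directly controlled by \eqref{E:equi3'}.
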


\begin{proof}
By Lemma~\ref{L:equi 2} and \eqref{alpha as I2}, the boundedness and equicontinuity of $Q$ guarantees that $\alpha(\kappa;q)$ is uniformly bounded on $Q$ and that
$$
\lim_{\kappa\to\infty} \kappa \alpha(\kappa;q) = 0 \quad\text{uniformly for $q\in Q$.}
$$
But then since $\alpha(\kappa;q)$ is conserved under these flows, we may reverse this reasoning to deduce that $Q^*$ is equicontinuous as well.

Looking back to \eqref{E:L1 est}, \eqref{R I2}, and \eqref{alpha as I2}, we have
$$
\kappa^3 \bigl\| \tfrac1{2\kappa}  - g(\,\cdot\;;\kappa,q) - \tfrac1{\kappa} R_0(2\kappa) q\bigr\|_{L^1} \lesssim \kappa \alpha(\kappa;q),
$$
which converges to zero as $\kappa\to\infty$ uniformly for $q\in Q^*$ by \eqref{E:equi3}.  In this way, the proof of \eqref{uniform to q} is reduced to the simple calculation
$$
\| 4\kappa^2 R_0(2\kappa)q - q \|_{H^{-1}}^2 = \int \frac{\xi^4 |\hat q(\xi)|^2}{(\xi^2+4\kappa^2)^2}\,\frac{d\xi}{\xi^2+4} \leq \int \frac{|\hat q(\xi)|^2}{\xi^2+4\kappa^2}\,d\xi
$$
and \eqref{E:equi3'}.
\end{proof}

\section{Well-posedness}\label{S:5}

\begin{theorem}\label{T:converge}
Let $q_n(t)$ be a sequence of Schwartz solutions to \eqref{KdV} on the line and fix $T>0$.  If $q_n(0)$ converges in $H^{-1}(\R)$ then so does $q_n(t)$, uniformly for $t\in[-T,T]$.
\end{theorem}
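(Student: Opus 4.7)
By the KdV scaling symmetry $q(t,x)\mapsto\lambda^2 q(\lambda^3 t,\lambda x)$, which shrinks the $H^{-1}$ norm as $\lambda\to 0$, I would first reduce to the case where every $q_n(0)$ lies in $B_{\delta_0}$, so that Propositions~\ref{L:5.2} and~\ref{P:H kappa} apply and each trajectory $q_n(t)$ remains in $B_\delta$ throughout $[-T,T]$. Convergence of $\{q_n(0)\}$ in $H^{-1}$ gives its equicontinuity via Lemma~\ref{L:equi 1}(ii), and Proposition~\ref{P:equi} propagates this equicontinuity to the enlarged joint orbit
$$
Q^* = \bigl\{e^{J\nabla(tH_\text{KdV}+sH_\kappa)}q_n(0) : n\in\N,\ t,s\in[-T,T],\ \kappa\geq 1\bigr\}.
$$

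\textbf{Reduction via commuting flows.} Proposition~\ref{P:H kappa} shows that $\{H_\text{KdV},H_\kappa\}=0$ on Schwartz space, so the two flows commute there. Consequently,
$$
\Xi_\kappa^t := e^{-tJ\nabla H_\kappa}\circ e^{tJ\nabla H_\text{KdV}}
$$
is the Schwartz-class time-$t$ flow of the difference Hamiltonian $H_\text{KdV}-H_\kappa$, and $q_n(t)=e^{tJ\nabla H_\kappa}\bigl(\Xi_\kappa^t q_n(0)\bigr)$. Since $e^{tJ\nabla H_\kappa}$ is $H^{-1}$-Lipschitz on $B_\delta$ uniformly in $|t|\leq T$ by Proposition~\ref{P:H kappa}, a three-term triangle inequality reduces the theorem to the single claim
\begin{equation}\label{plan:key}
\lim_{\kappa\to\infty}\;\sup_{n,\ |t|\leq T}\;\bigl\|\Xi_\kappa^t q_n(0)-q_n(0)\bigr\|_{H^{-1}(\R)}=0.
\end{equation}

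\textbf{Weak bound plus equicontinuity.} Both $q_n(0)$ and $\Xi_\kappa^t q_n(0)$ lie in $Q^*$, so by Lemma~\ref{L:equi 1}(ii) it suffices to establish \eqref{plan:key} with $H^{-1}$ replaced by a weaker norm such as $H^{-3}$. By the fundamental theorem of calculus,
$$
\Xi_\kappa^t q - q = \int_0^t \partial_x\!\Bigl[\tfrac{\delta H_\text{KdV}}{\delta q} - \tfrac{\delta H_\kappa}{\delta q}\Bigr]\!(\Xi_\kappa^s q)\,ds,
$$
and the integrand equals, via \eqref{trivial delta} and \eqref{delta alpha}, $\partial_x\bigl[-r''+3r^2+16\kappa^5(\tfrac{1}{2\kappa}-g(\,\cdot\,;\kappa,r))-4\kappa^2 r\bigr]$ at $r=\Xi_\kappa^s q$. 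The first two terms of the Neumann expansion \eqref{E:g series} for $g$ cancel $-r''$ and $3r^2$ against the $16\kappa^5$ and $4\kappa^2$ pieces (the first via a Fourier calculation using $R_0(2\kappa)$, the second via the elementary triple integral of $e^{-\kappa(|x-y|+|y-z|+|z-x|)}$), leaving a Neumann-series tail beginning at cubic order in $r$. The Hilbert--Schmidt bound \eqref{R I2} controls this tail, and the tightness characterization of equicontinuity \eqref{E:equi3'} on $Q^*$ forces the corresponding factors $\|\sqrt{R_0}\,r\,\sqrt{R_0}\|_{\I_2}$ to decay to zero as $\kappa\to\infty$. The principal obstacle is precisely this uniformity in $\kappa$: a crude Hilbert--Schmidt bound yields only boundedness of the Neumann tail, and the requisite decay comes structurally from \eqref{E:equi3'}, so equicontinuity functions not merely as a device for transferring convergence between Sobolev scales but as the substantive mechanism driving \eqref{plan:key}.
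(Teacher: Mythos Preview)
Your overall architecture---scaling reduction, commuting flows, and the equicontinuity upgrade from a weak norm back to $H^{-1}$---matches the paper's exactly.  There are, however, two problems, one minor and one substantive.

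\textbf{The minor issue.}  You factor $q_n(t)=e^{tJ\nabla H_\kappa}\bigl(\Xi_\kappa^t q_n(0)\bigr)$ and then invoke the $H^{-1}$-Lipschitz bound for $e^{tJ\nabla H_\kappa}$.  But that Lipschitz constant is not uniform in $\kappa$ (the vector field contains $16\kappa^5 g'$), so your triangle inequality acquires a factor that blows up as $\kappa\to\infty$ and \eqref{plan:key} no longer suffices.  The fix is to reverse the order (which commutativity permits): write $q_n(t)=\Xi_\kappa^t\bigl(e^{tJ\nabla H_\kappa}q_n(0)\bigr)$ and take the supremum of $\|\Xi_\kappa^t q-q\|$ over all $q\in Q^*$, not just over $\{q_n(0)\}$.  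This is exactly the paper's \eqref{diff 1}--\eqref{56}.

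\textbf{The substantive gap.}  Your proof of \eqref{plan:key} attempts to bound the integrand
\[
\partial_x\bigl[-r''+3r^2-16\kappa^5\bigl(g(\,\cdot\,;\kappa,r)-\tfrac1{2\kappa}\bigr)-4\kappa^2 r\bigr]
\]
directly in $H^{-4}$ via the Neumann expansion of $g$.  This runs into two obstacles.  First, the term $3r^2$ has no uniform bound in any $H^{-s}$ when $r$ ranges over a set that is merely bounded and equicontinuous in $H^{-1}$; you are relying on an exact cancellation with $16\kappa^5 h_2$, but establishing that the \emph{difference} is small uniformly on $Q^*$ is itself a nontrivial task that you have not carried out (the appendix does this for $L^2$-equicontinuous sets, one full degree above where you are working).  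Second, for the cubic-and-higher tail, equicontinuity gives only $\|\sqrt{R_0}\,r\,\sqrt{R_0}\|_{\I_2}=o(\kappa^{-1/2})$, and even after exploiting the extra decay available from an $H^3$ test function, the prefactor $16\kappa^5$ overwhelms this: the $\ell=3$ term contributes on the order of $\kappa^5\cdot\kappa^{-3/2}\cdot o(\kappa^{-3/2})=o(\kappa^2)$, which does not decay.

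The paper sidesteps both obstacles by tracking not $q$ but the ``good unknown'' $\varkappa-\tfrac{1}{2g(\,\cdot\,;\varkappa)}$ at a \emph{fixed} auxiliary energy $\varkappa$.  Combining \eqref{E:l5.1g} with \eqref{H kappa flow g}, its time derivative under the difference flow is expressed entirely through $q$, $g(\,\cdot\,;\kappa)-\tfrac1{2\kappa}$, and $g(\,\cdot\,;\varkappa)-\tfrac1{2\varkappa}$---no $q^2$, no Neumann tail weighted by $\kappa^5$.  These three quantities are uniformly controlled in $H^{-1}$ (the first two via \eqref{uniform to q}, the third because $\varkappa$ is fixed), which makes the $H^{-2}$ bound immediate.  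One then upgrades to $H^{1}$ by equicontinuity of the corresponding set of $1/g(\,\cdot\,;\varkappa)$-values, and finally returns to $q$ via the diffeomorphism property.  This passage to the diagonal Green's function at an auxiliary energy is not a cosmetic simplification; it is the mechanism that keeps the nonlinearity from appearing explicitly and is the missing ingredient in your argument.
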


\begin{proof}
Let us first reduce to the case $q_n(0)\in B_\delta$ for any fixed $\delta>0$, which is required in order to apply many of the results of the previous sections.  This is easily handled by a simple scaling argument:
if $q(t,x)$ is a Schwartz solution to \eqref{KdV}, then so is
\begin{align}\label{q scaling}
q_\lambda(t,x) = \lambda^2 q(\lambda^3 t, \lambda x)
\end{align}
for any $\lambda>0$; moreover,
\begin{align}\label{H-1 scaling}
\| q_\lambda(0) \|_{H^{-1}(\R)}^2 = \lambda \int \frac{|\hat q(0,\xi)|^2\,d\xi}{\xi^2+4\lambda^{-2}},
\end{align}
which converges to zero as $\lambda\to 0$.  Although it is incidental to the current proof, let us note here that
\begin{equation}\label{G scaling}
\begin{gathered}
G(x,y;\kappa,q_\lambda)  = \lambda^{-1} G(\lambda x,\lambda y;\lambda^{-1}\kappa,q),\\
\rho(x;\kappa,q_\lambda) =\lambda\rho(\lambda x;\lambda^{-1}\kappa,q),\qtq{and} \alpha(\kappa;q_\lambda) = \alpha(\lambda^{-1}\kappa;q).
\end{gathered}
\end{equation}

By commutativity of the flows, we have
\begin{align*}
q_n(t) = e^{t J\nabla(H_\text{KdV} - H_\kappa)} \circ e^{t J\nabla H_\kappa} q_n(0).
\end{align*}
Thus, setting $Q=\{q_n(0)\}$ and defining $Q^*$ as in \eqref{Q star}, we have
\begin{align}
\sup_{|t|\leq T} \| q_n(t) - q_m(t) \|_{H^{-1}}  &\leq \sup_{|t|\leq T} \| e^{t J\nabla H_\kappa} q_n(0) - e^{t J\nabla H_\kappa} q_m(0) \|_{H^{-1}} \label{diff 1}\\
&\qquad + 2 \sup_{q\in Q^*} \sup_{|t|\leq T} \| e^{tJ\nabla (H_\text{KdV} - H_\kappa)} q - q \|_{H^{-1}}. \notag
\end{align}
Note that $Q^*$ is equicontinuous in $H^{-1}(\R)$; this follows from Proposition~\ref{P:equi}.

For fixed $\kappa$, the first term in RHS\eqref{diff 1} converges to zero as $n,m\to\infty$ due to the well-posedness of the $H_\kappa$ flow; see Proposition~\ref{P:H kappa}.  Thus, it remains to prove that
\begin{align}\label{56}
\lim_{\kappa\to\infty} \sup_{q\in Q^*} \ \sup_{|t|\leq T}\ \| e^{tJ\nabla (H_\text{KdV} - H_\kappa)} q - q \|_{H^{-1}} =0.
\end{align}

We prove \eqref{56} by considering the reciprocal of the diagonal Green's function at some fixed energy.  To this end, we fix $\varkappa\geq 1$ and adopt the following notations: given $q\in Q^*$ and $\kappa\geq \vk +1$,
$$
q(t) := e^{tJ\nabla (H_\text{KdV} - H_\kappa)} q \qtq{and} g(t,x;\varkappa) := g(x;\varkappa,q(t)).
$$
Note that $q(t)\in (Q^*)^*=Q^*$ for any $t\in\R$.

Combining \eqref{E:l5.1g} and \eqref{H kappa flow g}, we obtain
\begin{align*}
\tfrac{d\ }{dt}  \tfrac{1}{2g(t,x;\vk)} &= \Bigl\{ \tfrac{1}{g(t,x;\vk)} \Bigl( q(t,x) + \tfrac{4\kappa^5}{\kappa^2-\vk^2}\bigl[g(t,x;\kappa)-\tfrac{1}{2\kappa}\bigr]  -   \tfrac{4\vk^5}{\kappa^2-\vk^2} \bigl[ g(t,x;\vk) -\tfrac{1}{2\vk}\bigr]\Bigr)\Bigr\}'
\end{align*}
and thence
\begin{align*}
 \bigl\| \tfrac{d\ }{dt} \bigl(\varkappa - \tfrac{1}{2g(t;\vk)}\bigr) \bigr\|_{H^{-2}}
&\lesssim \bigl\| q(t,x) + 4\kappa^3\bigl[g(t,x;\kappa)-\tfrac{1}{2\kappa}\bigr]\bigr\|_{H^{-1}} \\
&\quad \ {} + \kappa \bigl\| g(t,x;\kappa)-\tfrac{1}{2\kappa}\bigr\|_{H^{-1}}  + \kappa^{-2} \bigl\| g(t,x;\varkappa)-\tfrac{1}{2\varkappa}\bigr\|_{H^{-1}}
\end{align*}
uniformly for $q\in Q^*$ and $\kappa\geq \varkappa+1$.  (The implicit constants here depend on $\varkappa$.)  But then, by the fundamental theorem of calculus and Proposition~\ref{P:equi},
\begin{align}
\lim_{\kappa\to\infty}\; \sup_{q\in Q^*}\ \sup_{|t|\leq T}\ \bigl\| \tfrac{1}{2g(t;\vk)} - \tfrac{1}{2g(0;\vk)} \bigr\|_{H^{-2}} =0.
\end{align}
In view of Lemma~\ref{L:equi 1}(ii), we may upgrade this convergence to
\begin{align}\label{60}
\lim_{\kappa\to\infty} \; \sup_{q\in Q^*} \ \sup_{|t|\leq T}\ \bigl\| \tfrac{1}{2g(t;\vk)} - \tfrac{1}{2g(0;\vk)} \bigr\|_{H^{1}} =0,
\end{align}
due to the equicontinuity of the set
$$
E:= \Bigl\{ \varkappa - \tfrac{1}{2g(x;\varkappa,q(t))} \in H^1(\R) : q\in Q^* \text{ and } t\in\R\Bigr\}
$$
in $H^1(\R)$.  This property of $E$ holds because, by the diffeomorphism property and the relation \eqref{translation identity}, it is equivalent to equicontinuity of $Q^*$.

Lastly, the diffeomorphism property shows that \eqref{60} implies \eqref{56} and so completes the proof of Theorem~\ref{T:converge}.
\end{proof}

The line case of Theorem~\ref{T:main} follows from the next corollary.  We then extend this to higher values of $s$.

\begin{corollary}\label{C:1}
The KdV equation is globally well-posed in $H^{-1}(\R)$ in the following sense:  The solution map extends (uniquely) from Schwartz space to a jointly continuous map
$$
\Phi:\R\times H^{-1}(\R)\to H^{-1}(\R).
$$
In particular, $\Phi$ has the group property: $\Phi(t+s)=\Phi(t)\circ \Phi(s)$.  Moreover, each orbit $\{\Phi(t,q) : t\in\R\}$ is bounded and equicontinuous in $H^{-1}(\R)$.  Concretely,
\begin{align}\label{global bound}
\sup_t \| q(t) \|_{H^{-1}(\R)} \lesssim \| q(0) \|_{H^{-1}(\R)}  + \| q(0) \|_{H^{-1}(\R)}^3 .
\end{align}
\end{corollary}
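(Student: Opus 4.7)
The plan is to extend the solution map from Schwartz space to $H^{-1}(\R)$ by density and Theorem~\ref{T:converge}, and then to verify each claimed property by approximation from the Schwartz case.

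First, given $q\in H^{-1}(\R)$, pick any sequence of Schwartz functions $q_n\to q$ in $H^{-1}(\R)$ and write $\Phi_n(t):=\Phi(t,q_n)$ for the corresponding unique global Schwartz solutions. Theorem~\ref{T:converge} says that for every $T>0$ the sequence $\Phi_n$ is Cauchy in $C([-T,T];H^{-1}(\R))$, so it has a limit, which I define to be $\Phi(\,\cdot\,,q)$. A standard interleaving argument shows that the limit is independent of the choice of approximating sequence; uniqueness of the continuous extension from the dense set $\mathcal S(\R)$ is then automatic. Continuity of $t\mapsto\Phi(t,q)$ for each fixed $q$ is inherited from uniform convergence of continuous functions. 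Joint continuity in $(t,q)$ reduces to the splitting
\begin{align*}
\Phi(t_k,p_k)-\Phi(t,q) = \bigl[\Phi(t_k,p_k)-\Phi(t_k,q)\bigr] + \bigl[\Phi(t_k,q)-\Phi(t,q)\bigr],
\end{align*}
in which the first term is controlled by a diagonal Schwartz approximation together with Theorem~\ref{T:converge}, and the second by continuity in $t$ just noted. The group property $\Phi(t+s,q)=\Phi(t,\Phi(s,q))$ is inherited from the Schwartz case by passing to limits on both sides, using the already-established continuous dependence on initial data to move the inner limit through $\Phi(t,\cdot)$.

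For equicontinuity of an orbit $\{\Phi(t,q):t\in\R\}$, I would exploit the elementary fact that any convergent sequence in $H^{-1}(\R)$ is equicontinuous in $H^{-1}(\R)$ (being uniformly tight on the Fourier side). Choosing Schwartz $q_n\to q$, Proposition~\ref{P:equi} (applied with $s=0$, using only the KdV flow) guarantees that the full family $\{\Phi(t,q_n):n\in\N,\,t\in\R\}$ is equicontinuous in $H^{-1}(\R)$. Since $\Phi(t,q)$ is a norm limit of elements of this set for each $t$, the orbit itself is equicontinuous, and in particular uniformly bounded.

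Finally, the a priori bound \eqref{global bound} is extracted from the Schwartz case and then passed through the $H^{-1}$ limit. For Schwartz data with $\|q(0)\|_{H^{-1}}\leq\delta_0$, conservation of $\alpha(1)$ together with the equivalence \eqref{alpha as I2} gives $\|q(t)\|_{H^{-1}}\lesssim\|q(0)\|_{H^{-1}}$ directly. For large data, apply the scaling \eqref{q scaling} with $\lambda=\delta_0^2/\|q(0)\|_{H^{-1}}^2\leq 1$; then \eqref{H-1 scaling} yields $\|q_\lambda(0)\|_{H^{-1}}\leq\delta_0$, so the small-data bound applies to $q_\lambda$, and the elementary inequality $\eta^2+4\lambda^2\geq\lambda^2(\eta^2+4)$ for $\lambda\leq 1$ lets one unscale to recover $\|q(t)\|_{H^{-1}}\lesssim\|q(0)\|_{H^{-1}}^3/\delta_0^2$. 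Combining the two regimes gives \eqref{global bound}. The argument contains no serious obstacle once Theorem~\ref{T:converge} is in hand; the most delicate piece of bookkeeping is the diagonal Schwartz selection used to establish joint continuity, which must be arranged so that Theorem~\ref{T:converge} is only ever applied to a single Schwartz-valued sequence at a time.
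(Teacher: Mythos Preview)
Your proof is essentially correct and follows the same route as the paper: define $\Phi$ by density via Theorem~\ref{T:converge}, deduce joint continuity by a diagonal Schwartz approximation, inherit the group property from Schwartz space, and obtain boundedness and equicontinuity of orbits from conservation of $\alpha(\kappa)$ plus scaling. Two small corrections are in order.

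First, the parenthetical ``and in particular uniformly bounded'' is false as stated: equicontinuity in $H^{-1}$ does not imply boundedness (take any family of functions whose Fourier transforms are supported in a fixed compact set but whose $H^{-1}$ norms blow up). Boundedness of the orbit must come from a separate argument, namely conservation of $\alpha(1)$ and \eqref{alpha as I2} for small data, extended by scaling --- exactly what you do in the final paragraph. Just don't claim it follows from equicontinuity.

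Second, Proposition~\ref{P:equi} is stated only for $Q\subset B_\delta$, so your invocation of it for arbitrary $q\in H^{-1}(\R)$ is incomplete. The fix is the same scaling \eqref{q scaling} you already use for \eqref{global bound}: rescale so that the approximating data lie in $B_\delta$, apply Proposition~\ref{P:equi} (or, as the paper does, appeal directly to conservation of $\alpha(\kappa)$ and Lemma~\ref{L:equi 2}), and then observe that equicontinuity is preserved under undoing the scaling. You should make this explicit for the equicontinuity claim, not only for the norm bound.
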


\begin{proof}
Given $q\in H^{-1}(\R)$, we may define $\Phi(t,q)$ by choosing some sequence of Schwartz solutions $q_n(t)$ with $q_n(0)\to q$ in $H^{-1}(\R)$ and then set
$$
\Phi(t,q)= \lim_{n\to\infty} q_n(t).
$$
By virtue of Theorem~\ref{T:converge}, this limit exists in $H^{-1}(\R)$,  it is independent of the sequence $q_n$, and the convergence is uniform on compact intervals of time.

Now consider a sequence $q_n\to q \in H^{-1}(\R)$ and fix $T>0$.  Theorem~\ref{T:converge} guarantees that there is a sequence of Schwartz solutions $\tilde q_n$ so that
$$
\sup_{|t|\leq T} \| \tilde q_n(t) - \Phi(t,q_n) \|_{H^{-1}} \to 0 \qtq{as $n\to\infty$.}
$$
But then $\tilde q_n(0) \to q$ in $H^{-1}$ and so Theorem~\ref{T:converge} implies
$$
\sup_{|t|\leq T} \| \tilde q_n(t) - \Phi(t,q) \|_{H^{-1}} \to 0 \qtq{as $n\to\infty$.}
$$
As each $\tilde q_n(t)$ is itself $H^{-1}(\R)$-continuous in time, this proves joint continuity of $\Phi$.

As $\Phi$ is continuous, the group property on $H^{-1}(\R)$ is inherited from that on Schwartz space.

For small initial data, boundedness and equicontinuity of orbits follows from conservation of $\alpha(\kappa)$, \eqref{alpha as I2}, and Lemma~\ref{L:equi 2}.  In fact, this argument shows that
$$
\sup_t \| q(t) \|_{H^{-1}(\R)} \lesssim \| q(0) \|_{H^{-1}(\R)} \qtq{for} q(0)\in B_\delta
$$
and $\delta>0$ sufficiently small.  Equicontinuity and \eqref{global bound} for large data then follow from the scaling transformation \eqref{q scaling}.
\end{proof}

\begin{corollary}\label{C:2}
The KdV equation is globally well-posed in $H^{s}(\R)$ for all $s\geq -1$.
\end{corollary}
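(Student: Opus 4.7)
The plan is to use Corollary~\ref{C:1} for existence, uniqueness, and $H^{-1}$ continuous dependence, and then to upgrade the topology of continuous dependence from $H^{-1}$ to $H^s$ by means of the conserved functional $\alpha(\kappa;q)$. Existence and uniqueness on $H^s$ are automatic via the embedding $H^s\hookrightarrow H^{-1}$ for $s\geq -1$. By Lemma~\ref{L:equi 1}(ii), the claim that $\Phi$ is jointly continuous on $\R\times H^s$ reduces to the following statement: whenever $q_n(0)\to q(0)$ in $H^s(\R)$, the orbits $\{\Phi(t,q_n(0)):n\in\N,\,|t|\leq T\}$ are equicontinuous in $H^s(\R)$ for every $T>0$.

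For the principal range $-1<s<0$ I would establish the quantitative equivalence
\begin{equation*}
\|q\|_{H^s(\R)}^2 \;\approx_s\; \int_1^\infty \kappa^{2s+2}\,\alpha(\kappa;q)\,d\kappa
\qquad\text{uniformly for } q\in B_\delta.
\end{equation*}
This follows from \eqref{alpha as I2} via Fubini combined with the Mellin-type bound
\begin{equation*}
\int_1^\infty \frac{\kappa^{2s+1}}{\xi^2+4\kappa^2}\,d\kappa \;\approx_s\; (4+\xi^2)^s,
\end{equation*}
obtained by the substitution $\kappa=u|\xi|/2$ and the observation that $\int_0^\infty u^{2s+1}/(1+u^2)\,du$ converges precisely on $s\in(-1,0)$. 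Localizing the integration ranges to tails $\{\kappa\geq R\}$ and $\{|\xi|\gtrsim R\}$ gives the analogous equivalence for the $H^s$-equicontinuity tail.

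With the equivalence in hand, the argument concludes as follows. After a scaling reduction via \eqref{q scaling} to ensure $q_n(0)\in B_\delta$ uniformly in $n$---which is permissible since $H^s\hookrightarrow H^{-1}$, and the transformation law for $\alpha$ under scaling is recorded in \eqref{G scaling}---convergence $q_n(0)\to q(0)$ in $H^s$ yields, by a standard $\varepsilon/3$ argument, that $\int_R^\infty\kappa^{2s+2}\alpha(\kappa;q_n(0))\,d\kappa\to 0$ as $R\to\infty$, uniformly in $n$. Since $\alpha(\kappa;\cdot)$ is conserved along Schwartz solutions of \eqref{KdV} (Proposition~\ref{L:5.2}), the same uniform tail bound persists for $q_n(t)$ with $|t|\leq T$, giving $H^s$-equicontinuity of the orbits. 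Invoking Lemma~\ref{L:equi 1}(ii) together with the $H^{-1}$ convergence supplied by Corollary~\ref{C:1} then yields the desired $H^s$ convergence.

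The case $s\geq 0$ fits into essentially the same framework, using conservation of momentum $P(q)$ at $s=0$ and the polynomial conservation laws of the KdV hierarchy at $s>0$; alternatively, one may appeal directly to the classical well-posedness results cited in the Introduction, since existence and uniqueness already come from Corollary~\ref{C:1}. The main obstacle I foresee in the argument for $-1<s<0$ is the careful bookkeeping of how the scaling \eqref{q scaling} interacts with $H^s$-equicontinuity for negative $s$, together with the uniformity in $\kappa$ of the equivalence above as $s$ approaches the endpoints; the clean scaling identity $\alpha(\kappa;q_\lambda)=\alpha(\lambda^{-1}\kappa;q)$ from \eqref{G scaling} is what allows the tail control to be transported faithfully between scales.
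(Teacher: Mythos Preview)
Your proposal is correct and takes essentially the same approach as the paper: both arguments integrate the equivalence \eqref{alpha as I2} against $\kappa^{2+2s}\,d\kappa$ and exploit conservation of $\alpha(\kappa;\cdot)$ to propagate $H^s$-equicontinuity, then invoke Lemma~\ref{L:equi 1}(ii). The only presentational difference is that the paper integrates over $[\kappa_0,\infty)$ with variable lower limit, obtaining directly
\[
\int_{\kappa_0}^\infty \alpha(\kappa;q)\,\kappa^{2+2s}\,d\kappa \approx \int |\hat q(\xi)|^2 (\xi^2+4\kappa_0^2)^s\,d\xi,
\]
so that sending $\kappa_0\to\infty$ is itself the equicontinuity criterion; this is slightly cleaner than your two-step formulation (norm equivalence with lower limit $1$, then a separate tail argument), but the substance is identical.
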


\begin{proof}
In view of the preceding, it suffices to prove an analogue of Theorem~\ref{T:converge} in $H^s(\R)$.  We shall content ourselves with the treatment of $s\in(-1,0)$ here, since we may do so in a simple and uniform manner; moreover, together with Corollary~\ref{C:1}, this covers all cases not previously known, namely, $s\in[-1,-\tfrac34)$.

Given Schwartz solutions $q_n(t)$ to \eqref{KdV} with $q_n(0)$ convergent in $H^{s}(\R)$ and $T>0$, we may apply Theorem~\ref{T:converge} to obtain convergence of $q_n(t)$ in $H^{-1}(\R)$, uniformly for $t\in[-T,T]$.  The goal is to upgrade this to uniform convergence in $H^s(\R)$.  In view of Lemma~\ref{L:equi 1}, this amounts to demonstrating $H^s(\R)$-equicontinuity of the set $\{q_n(t):n\in\N\text{ and } t\in[-T,T]\}$.

To prove equicontinuity, we employ the following trick we used in \cite{KVZ}:  Integrating both sides of \eqref{alpha as I2} against the measure $\kappa^{2+2s} \,d\kappa$ over the interval $[\kappa_0,\infty)$, we obtain
\begin{align}\label{int alpha}
\int_{\kappa_0}^\infty \alpha(\kappa;q) \kappa^{2+2s} \,d\kappa \approx \int |\hat q(\xi)|^2 (\xi^2+4\kappa_0^2)^s \,d\xi,
\end{align}
where the implicit constants depend only on $s$.  Notice that LHS\eqref{int alpha} is conserved by the flow and so, it follows that
\begin{align}\label{int alpha;}
\int |\hat q_n(t,\xi)|^2 (\xi^2+4\kappa_0^2)^s \,d\xi \approx \int |\hat q_n(0,\xi)|^2 (\xi^2+4\kappa_0^2)^s \,d\xi
\end{align}
uniformly for $n\in\mathbb{N}$ and $t\in\R$.  As the initial data $q_n(0)$ are $H^s(\R)$-convergent, they are $H^s(\R)$-equicontinuous and so RHS\eqref{int alpha;} converges to zero as $\kappa_0\to\infty$ uniformly in $n$.  But then LHS\eqref{int alpha;} converges to zero as $\kappa_0\to\infty$ uniformly in $n$, thus proving equicontinuity of $\{q_n(t) : n\in\mathbb{N}\text{ and } t\in\R\}$.
\end{proof}

\section{The periodic case}\label{S:periodic}

With the exception of Section~\ref{S:2}, very little of substance changes in carrying over the arguments presented so far to the case of KdV with initial data in $H^{-1}(\R/\Z)$.  Nevertheless, there are several reasons why we chose not to present both geometries simultaneously (as in our prior work \cite{KVZ}).  First and foremost, we avoid the necessity of continually interrupting the principal line of reasoning to discuss minor changes (often notational) associated to the two geometries.

Secondly, in the non-periodic setting, the scaling transformation \eqref{q scaling} allows us effortlessly to focus attention on small solutions, which manifests in the appearance of $\delta$ throughout our arguments thus far.  To overcome the lack of scaling-invariance in the periodic case, we follow the approach we used in \cite{KVZ}.  Although we still maintain that this is the best solution, we can attest that it burdens the exposition considerably.  Looking to \eqref{H-1 scaling} and \eqref{G scaling}, we see that rescaling $q$ transforms the parameter $\kappa$. Correspondingly, the smallness condition for $q$ can be replaced by a relation involving $\kappa$ and $q$.  On the other hand, many formulas become tremendously ugly if we do not employ the simplifications made possible by requiring $\kappa\geq 1$.  This reasoning leads to the following \emph{coupled} conditions on $q$ and $\kappa$ that we shall impose:
\begin{equation}\label{periodic smallness}
\kappa\geq 1 \qtq{and} \kappa^{-1/2} \| q \|_{H^{-1}(\R/\Z)} \leq \delta.
\end{equation}
Here $\delta>0$ remains our over-arching smallness parameter, whose value will be allowed to shrink as the argument progresses.  Concomitant with this, for fixed $\kappa\geq 1$, we define
\begin{equation}\label{periodic B delta}
B_{\delta,\kappa} := \{ q\in H^{-1}(\R/\Z) : \kappa^{-1/2} \| q \|_{H^{-1}(\R/\Z)} \leq \delta\bigr\}.
\end{equation}

To make the arguments as parallel as possible, we shall insist on working with a Lax operator
$$
L = -\partial_x^2 + q(x)
$$
(and its resolvent) acting on $L^2(\R)$ with periodic coefficients and \emph{not} as an operator on $L^2(\R/\Z)$.  This deviates from our treatment in \cite{KVZ}.

In light of our convention, $L$ is no longer a relatively Hilbert--Schmidt (or even relatively compact) perturbation of the case $q\equiv 0$.  As many arguments in Section~\ref{S:2} were founded on \eqref{R I2}, which is the quantitative expression of this, those arguments do not automatically carry over to the periodic case.  In Lemma~\ref{L:A.1} we obtain the key substitute for \eqref{R I2}.  We will then show how to use this to obtain the analogues of the results from Section~\ref{S:2} in the periodic setting.

By comparison, Section~\ref{S:3} is almost devoid of estimates (and those that do appear are easily adapted).  Rather, it is preoccupied with identities that hold pointwise in space and so are immune to the ambient geometry.

Once we have proved \eqref{periodic alpha} below, everything in Section~\ref{S:4} carries over by simply replacing every instance of integration with respect to $\xi$ by summation over $\xi\in2\pi\Z$.

The principal difficulty in transferring the proof of Theorem~\ref{T:converge} to the circle case is the absence of the scaling symmetry \eqref{q scaling}; we have already explained how this can be avoided.  The only change needed for the treatment of the remaining results in Section~\ref{S:5}, namely, Corollaries~\ref{C:1} and~\ref{C:2}, is to employ \eqref{periodic alpha} whenever the original argument calls on \eqref{alpha as I2}.

Let us turn now to the central matter at hand, namely, obtaining analogues of the principal results of Section~\ref{S:2} in the periodic setting.

\begin{lemma}\label{L:A.1}
Fix $\psi\in C^\infty_c(\R)$.  If $q,f\in H^{-1}(\R/\Z)$, then
\begin{align}
\bigl\| \sqrt{R_0}\,q \sqrt{R_0} \bigr\|_{L^2(\R)\to L^2(\R)}^2 &\lesssim \kappa^{-1} \sum_{\xi\in2\pi\Z} \frac{|\hat q(\xi)|^2}{\xi^2+4\kappa^2},\label{A.1.1} \\
\bigl\| \sqrt{R_0}\, f\psi R_0 q \sqrt{R_0} \bigr\|_{\I_1(L^2(\R))} &\lesssim \kappa^{-1} \| f \|_{H^{-1}_\kappa(\R/\Z)} \| q \|_{H^{-1}_\kappa(\R/\Z)},\label{A.1.2}
\end{align}
both uniformly for $\kappa\geq 1$.
\end{lemma}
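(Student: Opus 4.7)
The plan is to treat the two estimates separately using the Fourier series of the periodic potentials. For (A.1.1), I would expand $q=\sum_{\xi\in 2\pi\Z}\hat q(\xi)\,e^{i\xi x}$ and observe that multiplication by $e^{i\xi x}$ is unitary on $L^2(\R)$, so $\sqrt{R_0}\,e^{i\xi x}\,\sqrt{R_0}$ acts on the Fourier side by $\hat u(\eta)\mapsto (\eta^2+\kappa^2)^{-1/2}((\eta-\xi)^2+\kappa^2)^{-1/2}\hat u(\eta-\xi)$. Its operator norm is therefore $\sup_\eta [(\eta^2+\kappa^2)((\eta-\xi)^2+\kappa^2)]^{-1/2}=4/(\xi^2+4\kappa^2)$, attained at $\eta=\xi/2$ by AM--GM. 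The triangle inequality then gives $\|\sqrt{R_0}\,q\,\sqrt{R_0}\|_{\text{op}}\leq\sum_\xi|\hat q(\xi)|\cdot 4/(\xi^2+4\kappa^2)$, and Cauchy--Schwarz combined with the integral comparison $\sum_{\xi\in 2\pi\Z}(\xi^2+4\kappa^2)^{-1}\lesssim\kappa^{-1}$ (valid for $\kappa\geq 1$) yields (A.1.1).

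For (A.1.2), the plan is to localize $q$ with respect to $\operatorname{supp}\psi$. Pick $\eta\in C_c^\infty(\R)$ with $\eta\equiv 1$ on a neighborhood of $\operatorname{supp}\psi$ and split $q=\eta q+(1-\eta)q$, giving $T=T_1+T_2$. For the local piece $T_1=\sqrt{R_0}\,f\psi\,R_0(\eta q)\sqrt{R_0}$, both $f\psi$ and $\eta q$ are compactly supported elements of $H^{-1}(\R)$, and writing $R_0=\sqrt{R_0}\cdot\sqrt{R_0}$ yields the genuine HS--HS factorization
\[T_1=\bigl(\sqrt{R_0}\,f\psi\,\sqrt{R_0}\bigr)\bigl(\sqrt{R_0}\,\eta q\,\sqrt{R_0}\bigr),\]
whose factors are each bounded in $\I_2$ by $\kappa^{-1/2}$ times the appropriate full-line $H^{-1}_\kappa(\R)$ norm via \eqref{R I2}. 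To convert these to the periodic norms appearing in the lemma, I would establish the auxiliary estimate $\|h\chi\|_{H^{-1}_\kappa(\R)}\lesssim_\chi\|h\|_{H^{-1}_\kappa(\R/\Z)}$ for $1$-periodic $h$ and $\chi\in C_c^\infty(\R)$: writing $\widehat{h\chi}(\eta')=\sum_\xi\hat h(\xi)\hat\chi(\eta'-\xi)$, this reduces to a Schur-test bound on the Gram-type matrix $\langle e^{i\xi x}\chi,e^{i\xi' x}\chi\rangle_{H^{-1}_\kappa(\R)}$, whose off-diagonal decay is guaranteed by the Schwartz decay of $\hat\chi$.

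For the far piece $T_2=\sqrt{R_0}\,f\psi\,R_0\bigl((1-\eta)q\bigr)\sqrt{R_0}$, the key new ingredient is that $\operatorname{supp}\psi$ and $\operatorname{supp}(1-\eta)$ are separated by a fixed distance $d>0$, so the resolvent kernel $G_0(x,y)=\tfrac{1}{2\kappa}e^{-\kappa|x-y|}$ supplies a bridge factor $\lesssim e^{-\kappa d}$ whenever it connects these two regions. Using an integer-translated partition of unity $(1-\eta)q=\sum_n\chi_n q$, with each $\chi_n q$ a translate of a fixed compactly supported $H^{-1}$ function of norm $\lesssim\|q\|_{H^{-1}_\kappa(\R/\Z)}$, each summand admits an HS--HS factorization whose two factors have kernels concentrated at distance $\sim n$ apart; the resulting product's trace norm thereby acquires an $e^{-\kappa n/2}$-type decay, making the sum over $n$ convergent and much smaller than the $T_1$ contribution for $\kappa\geq 1$. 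The main obstacle is precisely this last step: arranging the decomposition so that the exponential smallness of the bridge manifests itself through a product of two genuinely Hilbert--Schmidt factors rather than merely one HS factor paired with a bounded (but not HS) one, which would yield only an $\I_2$ rather than an $\I_1$ estimate.
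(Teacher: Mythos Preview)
Your argument for \eqref{A.1.1} is correct and complete, though it differs from the paper's. The paper uses the Floquet/direct-integral decomposition of $L^2(\R)$ over $\theta\in[0,2\pi)$, observes that $\sqrt{R_0}\,q\,\sqrt{R_0}$ acts fibrewise, and bounds the operator norm on each fibre by the Hilbert--Schmidt norm there, which is uniformly comparable to the right-hand side. Your route---triangle inequality over Fourier modes plus Cauchy--Schwarz---is more elementary and avoids the fibre machinery, at the modest cost of using the $\ell^1$ rather than $\ell^2$ structure of the Fourier coefficients.

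For \eqref{A.1.2}, your treatment of the near piece $T_1$ is fine, including the auxiliary estimate $\|h\chi\|_{H^{-1}_\kappa(\R)}\lesssim_\chi\|h\|_{H^{-1}_\kappa(\R/\Z)}$. The gap is exactly where you flag it: the far piece $T_2$. Factoring each summand as $\bigl(\sqrt{R_0}\,f\psi\,\sqrt{R_0}\bigr)\bigl(\sqrt{R_0}\,\chi_n q\,\sqrt{R_0}\bigr)$ gives two Hilbert--Schmidt factors whose $\I_2$ norms are uniform in $n$, and the inequality $\|AB\|_{\I_1}\leq\|A\|_{\I_2}\|B\|_{\I_2}$ does not see the spatial separation. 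Extracting the exponential decay of the intermediate $R_0$ kernel would require either inserting exponential weights through $\sqrt{R_0}$ (which is nonlocal and has no convenient kernel) or a more refined product estimate than HS$\times$HS. As written, the sum over $n$ does not converge.

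The paper sidesteps this entirely by replacing your partition of unity with a single weight. Write $R_0 q=\langle x\rangle R_0\,\langle x\rangle^{-1}q+[R_0,\langle x\rangle]\,\langle x\rangle^{-1}q$ and use the commutator identity $[R_0,\langle x\rangle]=\sqrt{R_0}\,A\,\sqrt{R_0}$ with $A$ bounded uniformly in $\kappa$ (this is \eqref{basic commutators}). The point is that $\langle x\rangle^{-1}q\in H^{-1}_\kappa(\R)$ with norm $\lesssim\|q\|_{H^{-1}_\kappa(\R/\Z)}$, so $\sqrt{R_0}\,\langle x\rangle^{-1}q\,\sqrt{R_0}$ is genuinely Hilbert--Schmidt on $L^2(\R)$ via \eqref{R I2}, and likewise $\sqrt{R_0}\,f\psi\langle x\rangle\,\sqrt{R_0}$ since $\psi\langle x\rangle\in C^\infty_c$. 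Both terms then factor cleanly as HS$\times$HS (with the bounded $A$ sandwiched in the second), yielding \eqref{A.1.2} in one stroke with no summation needed.
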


Note that $\I_1(L^2(\R))$ denotes the ideal of trace-class operators acting on the Hilbert space $L^2(\R)$.  Here and below, we simply use trace-class as a notational convenience for denoting operators representable as a product of Hilbert--Schmidt operators:
\begin{align}\label{I1 from I2}
\| B \|_{\I_1} = \inf \bigl\{ \|B_1\|_{\I_2}\|B_1\|_{\I_2} : B = B_1 B_2 \bigr\} .
\end{align}
For a proper discussion of trace-class, including the veracity of \eqref{I1 from I2}, see \cite{MR2154153}.

Before beginning the proof of Lemma~\ref{L:A.1}, we describe one more preliminary: Given $f\in L^2(\R)$ and $\theta\in[0,2\pi]$, we define
$$
f_\theta(x) = \sum_{\xi\in2\pi\Z} \hat f(\xi+\theta) e^{ix(\xi+\theta)},
$$
which may be regarded as a jointly square-integrable function of $x\in[0,1]$ and $\theta\in[0,2\pi]$.  Indeed,
$$
\int_{\R} |f(x)|^2\,dx = \int_0^{2\pi} \| f_\theta\|_{L^2([0,1])}^2\,d\theta.
$$
Moreover, any (pseudo)differential operator $L$ with $1$-periodic coefficients acts fibre-wise, which is to say it commutes with multiplication by any function of $\theta$.  Note that what we describe here is simply the standard direct integral representation of a periodic operator (cf. \cite[\S XIII.16]{MR0493421}).

\begin{proof}[Proof of Lemma~\ref{L:A.1}]
As the operator appearing in \eqref{A.1.1} is self-adjoint, it suffices to take $f\in L^2(\R)$ and consider
\begin{align*}
\langle f, \sqrt{R_0} q \sqrt{R_0} \,f\rangle_{L^2} = \int_0^{2\pi} \langle f_\theta, \mcM_\theta q \mcM_\theta f_\theta \rangle\,d\theta,
\end{align*}
where $\mcM_\theta :L^2([0,1])\to L^2([0,1])$ is defined via
$$
\mcM_\theta  : \sum_{\xi\in2\pi\Z} c_\xi e^{ix(\xi+\theta)} \mapsto \sum_{\xi\in2\pi\Z} \frac{c_\xi e^{ix(\xi+\theta)}}{\sqrt{(\xi+\theta)^2+\kappa^2}}.
$$
In this way, we see that
$$
\| \sqrt{R_0} q \sqrt{R_0} \|_{L^2(\R)\to L^2(\R)}  = \bigl\| \| \mcM_\theta q \mcM_\theta \|_{L^2([0,1])\to L^2([0,1])} \bigr\|_{L^\infty_\theta}.
$$
The estimate \eqref{A.1.1} now follows by bounding operator norms by Hilbert--Schmidt norms and the equivalence
\begin{align}\label{mcM norm}
\| \mcM_\theta q \mcM_\theta \|_{\I_2(L^2([0,1]))}^2  \approx \kappa^{-1} \sum_{\xi\in2\pi\Z} \frac{|\hat q(\xi)|^2}{\xi^2+4\kappa^2},
\end{align}
which is valid uniformly for $\theta\in[0,2\pi)$.

We turn now to \eqref{A.1.2}. From \eqref{basic commutators}, we find
\begin{align*}
\sqrt{R_0}\, f\psi R_0 q \sqrt{R_0}\, &= \sqrt{R_0}\, f\psi\langle x\rangle R_0 \langle x\rangle^{-1} q \sqrt{R_0}
	+ \sqrt{R_0}\, f\psi \sqrt{R_0}\, A \sqrt{R_0}\, \langle x\rangle^{-1} q \sqrt{R_0} \\
\text{with}\quad A &= \sqrt{R_0}\, \bigl(\tfrac{x}{\langle x\rangle} \partial_x + \partial_x\tfrac{x}{\langle x\rangle}\bigr) \sqrt{R_0}\,.
\end{align*}
Evidently, $A$ is an $L^2(\R)$-bounded operator.  From \eqref{R I2} we obtain
\begin{align*}
\bigl\| \sqrt{R_0}\, \langle x\rangle^{-1} q \sqrt{R_0} \bigr\|_{\I_2(L^2(\R))}
	&\lesssim \kappa^{-1/2} \bigl\| \langle x\rangle^{-1} q \bigr\|_{H^{-1}_\kappa(\R)} \lesssim \kappa^{-1/2} \bigl\| q \bigr\|_{H^{-1}_\kappa(\R/\Z)}
\end{align*}
and similarly,
\begin{align*}
\bigl\| \sqrt{R_0}\, f\psi \sqrt{R_0} \bigr\|_{\I_2(L^2(\R))} + \bigl\| \sqrt{R_0}\, f\psi\langle x\rangle\sqrt{R_0} \bigr\|_{\I_2(L^2(\R))}
\lesssim \kappa^{-1/2} \bigl\| f \bigr\|_{H^{-1}_\kappa(\R/\Z)}.
\end{align*}
Combining the preceding immediately yields \eqref{A.1.2}.
\end{proof}

\begin{prop}\label{P:periodic 1}
Let $q\in H^{-1}(\R/\Z)$.  There is a unique self-adjoint operator $L$ acting on $L^2(\R)$ associated to the semi-bounded quadratic form
$$
\psi \mapsto \int_{\R} |\psi'(x)|^2 + q(x) |\psi(x)|^2\,dx .
$$
Furthermore, there exists $\delta>0$, so that if $q$ and $\kappa$ obey \eqref{periodic smallness}, then the resolvent $R:=(L+\kappa^2)^{-1}$ admits a continuous integral kernel $G(x,y;\kappa,q)$ given by the uniformly convergent series
\begin{align}\label{E:periodic G}
G(x,y;\kappa, q) = \tfrac{1}{2\kappa} e^{-\kappa|x-y|} + \sum_{\ell=1}^\infty (-1)^\ell \Bigl\langle \sqrt{R_0}\, \delta_x, \Bigl(\!\sqrt{R_0}\,q \sqrt{R_0}\Bigr)^\ell \sqrt{R_0}\,\delta_y\Bigr\rangle.
\end{align}
\end{prop}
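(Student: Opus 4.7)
The plan is to import the proofs of Proposition~\ref{P:sa L} and Proposition~\ref{P:diffeo}(i) essentially verbatim, substituting the operator-norm bound \eqref{A.1.1} of Lemma~\ref{L:A.1} for every appeal to the Hilbert--Schmidt estimate \eqref{R I2}. The one conceptual adaptation is that $\sqrt{R_0}\,q\,\sqrt{R_0}$ is not Hilbert--Schmidt on $L^2(\R)$ when $q$ is periodic, so we must be content with operator-norm estimates throughout; Lemma~\ref{L:A.1} has already absorbed this difficulty, so the remaining work is largely notational. The only step that requires any real care is interpreting the form pairing $\int q|\psi|^2\,dx$ when $q$ is a periodic distribution on the whole line and $\psi\in H^1(\R)$ is not periodic; I will define it via the bounded operator $\sqrt{R_0}\,q\,\sqrt{R_0}$, which agrees with the naive definition whenever $q$ is smooth.

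For the first assertion, \eqref{A.1.1} combined with $\|\cdot\|_{H^{-1}_\kappa(\R/\Z)}\leq\|\cdot\|_{H^{-1}(\R/\Z)}$ for $\kappa\geq 1$ yields
\[
\bigl\|\sqrt{R_0}\,q\,\sqrt{R_0}\bigr\|_{L^2(\R)\to L^2(\R)}\lesssim \kappa^{-1/2}\|q\|_{H^{-1}(\R/\Z)}\longrightarrow 0\quad\text{as }\kappa\to\infty.
\]
Setting
\[
\int q(x)|\psi(x)|^2\,dx := \bigl\langle \sqrt{R_0}^{-1}\bar\psi,\ \sqrt{R_0}\,q\,\sqrt{R_0}\,\sqrt{R_0}^{-1}\psi\bigr\rangle
\]
and using $\|\sqrt{R_0}^{-1}\psi\|_{L^2}^2 = \|\psi\|_{H^1_\kappa(\R)}^2$, this translates into infinitesimal form boundedness of $q$ relative to $-\partial_x^2$ on $L^2(\R)$. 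The KLMN theorem (\cite[Theorem X.17]{MR0493420}) then delivers the self-adjoint, semi-bounded operator $L$ with form domain $H^1(\R)$, exactly as in Proposition~\ref{P:sa L}.

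For the resolvent, I would choose $\delta>0$ small enough that the implicit constant in \eqref{A.1.1} together with \eqref{periodic smallness} force $\|\sqrt{R_0}\,q\,\sqrt{R_0}\|_{\text{op}}\leq\tfrac12$. The operator factorization $L+\kappa^2 = \sqrt{R_0}^{-1}(1+\sqrt{R_0}\,q\,\sqrt{R_0})\sqrt{R_0}^{-1}$ and a Neumann expansion of the middle factor then yield
\[
R = \sum_{\ell=0}^{\infty}(-1)^\ell\sqrt{R_0}\,\bigl(\sqrt{R_0}\,q\,\sqrt{R_0}\bigr)^\ell\sqrt{R_0},
\]
norm-convergent on $L^2(\R)$. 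To read off the integral kernel \eqref{E:periodic G}, I will exploit that $\sqrt{R_0}\,\delta_x\in L^2(\R)$ with $\|\sqrt{R_0}\,\delta_x\|_{L^2}^2 = G_0(x,x;\kappa)= \tfrac{1}{2\kappa}$ and that $x\mapsto \sqrt{R_0}\,\delta_x$ is $L^2$-continuous, since
\[
\bigl\|\sqrt{R_0}(\delta_x-\delta_{x'})\bigr\|_{L^2}^2 = \tfrac{1}{\kappa}\bigl(1-e^{-\kappa|x-x'|}\bigr)\longrightarrow 0.
\]
Each summand $\langle \sqrt{R_0}\,\delta_x,(\sqrt{R_0}\,q\,\sqrt{R_0})^\ell\sqrt{R_0}\,\delta_y\rangle$ is therefore continuous in $(x,y)$ and dominated by $\tfrac{1}{2\kappa}\cdot 2^{-\ell}$, so \eqref{E:periodic G} converges uniformly on $\R\times\R$ to a continuous limit. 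A routine density argument — pairing the operator series against Schwartz test functions and interchanging the sum with integration — identifies this limit as the integral kernel of $R$, completing the proof.
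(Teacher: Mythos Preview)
Your proof is correct and follows essentially the same route as the paper: infinitesimal form-boundedness from \eqref{A.1.1} and KLMN for the self-adjoint operator, then operator-norm Neumann expansion together with the $L^2$-continuity of $x\mapsto\sqrt{R_0}\,\delta_x$ to obtain uniform convergence and continuity of the kernel. You have simply written out a few details (the explicit form pairing and the continuity computation) that the paper's proof leaves to the reader.
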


\begin{proof}
Regarding the existence and uniqueness of $L$, we note that \eqref{A.1.1} guarantees that $q$ is an infinitesimally form bounded perturbation and then apply \cite[Theorem~X.17]{MR0493420}.  This is the same argument used in the proof of Proposition~\ref{P:sa L}.

Using Plancherel, it is easy to check that $x\mapsto \sqrt{R_0}\delta_x$ is H\"older-continuous as a map from $\R$ to $L^2(\R)$.  Thus, convergence of the series \eqref{E:periodic G} and continuity of the result follows whenever
$\sqrt{R_0}\,q \sqrt{R_0}$ is a contraction; this in turn follows from \eqref{periodic smallness} and \eqref{A.1.1} when $\delta$ is small enough.
\end{proof}

We define $g(x;\kappa,q)$ and $\rho(x;\kappa,q)$ exactly as in Section~\ref{S:2}; see \eqref{g defn} and \eqref{E:rho defn}.  Let us now demonstrate their basic properties:

\begin{prop}\label{P:periodic 2}
There exists $\delta>0$, so that the following are true for all $\kappa\geq 1:$\\
(i) The mappings
\begin{align}\label{periodic diffeos}
q\mapsto g-\tfrac1{2\kappa} \qtq{and} q\mapsto \kappa-\tfrac1{2g}
\end{align}
are (real analytic) diffeomorphisms of $B_{\delta,\kappa}$ into $H^1(\R/\Z)$.\\
(ii) For every $q\in B_{\delta,\kappa}$ and every integer $s\geq 0$,
\begin{align}\label{periodic stronger}
\|g'(x)\|_{H^{s}(\R/\Z)}  \lesssim_s \| q \|_{H^{s-1}(\R/\Z)} .
\end{align}
(iii) For every $q\in B_{\delta,\kappa}$, $\rho(x;q)$ is non-negative and in $H^1(\R/\Z)$. Moreover,
\begin{align}\label{periodic alpha}
\alpha(\kappa,q):=\int_0^1 \rho(x)\,dx \approx \kappa^{-1} \sum_{\xi\in 2\pi\Z} \frac{|\hat q(\xi)|^2}{\xi^2+4\kappa^2},
\end{align}
uniformly for $\kappa\geq 1$ and $q\in B_{\delta,\kappa}$.
\end{prop}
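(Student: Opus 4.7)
The plan is to transcribe the proofs of Propositions~\ref{P:diffeo} and~\ref{P:Intro rho}, using Lemma~\ref{L:A.1} in place of the Hilbert--Schmidt estimate \eqref{R I2}.  The main obstacle is that when $q$ is periodic the operator $\sqrt{R_0}\,q\sqrt{R_0}$ is no longer Hilbert--Schmidt on $L^2(\R)$, and the traces by which one would evaluate $g(x)$, $\rho(x)$, and their functional derivatives no longer make sense in their line-case form.  The remedy is to introduce a fixed cutoff $\psi\in C^\infty_c(\R)$ with $\psi\equiv 1$ on $[0,1]$ whenever one pairs a diagonal quantity with a periodic test function $h$ (so that $h\chi_{[0,1]}=h\chi_{[0,1]}\psi$), and then cyclically rearrange each term of the Neumann series \eqref{E:periodic G} so that $\sqrt{R_0}\,h\psi R_0 q\sqrt{R_0}$ appears as a single trace-class factor governed by \eqref{A.1.2}, while the remaining copies of $\sqrt{R_0}\,q\sqrt{R_0}$ are controlled in operator norm by \eqref{A.1.1}.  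Throughout, the coupled bound \eqref{periodic smallness} together with $\|q\|_{H^{-1}_\kappa(\R/\Z)}\leq\|q\|_{H^{-1}(\R/\Z)}$ (valid for $\kappa\geq 1$) gives $\|\sqrt{R_0}\,q\sqrt{R_0}\|_{\mathrm{op}}\lesssim\delta$, so all Neumann expansions converge geometrically; periodicity of $g$ and $\rho$ is then automatic from $G(x+1,y+1;\kappa,q)=G(x,y;\kappa,q)$, which reflects translation invariance of $L$ by the period.

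For part~(i), this scheme yields by duality the bound
$$
\bigl\|g-\tfrac{1}{2\kappa}\bigr\|_{H^1_\kappa(\R/\Z)}\lesssim\kappa^{-1}\|q\|_{H^{-1}_\kappa(\R/\Z)}
$$
together with real analyticity of $q\mapsto g-\tfrac{1}{2\kappa}$.  The same cyclic-trace estimate applied termwise to the Neumann series for $dg$ produces the Lipschitz bound $\|dg|_q-dg|_{q=0}\|_{H^{-1}_\kappa\to H^1_\kappa}\lesssim\delta\kappa^{-1}$ uniformly in $\kappa\geq 1$.  Since $dg|_{q=0}=-\kappa^{-1}R_0(2\kappa)$ is an isomorphism $H^{-1}_\kappa(\R/\Z)\to H^1_\kappa(\R/\Z)$ of condition number one (with inverse of norm $\kappa$), the inverse function theorem applies uniformly in $\kappa$ for $\delta$ small enough, establishing the first diffeomorphism in \eqref{periodic diffeos}.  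The second follows from the first by composition with the M\"obius-type map $f\mapsto f/(1+f)$, exactly as on the line.

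For part~(ii), the translation identity \eqref{translation identity} is unchanged for periodic $q$, and differentiating $s$ times in $h$ at $h=0$ expresses $\partial_x^s g$ as a Neumann series whose $\ell$-th term involves $\ell$ factors of $\sqrt{R_0}\,q^{(\sigma_k)}\sqrt{R_0}$.  Pairing with a periodic test function, redistributing derivatives via H\"older's inequality in Fourier variables, and invoking the cyclic-trace scheme from part~(i) reproduces the combinatorial argument behind \eqref{g stronger mapping}, with $\int_\R d\xi$ replaced by $\sum_{\xi\in 2\pi\Z}$; this yields \eqref{periodic stronger}.

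For part~(iii), the pointwise formulas \eqref{drho}--\eqref{ddrho} for $d\rho(x)$ and $d^2\rho(x)$ are purely algebraic in $G(x,y)$ and so transfer verbatim; in particular, the Cauchy--Schwarz argument from Proposition~\ref{P:Intro rho} gives non-negativity and convexity of $q\mapsto\rho(x)$, and $\tfrac{\delta\rho(x)}{\delta q}|_{q\equiv 0}=0$ continues to hold.  Real analyticity of $q\mapsto\alpha(\kappa;q)$ and the membership $\rho\in H^1(\R/\Z)$ both follow from the Neumann-series bounds of part~(i).  For the two-sided equivalence \eqref{periodic alpha}, the key step is to compute the Hessian at $q\equiv 0$: because $G$ at $q\equiv 0$ is the same convolution kernel in either geometry, the integrals in \eqref{ddrho} remain absolutely convergent for periodic $f$, and integrating the pointwise Hessian over $x\in[0,1]$ using $\int_0^1 e^{i(\eta_1+\eta_2)x}\,dx=\delta_{\eta_1+\eta_2,0}$ on $2\pi\Z$ reproduces, after the same cancellations as in \eqref{delta2alpha}, the identity
$$
d^2\alpha|_{q\equiv 0}(f,f)=\frac{1}{\kappa}\sum_{\xi\in 2\pi\Z}\frac{|\hat f(\xi)|^2}{\xi^2+4\kappa^2}.
$$
A Lipschitz bound on the Hessian, supplied by yet another application of Lemma~\ref{L:A.1}, then promotes this identity at $q\equiv 0$ to the two-sided equivalence \eqref{periodic alpha} throughout $B_{\delta,\kappa}$ whenever $\delta$ is small enough.
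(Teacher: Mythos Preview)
Your proposal follows the same overall strategy as the paper and is essentially correct, but there are two points where it diverges from (and is somewhat less clean than) the paper's argument.

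First, your localization is slightly muddled. You introduce $\psi\equiv 1$ on $[0,1]$ and write $h\chi_{[0,1]}=h\chi_{[0,1]}\psi$, but then claim the trace-class factor is $\sqrt{R_0}\,h\psi R_0 q\sqrt{R_0}$. These do not match: pairing the periodic quantity $g-\tfrac{1}{2\kappa}$ with periodic $h$ over $[0,1]$ corresponds, on the line, to the test function $h\chi_{[0,1]}$, not $h\psi$; and $h\chi_{[0,1]}$ is not of the form (periodic)$\times$(smooth cutoff) required by \eqref{A.1.2}. The paper avoids this by choosing $\psi$ to be a \emph{partition of unity}, $\sum_{k\in\Z}\psi(\cdot-k)\equiv 1$, which makes $\int_0^1 (g-\tfrac{1}{2\kappa})h\,dx=\int_\R (g-\tfrac{1}{2\kappa})\,h\psi\,dx$ an exact identity for periodic $h$. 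Your approach can be repaired (for instance by bounding $\|g-\tfrac{1}{2\kappa}\|_{H^1(\R/\Z)}\leq\|(g-\tfrac{1}{2\kappa})\psi\|_{H^1(\R)}$ and observing that the proof of \eqref{A.1.2} works equally well with $f\in H^{-1}(\R)$ in place of periodic $f$), but as written it does not quite close.

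Second, for part~(iii) you compute $d^2\alpha|_{q=0}$ by integrating the pointwise Hessian \eqref{ddrho} over $x\in[0,1]$. This does give the right answer (after expanding $f$ in its Fourier series and using $\int_0^1 e^{i(\eta_1+\eta_2)x}\,dx=\delta_{\eta_1+\eta_2,0}$), but it is not ``the same cancellations as in \eqref{delta2alpha}'': there the $x$-integral was over all of $\R$, and the resulting physical-space identities do not transfer directly. The paper instead first establishes the periodic analogue of \eqref{delta alpha}, namely $\delta\alpha/\delta q=\tfrac{1}{2\kappa}-g$, via the identity \eqref{peri red} (proved from the periodic version of Lemma~\ref{L:D 1/g}). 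This collapses $d^2\alpha|_{q=0}(f,f)$ to the single integral $\tfrac{1}{4\kappa^2}\int_0^1\!\int_\R e^{-2\kappa|x-y|}f(x)f(y)\,dy\,dx$, from which \eqref{periodic alpha} is immediate. Your route works but is computationally heavier; the paper's detour through \eqref{peri red} also streamlines the Lipschitz bound on the Hessian, since it reduces to \eqref{periodic dg in H1} rather than a third-order estimate on $\rho$.
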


\begin{proof}
First we show that $g\in H^{1}(\R/\Z)$.  That it is periodic is self-evident from \eqref{E:periodic G}. To estimate its norm, we pick $\psi\in C^\infty_c(\R)$ so that
$$
\sum_{k\in\Z} \psi(x-k) \equiv 1.
$$
The utility of this partition of unity for us stems from the duality relation
$$
\| h \|_{H^{1}_\kappa(\R/\Z)} = \sup \biggl\{ \int_\R h(x) \psi(x) f(x)\,dx : f\in C^\infty(\R/\Z) \text{ and }\|f\|_{H^{-1}_\kappa(\R/\Z)}\leq 1\biggr\}.
$$

Now given $f\in C^\infty(\R/\Z)$, from \eqref{E:periodic G} we obtain that
\begin{align}\label{dual g}
\int \bigl[g(x) -\tfrac1{2\kappa}\bigr] \psi(x) f(x) \,dx = \sum_{\ell=1}^\infty (-1)^\ell \tr\Bigl\{ \!\sqrt{R_0}\,f\psi \sqrt{R_0} \Bigl(\!\sqrt{R_0}\,q \sqrt{R_0}\Bigr)^\ell \Bigr\}
\end{align}
and thence, using \eqref{A.1.2}, \eqref{A.1.1}, and \eqref{periodic B delta},
\begin{align}\label{periodic g in H1}
\bigl\|g(x) -\tfrac1{2\kappa}\bigr\|_{H^1(\R/\Z)} \lesssim \kappa^{-1} \|q\|_{H^{-1}(\R/\Z)},
\end{align}
provided $\delta$ is chosen sufficiently small.  Moreover, this argument shows that the first mapping in \eqref{periodic diffeos} is real-analytic by directly proving convergence of the power series.

When combined with \eqref{translation identity}, the estimates just presented also lead to a proof of \eqref{periodic stronger}; for further details see the proof of Proposition~\ref{P:diffeo}.

We consider now the inverse mapping.  As in Section~\ref{S:2},
$$
\kappa \cdot dg\bigr|_{q\equiv 0} = - R_0(2\kappa)
$$
is a unitary map of $H^{-1}_\kappa(\R/\Z)$ onto $H^1_\kappa(\R/\Z)$.  Thus, by the inverse function theorem, $q\mapsto g-\tfrac1{2\kappa}$ is a diffeomorphism in some neighbourhood of zero.  We must verify that the inverse mapping extends to the whole of $B_{\delta,\kappa}$.  Differentiating \eqref{dual g} with respect to $q$ and applying \eqref{A.1.1} and \eqref{A.1.2} yields
\begin{align}\label{periodic dg in H1}
\Bigl\| dg -dg\bigr|_{q\equiv0} \Bigr\|_{H^{-1}_\kappa\to H^{\vphantom{+}1}_\kappa}  \lesssim \kappa^{-3/2} \|q\|_{H^{-1}(\R/\Z)},
\end{align}
which suffices for this task.

The diffeomorphism property extends from the first map in \eqref{periodic diffeos} to the second by exactly the same argument presented in the proof of Proposition~\ref{P:diffeo}.

We turn now to part (iii), focussing on \eqref{periodic alpha}.  As previously, we proceed by computing derivatives, beginning with
\begin{align}\label{peri red}
\frac{d\ }{ds}\biggr|_{s=0} \int_0^1 \frac{dy}{2g(y;q+sf)} = \int_0^1 g(x) f(x)\,dx.
\end{align}
This may be proved as follows: By the resolvent identity, periodicity, and Lemma~\ref{L:D 1/g},
\begin{align*}
\text{LHS\eqref{peri red}} &= \int_0^1 \int_\R \frac{G(y,x)f(x)G(x,y)}{2g(y)^2}\,dx\,dy \\
&= \sum_{k\in\Z} \int_0^1 \int_0^1 \frac{G(y,x+k)f(x+k)G(x+k,y)}{2g(y)^2}\,dx\,dy \\
&= \sum_{k\in\Z} \int_0^1 \int_0^1 \frac{G(y-k,x)f(x)G(x,y-k)}{2g(y-k)^2}\,dx\,dy \\
&= \int_0^1 \int_\R \frac{G(y,x)G(x,y)}{2g(y)^2}\,dy\,f(x)\,dx = \text{RHS\eqref{peri red}}.
\end{align*}

Beginning with \eqref{peri red} and using \eqref{O35pp} shows
\begin{align*}
d^2\alpha\bigr|_{q\equiv 0} (f,f) &= \tfrac{1}{4\kappa^2} \int_0^1 \!\! \int_\R e^{-2\kappa|y-z|} f(y) f(z) \,dy\,dz
	= \tfrac{1}{\kappa} \sum_{\xi\in 2\pi\Z} \frac{|\hat f(\xi)|^2}{\xi^2+4\kappa^2}.
\end{align*}
Relying also on \eqref{periodic dg in H1} yields
\begin{align*}
\Bigl| d^2\alpha(f,f) - d^2\alpha\bigr|_{q\equiv 0} (f,f)\Bigr| & \lesssim \kappa^{-3/2} \|q\|_{H^{-1}(\R/\Z)} \|f\|_{H^{-1}_\kappa(\R/\Z)}^2.
\end{align*}
In this way, we see that the power series expansion of $\alpha$ as a function of $q$ is dominated by its quadratic term throughout \eqref{periodic B delta}, thus proving \eqref{periodic alpha}.
\end{proof}

Proposition~\ref{P:periodic 2} contains no analogue of \eqref{O37}.  Unlike in the decaying case, the quantity defined in \eqref{periodic alpha} does not coincide precisely with the renormalized perturbation determinant considered in \cite{KVZ}.  To describe the connection, we must introduce several new objects.  Henceforth, we consider only $q\in C^\infty(\R/\Z)$.  Once we have established suitable identities in this setting, they may be extended via analyticity to $q$ that are merely $H^{-1}(\R/\Z)$.

Let $\mcR_0$ denote the resolvent associated to the Laplacian on $[0,1]$ with periodic boundary conditions; concretely,
$$
\mcR_0 : \sum_{\xi\in2\pi\Z} c_\xi e^{i\xi x} \mapsto \sum_{\xi\in2\pi\Z} \frac{c_\xi e^{i\xi x}}{\xi^2 + \kappa^2}.
$$
Note that this coincides with $\mcM_0^2$ where $\mcM_0$ is as in the proof of \eqref{A.1.1}.  Using \eqref{mcM norm}, we see that the resolvent of the operator $\mathcal{L}=-\partial_x^2+q$, acting on $L^2([0,1])$ with periodic boundary conditions, can be expanded in a convergent series
$$
\mcR = \mcR_0 + \sum_{\ell=1}^\infty (-1)^\ell \sqrt{\mcR_0} \left(\sqrt{\mcR_0}\, q \, \sqrt{\mcR_0}\right)^\ell \sqrt{\mcR_0}
$$
whenever $\kappa$ and $q$ satisfy \eqref{periodic smallness} for suitable $\delta>0$.  Moreover, the kernels of these operators can be found by the method of images:
\begin{align}\label{mcR0 kernel}
\langle \delta_x, \mcR_0\delta_y\rangle = \sum_{k\in\Z} \langle \delta_x, R_0\delta_{y+k}\rangle =  \tfrac{1}{2\kappa(1-e^{-\kappa})} \bigl[ e^{-\kappa\|x-y\|} + e^{-\kappa(1-\|x-y\|)} \bigr],
\end{align}
where $\|x-y\|=\dist(x-y,\Z)$, and similarly,
\begin{align}\label{mcR kernel}
\mathcal G(x,y) := \langle \delta_x, \mcR \delta_y\rangle = \sum_{k\in\Z} G(x,y+k).
\end{align}

The last object we need to define is the Lyapunov exponent.  Let $\psi_\pm(x;\kappa)$ be the Weyl solutions introduced earlier in this section.  Due to the periodicity of $q$, we see that $x\mapsto\psi_+(x+1)$ and $x\mapsto\psi_-(x+1)$ constitute equally good Weyl solutions and so must differ from the originals by numerical constants.  Noting the constancy of the Wronskian as well as the square-integrability constraint, we see
that there is a $\gamma=\gamma(\kappa)>0$ so that
$$
\psi_+(x+1;\kappa) = e^{-\gamma(\kappa)} \psi_+(x;\kappa) \qtq{and} \psi_-(x+1;\kappa) = e^{+\gamma(\kappa)} \psi_-(x;\kappa).
$$
This quantity $\gamma$ is known as the Lyapunov exponent.  Employing these relations to sum in \eqref{mcR kernel}, we deduce that
\begin{align}\label{mcR kernel'}
\mathcal G(x,x) = \frac{1+e^{-\gamma}}{1-e^{-\gamma}} G(x,x).
\end{align}

\begin{prop}  For $q\in C^\infty(\R/\Z)$ and $\kappa$ satisfying \eqref{periodic smallness}, we have
\begin{align}\label{Lyapunov formula}
  \gamma(\kappa) = \int_0^1 \frac{dx}{2g(x)},
\end{align}
\begin{align}\label{periodic trace}
\tr\left(\sqrt{\mcR_0}\, q \, \sqrt{\mcR_0} \right) = \tfrac{1+e^{-\kappa}}{1-e^{-\kappa}}  \int_0^1  \bigl[\tfrac{1}{2} e^{-2\kappa|\cdot|}*q\bigr](x) \,dx
    = \tfrac1{2\kappa}\,\tfrac{1+e^{-\kappa}}{1-e^{-\kappa}} \int_0^1  q(x) \,dx,
\end{align}
which is a Casimir, and
\begin{align}\label{periodic determinant}
  \log\det\left( 1 + \sqrt{\mcR_0}\, q \, \sqrt{\mcR_0} \right) = \log\bigl(e^{\gamma} - 2 + e^{-\gamma} \bigr) - \log\bigl(e^{\kappa} - 2 + e^{-\kappa} \bigr).
\end{align}
Here the trace and determinant are with respect to the Hilbert space $L^2(\R/\Z)$.
\end{prop}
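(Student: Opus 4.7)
The plan is to address the three identities in turn, exploiting Weyl solutions and the reciprocal relationship between $g(x)$ and $\gamma(\kappa)$.

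For \eqref{Lyapunov formula}, I would start from the Wronskian normalization $\psi_+\psi_-'-\psi_+'\psi_-=1$ (see \eqref{E:Wron}), which rearranges to
\begin{equation*}
\frac{1}{g(x)} = \frac{1}{\psi_+(x)\psi_-(x)} = \frac{\psi_-'(x)}{\psi_-(x)} - \frac{\psi_+'(x)}{\psi_+(x)}.
\end{equation*}
Integrating from $0$ to $1$ telescopes to $\tfrac12[\log\psi_-(1)-\log\psi_-(0)] - \tfrac12[\log\psi_+(1)-\log\psi_+(0)]$, and the defining relations $\psi_-(x{+}1)=e^{\gamma}\psi_-(x)$ and $\psi_+(x{+}1)=e^{-\gamma}\psi_+(x)$ collapse this to $\gamma(\kappa)$.

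For \eqref{periodic trace}, the cyclicity of the trace gives $\tr(\sqrt{\mcR_0}\,q\sqrt{\mcR_0}) = \tr(q\,\mcR_0) = \int_0^1 q(x)\,\mathcal G_0(x,x;\kappa)\,dx$. Evaluating the diagonal using \eqref{mcR0 kernel} at $x=y$ gives $\mathcal G_0(x,x;\kappa) = \tfrac{1+e^{-\kappa}}{2\kappa(1-e^{-\kappa})}$, a constant, and this delivers the rightmost equality. The middle expression then follows by unfolding the convolution: using periodicity of $q$ and Fubini, $\int_0^1[\tfrac12 e^{-2\kappa|\cdot|}*q](x)\,dx = \tfrac{1}{2\kappa}\int_0^1 q(y)\,dy$. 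The fact that this is a Casimir follows because it depends only on the zeroth Fourier mode of $q$, which Poisson-commutes with every functional (cf.\ the remark on $\int q$ in Section~\ref{S:3}).

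The central task is \eqref{periodic determinant}, which I would prove by differentiating both sides with respect to $q$ and checking equality at $q\equiv 0$. On the left, the standard formula $d\log\det(1+A) = \tr[(1+A)^{-1}dA]$ combined with the resolvent expansion gives, for any test $f$,
\begin{equation*}
d\bigl[\log\det(1{+}\sqrt{\mcR_0}\,q\sqrt{\mcR_0})\bigr](f) = \tr(f\mcR) = \int_0^1 \mathcal G(x,x)\,f(x)\,dx.
\end{equation*}
On the right, the elementary identity $\frac{e^{\gamma}-e^{-\gamma}}{e^{\gamma}-2+e^{-\gamma}} = \frac{1+e^{-\gamma}}{1-e^{-\gamma}}$ yields
\begin{equation*}
d\bigl[\log(e^{\gamma}-2+e^{-\gamma})\bigr](f) = \tfrac{1+e^{-\gamma}}{1-e^{-\gamma}}\, d\gamma(f),
\end{equation*}
and by \eqref{Lyapunov formula} together with the computation in \eqref{peri red}, we have $d\gamma(f) = \int_0^1 g(x)f(x)\,dx$. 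Invoking \eqref{mcR kernel'}, the right-hand side derivative equals $\int_0^1 \tfrac{1+e^{-\gamma}}{1-e^{-\gamma}}\,g(x)\,f(x)\,dx = \int_0^1\mathcal G(x,x)\,f(x)\,dx$, matching the left-hand side. Finally, at $q\equiv 0$ one has $\psi_\pm(x)=(2\kappa)^{-1/2}e^{\mp\kappa x}$, hence $\gamma(\kappa)=\kappa$, so both sides of \eqref{periodic determinant} vanish; by connectedness of $B_{\delta,\kappa}$ the identity extends globally.

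The main obstacle is not any single step, but the need to verify that the derivative computation on the left actually produces $\int_0^1 \mathcal G(x,x)f(x)\,dx$ rather than $\int_\R$; this rests on the cyclicity of the trace and the identification of $\tr(f\mcR)$ with the diagonal integrated over one period, which is exactly the content of \eqref{mcR kernel} and \eqref{mcR kernel'}. Once the bridge between the whole-line Green's function $g$ and the periodic operator's diagonal $\mathcal G(x,x)$ is in place (provided by \eqref{mcR kernel'}), the matching of derivatives is purely algebraic.
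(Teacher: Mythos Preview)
Your proposal is correct and follows essentially the same approach as the paper: the Lyapunov formula via the Wronskian identity for $\psi_\pm$, the trace via the explicit diagonal value of $\mathcal G_0$, and the determinant identity by matching derivatives (using \eqref{peri red} and \eqref{mcR kernel'}) together with the base case $q\equiv 0$ where $\gamma=\kappa$. The paper's presentation is virtually identical, including the observation that $\tfrac{e^\gamma-e^{-\gamma}}{e^\gamma-2+e^{-\gamma}}=\tfrac{1+e^{-\gamma}}{1-e^{-\gamma}}$.
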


\begin{proof}
The proof of \eqref{Lyapunov formula} is very simple: combining $g(x)=\psi_+(x)\psi_-(x)$ with the Wronskian relation \eqref{E:Wron}, we have
\begin{align*}
\int_0^1 \frac{dx}{2g(x)} = \tfrac12 \int_0^1 \frac{d }{dx} \log\Bigl[\tfrac{\psi_-(x)}{\psi_+(x)}\Bigr]\,dx = \tfrac12 \log\Bigl[\tfrac{\psi_-(x+1)\psi_+(x)}{\psi_-(x)\psi_+(x+1)}\Bigr] = \gamma.
\end{align*}

By \eqref{mcR0 kernel}, we have
$$
\tr\Bigl\{ \sqrt{\mcR_0}\, q \, \sqrt{\mcR_0} \Bigr\} = \frac{(1+e^{-\kappa})}{2\kappa(1-e^{-\kappa})} \int_0^1 q(x)\,dx,
$$
while
\begin{align*}
\int_0^1\int_\R \tfrac{1}{2} e^{-2\kappa|x-y|} q(y)\,dy\,dx &= \sum_{k\in\Z} \int_0^1\int_0^1 \tfrac{1}{2} e^{-2\kappa|x-k-y|} q(y)\,dy\,dx \\
&= \int_0^1\int_\R \tfrac{1}{2} e^{-2\kappa|x-y|} q(y) \,dx\,dy = \tfrac{1}{2\kappa} \int_0^1 q(y)\,dy.
\end{align*}
This proves \eqref{periodic trace}.

The identity \eqref{periodic determinant} can be readily deduced from \cite[Theorem~2.9]{MR0559928}, which is a recapitulation of venerable results of Hill and of Wittaker and Watson.  For completeness, we given an alternate proof paralleling our arguments from the rapidly decreasing case.

By \eqref{Lyapunov formula}, we see that \eqref{periodic determinant} holds in the case $q\equiv 0$.  Moreover, arguing as in the decaying case, we find that for any $f\in C^\infty(\R/\Z)$,
$$
\frac{d\ }{ds}\biggr|_{s=0} \log\det\left( 1 + \sqrt{\mcR_0}\, (q+sf) \, \sqrt{\mcR_0} \right) (x) = \int_0^1 \mathcal G(x,x) f(x)\,dx.
$$
On the other hand, by \eqref{Lyapunov formula} and \eqref{peri red},
$$
\frac{d\ }{ds}\biggr|_{s=0} \log\bigl(e^{\gamma(\kappa;q+sf)} - 2 + e^{-\gamma(\kappa;q+sf)} \bigr) = \frac{e^\gamma-e^{-\gamma}}{(e^{\gamma} - 2 + e^{-\gamma})}\int_0^1 g(x) f(x)\,dx.
$$
In view of \eqref{mcR kernel'}, these two derivatives agree.  Thus equality in \eqref{periodic determinant} extends to all $q\in B_{\delta,\kappa}$.
\end{proof}

\begin{corollary}  For smooth initial data, the conservation of
$$
\int_0^1 \rho(x)\,dx
$$
under the KdV flow, which follows from \eqref{E:l5.1h}, is equivalent to conservation of
$$
-\log\det_2\left( 1 + \sqrt{\mcR_0}\, q \, \sqrt{\mcR_0} \right),
$$
which was proved in \cite{KVZ}.
\end{corollary}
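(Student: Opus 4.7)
The plan is to reduce both conservation laws to the conservation of the Lyapunov exponent $\gamma(\kappa)$, modulo Casimirs of the Poisson bracket \eqref{3.0}.

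First I would rewrite $\int_0^1\rho(x)\,dx$ in closed form. Using the definition \eqref{E:rho defn} of $\rho$, the Lyapunov identity \eqref{Lyapunov formula}, and the elementary computation
\[
\int_0^1\!\int_\R \tfrac12 e^{-2\kappa|x-y|} q(y)\,dy\,dx = \tfrac{1}{2\kappa}\int_0^1 q(y)\,dy
\]
carried out in the proof of \eqref{periodic trace}, one obtains
\[
\alpha(\kappa;q) = \int_0^1 \rho(x)\,dx = \kappa - \gamma(\kappa) + \tfrac{1}{2\kappa}\int_0^1 q(x)\,dx.
\]

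Next I would rewrite $-\log\det_2\bigl(1+\sqrt{\mcR_0}\,q\sqrt{\mcR_0}\bigr)$ in closed form as well. By the standard relation between the $2$-modified and ordinary Fredholm determinants,
\[
-\log\det{}_2\bigl(1+\sqrt{\mcR_0}\,q\sqrt{\mcR_0}\bigr) = -\log\det\bigl(1+\sqrt{\mcR_0}\,q\sqrt{\mcR_0}\bigr) + \tr\bigl(\sqrt{\mcR_0}\,q\sqrt{\mcR_0}\bigr).
\]
Substituting \eqref{periodic determinant} and \eqref{periodic trace} then yields
\[
-\log\det{}_2\bigl(1+\sqrt{\mcR_0}\,q\sqrt{\mcR_0}\bigr)
 = -\log\bigl(e^{\gamma}-2+e^{-\gamma}\bigr) + \log\bigl(e^{\kappa}-2+e^{-\kappa}\bigr) + \tfrac{1}{2\kappa}\tfrac{1+e^{-\kappa}}{1-e^{-\kappa}}\int_0^1 q(x)\,dx,
\]
which, since $e^{\gamma}-2+e^{-\gamma}=4\sinh^2(\gamma/2)$ is strictly monotone in $\gamma\geq 0$, is a strictly monotone function of $\gamma$ plus a multiple of $\int_0^1 q\,dx$ (with $\kappa$ held fixed).

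Finally, $\int_0^1 q(x)\,dx$ is a Casimir of \eqref{3.0}, so it is conserved under the flow generated by \emph{any} Hamiltonian, in particular under the KdV flow. Consequently, in both closed-form expressions above the only non-Casimir dependence on $q$ is through $\gamma(\kappa;q)$, and this dependence is monotone (indeed affine for $\alpha$ and strictly monotone for $-\log\det_2$). Thus the conservation of $\int_0^1\rho(x)\,dx$ under the KdV flow, the conservation of $\gamma(\kappa;q)$, and the conservation of $-\log\det_2\bigl(1+\sqrt{\mcR_0}\,q\sqrt{\mcR_0}\bigr)$ are mutually equivalent, which is the content of the corollary. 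There is no real obstacle here; the work has already been done in \eqref{Lyapunov formula}--\eqref{periodic determinant}, and this last step is purely an arithmetic bookkeeping argument.
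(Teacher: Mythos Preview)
Your proof is correct and is exactly the argument the paper has in mind: the corollary is stated without proof precisely because it follows from the preceding Proposition via the bookkeeping you describe---express both quantities as functions of $\gamma(\kappa)$ plus multiples of the Casimir $\int_0^1 q$, and use the monotone dependence on $\gamma$ to conclude equivalence.
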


\section{Local smoothing}\label{S:7}

Our first goal in this section is to derive a local smoothing result for $H^{-1}$-solutions to \eqref{KdV} on the line.  A similar a priori estimate was obtained by Buckmaster and Koch in \cite{MR3400442} via the Miura map.

\begin{lemma}[Local smoothing]\label{L:loc smoothing}
There exists $\delta>0$ so that for every $H^{-1}(\R)$-solution $q(t)$ to \eqref{KdV}, in the sense of Corollary~\ref{C:1}, with initial data $q(0)\in B_\delta$,
\begin{equation}\label{E:loc smoothing *}
\sup_{t_0,x_0\in\R} \ \int_{0}^{1} \!\! \int_{0}^{1} |q(t-t_0,x-x_0)|^2 \,dx\,dt \lesssim \delta^2.
\end{equation}
\end{lemma}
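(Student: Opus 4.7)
The plan is to integrate the microscopic conservation law \eqref{E:l5.1h} against a monotone weight, extract the local $L^2$-norm of $q$ from the dominant term $\tfrac32 e^{-2\kappa|\cdot|}*q^2$, and absorb the remaining terms using the small-data machinery from Section~\ref{S:2}. Translation in $x$ and $t$ will then upgrade a single local bound to the supremum in \eqref{E:loc smoothing *}, and a density argument will transfer the bound from Schwartz to general $H^{-1}$ solutions.

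\emph{Reduction to Schwartz data.} Given $q(0)\in B_\delta$, approximate by Schwartz $q_n(0)\to q(0)$ in $H^{-1}(\R)$. By Corollary~\ref{C:1}, the Schwartz solutions $q_n(t)$ converge to $q(t)$ in $C_tH^{-1}$ on $[-1,1]$, hence distributionally on the time--space window in question. Once \eqref{E:loc smoothing *} is established for each $q_n$ with a constant depending only on $\delta$, the uniform $L^2$-bound combined with weak-$*$ compactness and uniqueness of distributional limits identifies the weak limit with $q$; lower semicontinuity of the $L^2$-norm then gives \eqref{E:loc smoothing *} for $q$. It therefore suffices to consider Schwartz solutions with $\|q(0)\|_{H^{-1}}\le \delta$, small enough for Section~\ref{S:2} and Proposition~\ref{L:5.2} to apply (Corollary~\ref{C:1}, specifically \eqref{global bound}, keeps $\|q(t)\|_{H^{-1}}\lesssim \delta$ for all $t$).

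\emph{The weighted identity.} Fix $\kappa=1$ and choose $\phi\in C^\infty(\R)$ monotone with $\phi\equiv 1$ on $(-\infty,-1]$, $\phi\equiv 0$ on $[1,\infty)$, and $\phi'\le 0$. Pairing \eqref{E:l5.1h} with $\phi$, integrating by parts in $x$, and integrating $t$ over $[0,1]$ yields
\begin{align*}
\tfrac32\int_0^1\!\!\int |\phi'(x)|\,[e^{-2|\cdot|}*q^2](t,x)\,dx\,dt
  &= \Bigl[\int\!\phi(x)\rho(t,x)\,dx\Bigr]_{t=0}^{t=1} \\
  &\quad -2\!\int_0^1\!\!\int|\phi'|\,q\bigl[1-\tfrac{1}{2g}\bigr]\,dx\,dt
  + 4\!\int_0^1\!\!\int|\phi'|\rho\,dx\,dt,
\end{align*}
since $|\phi'|=-\phi'\ge 0$.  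Each term on the right is $O(\delta^2)$: the endpoint and last terms use $\rho\ge 0$ together with $\|\rho(t)\|_{L^1}=\alpha(1;q(t))\lesssim\delta^2$ from \eqref{alpha as I2}; the mixed term uses $H^{-1}/H^1$ duality, the $H^1$-algebra property to absorb multiplication by $|\phi'|\in C^\infty_c$, and the diffeomorphism bound $\|1-\tfrac{1}{2g}\|_{H^1}\lesssim\|q\|_{H^{-1}}\lesssim\delta$ from Proposition~\ref{P:diffeo}, all uniformly in $t$.

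\emph{Extracting the $L^2$-norm.} Moving the convolution onto the weight,
\begin{align*}
\int|\phi'(x)|\,[e^{-2|\cdot|}*q^2](x)\,dx = \int\bigl(|\phi'|*e^{-2|\cdot|}\bigr)(y)\,q(y)^2\,dy,
\end{align*}
and since $|\phi'|*e^{-2|\cdot|}$ is bounded below by a positive constant on $[0,1]$, the left-hand side dominates $c\!\int_0^1 q(t,y)^2\,dy$. Assembling the previous two paragraphs gives $\int_0^1\!\!\int_0^1 q^2\lesssim\delta^2$. Because \eqref{KdV} is autonomous and translation-invariant in $x$, and both transformations preserve $\|q(0)\|_{H^{-1}}$, the same bound holds after replacing $q(t,x)$ by $q(t-t_0,x-x_0)$, which produces the supremum in \eqref{E:loc smoothing *}.

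\emph{Main obstacle.} The essential delicacy is controlling the non-coercive cross term $\int|\phi'|q[1-\tfrac{1}{2g}]$: this is precisely where the $H^{-1}\to H^1$ diffeomorphism of Proposition~\ref{P:diffeo} does its work, trading the rough factor $q$ against the smoothed factor $1-\tfrac{1}{2g}$. Everything else (positivity of $\rho$, the convolution lower bound, the approximation step) is standard once the microscopic law \eqref{E:l5.1h} is in hand.
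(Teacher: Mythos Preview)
Your proof is correct and follows essentially the same approach as the paper: integrate the microscopic conservation law \eqref{E:l5.1h} at $\kappa=1$ against a monotone weight, extract the local $L^2$-norm from the convolution term, bound the remaining terms via $\rho\in L^1$ and the $H^{-1}/H^1$ pairing against $\kappa-\tfrac{1}{2g}\in H^1$, and then pass to general $H^{-1}$ solutions by approximation and weak lower-semicontinuity. The only cosmetic differences are that the paper takes $\phi'$ to be positive Schwartz (rather than negative and compactly supported) and is terser about bounding the individual terms in the analogue of your weighted identity.
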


\begin{proof}
As noted already in Proposition~\ref{L:5.2}, conservation of $\alpha(\kappa=1)$ guarantees that
\begin{align*}
\| q \|_{L^\infty_x H^{-1}_x} ^2 \lesssim \delta^2.
\end{align*}
This allows us to choose $\delta$ sufficiently small that all results from Section~\ref{S:3} can be applied at all times $t\in\R$.
It also means that it suffices to prove \eqref{E:loc smoothing *} with $t_0=x_0=0$.

Let us now fix a smooth function $\phi$ whose derivative $\phi'$ is positive and Schwartz and define
$$
\psi(x) = \tfrac32 \int_\R e^{-2|x-y|} \phi'(y) \,dy,
$$
which is positive everywhere.

Suppose first that $q(t)$ is a Schwartz solution to \eqref{KdV}.  Setting $\kappa=1$ in \eqref{E:l5.1h}, we obtain
\begin{align*}
\tfrac{d\ }{dt}  \rho(t,x) &= \Bigl( \tfrac32 \bigl[e^{-2|\cdot|}* q(t)^2\bigr](x)  + 2q(t,x)\bigl[ 1 - \tfrac{1}{2g(t,x)}\bigr] - 4\rho(t,x) \Bigr)'.
\end{align*}
Integrating this against $\phi(x)$ and integrating by parts, yields
\begin{align}
\int_0^1 \! \int_\R |q(t,x)|^2 \psi(x) \,dx\,dt &= \int_\R [\rho(0,x)-\rho(1,x)] \phi(x) \,dx \notag\\
&\quad - 2 \int_0^1 \! \int_\R q(t,x)\bigl[ 1 - \tfrac{1}{2g(t,x)}\bigr] \phi'(x) \,dx\,dt \label{64}\\
&\quad + 4 \int_0^1 \! \int_\R \rho(t,x) \phi'(x) \,dx\,dt . \notag
\end{align}
But by the results of Section~\ref{S:3}, the right-hand side is bounded uniformly;  thus \eqref{E:loc smoothing *} follows for Schwartz solutions.

Next we allow $q(t)$ to be a general (not Schwartz) solution to \eqref{KdV} and suppose $q_n(t)$ is a sequence of Schwartz solutions with $q_n(0)\to q(0)$ in $H^{-1}(\R)$.  By weak lower-semicontinuity of the $L^2$-norm and the fact that weak convergence is guaranteed by Theorem~\ref{T:converge},
\begin{equation*}
\int_{0}^{1} \int_{0}^{1} |q(t,x)|^2 \,dx\,dt \leq \liminf_{n\to\infty} \int_{0}^{1} \int_0^1 |q_n(t,x)|^2 \,dx\,dt.
\end{equation*}
Thus, \eqref{E:loc smoothing *} for such general solutions $q(t)$ follows from the Schwartz-class case already proven.
\end{proof}

Using this a priori bound as a stepping stone, we will now show that solutions whose initial data converge in $H^{-1}(\R)$ actually converge in the local smoothing norm as claimed in Theorem~\ref{T:ls conv}.  In fact, the following proposition is strictly stronger than this theorem because of the additional uniformity in $x_0$.

\begin{prop}\label{P:loc smoothing}
Let $q(t)$ and $q_n(t)$ be $H^{-1}(\R)$-solutions to \eqref{KdV}, in the sense of Corollary~\ref{C:1}, with initial data $q_n(0)\to q(0)$ in $H^{-1}(\R)$.  Then for every $T>0$,
\begin{equation}\label{E:loc smoothing n}
\lim_{n\to\infty} \ \sup_{x_0\in\R} \ \int_{-T}^T \int_0^1 |q(t,x-x_0)-q_n(t,x-x_0)|^2 \,dx\,dt  = 0.
\end{equation}
In particular, solutions in the sense of Corollary~\ref{C:1} are distributional solutions.
\end{prop}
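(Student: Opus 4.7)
The plan is the following. First, via the scaling \eqref{q scaling}--\eqref{H-1 scaling} (which shrinks the $H^{-1}$-norm of the data) together with the concatenation of unit time intervals, I reduce to the case $q(0),q_n(0)\in B_{\delta/2}$ and to the estimate on a unit time interval. The identity \eqref{64} from the proof of Lemma~\ref{L:loc smoothing} extends from Schwartz to $H^{-1}$-solutions by the same Schwartz-approximation used at the end of that proof, since the real-analytic dependence of $\rho(t,\cdot)$ and $g(t,\cdot)$ on $q\in H^{-1}$ established in Propositions~\ref{P:diffeo} and~\ref{P:Intro rho} lets us pass to the limit in each term on both sides.

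Applying \eqref{64} to both $q$ and $q_n$ with the translated weight $\phi(\cdot-x_0)$ and subtracting, each of the three right-hand contributions tends to $0$ as $n\to\infty$ uniformly in $x_0\in\R$: the $\rho$-terms are bounded by $\|\phi\|_{L^\infty}\|\rho_n(t,\cdot)-\rho(t,\cdot)\|_{L^1}$ and $\|\phi'\|_{L^\infty}\|\rho_n-\rho\|_{L^1}$, while the $g$-term of the form $\int(q_n[1-\tfrac{1}{2g_n}]-q[1-\tfrac{1}{2g}])\phi'(\cdot-x_0)$ is split as $(q_n-q)[1-\tfrac{1}{2g}]\phi'+q_n([1-\tfrac{1}{2g_n}]-[1-\tfrac{1}{2g}])\phi'$ and estimated via the $H^{-1}$--$H^1$ duality, using that $H^1(\R)$ is an algebra and $\phi,\phi'$ are Schwartz (so the $H^1$-norm of a translate is independent of $x_0$). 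Combined with the uniform-in-$t$ convergence $q_n(t)\to q(t)$ in $H^{-1}$ from Theorem~\ref{T:converge} and with the continuity of $\rho,g$ in $q$, these bounds yield
\[
\sup_{x_0\in\R}\Bigl|\int_0^1\!\!\int|q_n|^2\psi(\cdot-x_0)\,dx\,dt-\int_0^1\!\!\int|q|^2\psi(\cdot-x_0)\,dx\,dt\Bigr|\longrightarrow 0.
\]
Combined with the uniform $L^2_{\text{loc}}$ bound of Lemma~\ref{L:loc smoothing} and the $C_tH^{-1}_x$ convergence, a density argument (approximate an $L^2(\psi\,dx\,dt)$ test function by a smooth one, then pair the smooth part via $H^{-1}$--$H^1$) gives weak convergence $q_n\rightharpoonup q$ in $L^2(\psi(\cdot-x_0)\,dx\,dt)$ for each fixed $x_0$. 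In this Hilbert space, norm-plus-weak convergence yields strong convergence (Radon--Riesz), so $\int|q_n-q|^2\psi(\cdot-x_0)\,dx\,dt\to 0$ for each fixed $x_0$.

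The main obstacle is promoting this pointwise-in-$x_0$ statement to the uniform-in-$x_0$ claim \eqref{E:loc smoothing n}, since the family $\{q\cdot\psi(\cdot-x_0)\}_{x_0\in\R}$ in $L^2$ is not precompact and so the weak-convergence step is not obviously uniform. I resolve this by contradiction and compactness. Supposing some $\epsilon>0$ and sequences $n_k\to\infty$, $x_0^{(k)}\in\R$ give $\int|q_{n_k}-q|^2\psi(\cdot-x_0^{(k)})\geq\epsilon$, define the translates $\tilde q_k(t,y):=q(t,y+x_0^{(k)})$ and $\tilde q_{n_k}(t,y):=q_{n_k}(t,y+x_0^{(k)})$, which remain $H^{-1}$-solutions of \eqref{KdV} by translation-invariance. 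The translation orbit $\{\tilde q_k(0)\}$ of the single element $q(0)\in H^{-1}(\R)$ is automatically equicontinuous in $H^{-1}$, so by weak sequential compactness a subsequence satisfies $\tilde q_k(0)\rightharpoonup\tilde q_\infty(0)$; since weak convergence in the Hilbert space $H^{-1}$ together with equicontinuity (tightness on the Fourier side) forces strong convergence, in fact $\tilde q_k(0)\to\tilde q_\infty(0)$ in $H^{-1}$. As $\|\tilde q_{n_k}(0)-\tilde q_k(0)\|_{H^{-1}}=\|q_{n_k}(0)-q(0)\|_{H^{-1}}\to 0$, also $\tilde q_{n_k}(0)\to\tilde q_\infty(0)$, and Corollary~\ref{C:1} gives $\tilde q_k(t),\tilde q_{n_k}(t)\to\tilde q_\infty(t)$ in $C_tH^{-1}_x$ on $[-T,T]$. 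Applying the fixed-$x_0$ argument above to both sequences produces $L^2(\psi\,dy\,dt)$-convergence to $\tilde q_\infty$, and the triangle inequality gives $\tilde q_{n_k}-\tilde q_k\to 0$ in $L^2(\psi\,dy\,dt)$; undoing the translation contradicts the lower bound.

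Finally, for any $H^{-1}$-solution $q$, taking Schwartz solutions $Q_m$ with $Q_m(0)\to q(0)$ in $H^{-1}(\R)$, the above yields $Q_m\to q$ in $L^2_{\text{loc}}(\R\times\R)$. Hence $Q_m^2\to q^2$ in $L^1_{\text{loc}}$, and passing to the limit in the classical identity $\partial_tQ_m=-Q_m'''+3(Q_m^2)'$ satisfied by each Schwartz solution exhibits $q$ as a distributional solution of \eqref{KdV}.
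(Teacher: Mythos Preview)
Your approach is genuinely different from the paper's: rather than the high--low frequency decomposition via Lemma~\ref{L:kappa ls} (which exploits \eqref{E:l5.1h} as $\kappa\to\infty$ and is manifestly uniform in $x_0$ by translation invariance), you work with \eqref{64} at $\kappa=1$ to obtain convergence of the weighted $L^2$-norms and then combine this with weak convergence via Radon--Riesz. The idea is attractive, but as written there are two real gaps.

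First, the extension of \eqref{64} to a general $H^{-1}$-solution $q$ is circular. Passing to the limit on the left side $\int|q_m|^2\psi\to\int|q|^2\psi$ along a Schwartz approximating sequence $q_m$ is exactly strong $L^2_{\mathrm{loc}}$ convergence, which is what you are trying to prove; the device at the end of the proof of Lemma~\ref{L:loc smoothing} used only weak lower-semicontinuity and yields an inequality, not the equality you need. This can be repaired by first proving the statement when $q$ and all $q_n$ are Schwartz (where \eqref{64} is available) and only then extending, but you have not organized the argument that way.

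Second, and more seriously, the compactness step for uniformity in $x_0$ rests on a false assertion: weak convergence in $H^{-1}(\R)$ together with equicontinuity (Fourier-side tightness) does \emph{not} force strong convergence. Indeed, the translates $\tilde q_k(0)=q(0)(\cdot+x_0^{(k)})$ of any nonzero $q(0)\in H^{-1}(\R)$ form an equicontinuous family, and if $|x_0^{(k)}|\to\infty$ they converge weakly to zero while keeping constant nonzero norm. Strong $H^{-1}$-precompactness requires, in addition, physical-space tightness, which translates of a fixed function plainly lack. Hence your extracted subsequence need not converge strongly in $H^{-1}$, Corollary~\ref{C:1} cannot be invoked to propagate convergence to later times, and the contradiction collapses. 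The case $|x_0^{(k)}|\to\infty$ would have to be handled by a separate argument (for instance, showing via \eqref{64} and conservation of $\alpha$ that $\int|q|^2\psi(\cdot-x_0)\to0$ as $|x_0|\to\infty$, and then using your uniform norm convergence), but this is absent from the proposal.
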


A major part of the argument leading to Proposition~\ref{P:loc smoothing} is a refinement of the proof of Lemma~\ref{L:loc smoothing}. The key improvement stems from analyzing the behavior of the various terms in \eqref{64} as $\kappa\to\infty$, rather than simply setting $\kappa=1$.  We begin with the following preliminary estimates:

\begin{lemma}
Fix $\psi\in C^\infty_c(\R)$ with $\supp(\psi)\subset (0,1)$.  There exists $\delta>0$ so that
\begin{gather}
\bigl\| \psi(x)\bigl[g(x)-\tfrac1{2\kappa}\bigr] + \kappa^{-1}[R_0(2\kappa)(q\psi)](x) \bigr\|_{L^2(\R)}^2
	\lesssim \kappa^{-7} \biggl[ 1+ \int_0^1 |q(x)|^2\,dx \biggr] \label{E:psi g}\\
\bigl\| \psi(x)\bigl[\kappa -\tfrac1{2g(x)}\bigr] + 2\kappa [R_0(2\kappa)(q\psi)](x) \bigr\|_{L^2(\R)}^2
	\lesssim \kappa^{-3} \biggl[ 1+ \int_0^1 |q(x)|^2\,dx \biggr] \label{E:psi 1/g}\\
\ \ \biggl| \int \rho(x) \psi(x)^2 \,dx - \tfrac{1}{2\kappa} \int \frac{|\widehat{q\psi} (\xi)|^2\,d\xi}{\xi^2+4\kappa^2} \biggr|
	\lesssim \kappa^{-7/2} \biggl[ 1+ \int_0^1 |q(x)|^2\,dx \biggr] \label{E:psi rho}\\
\biggl| \iint q(x)^2 \kappa e^{-2\kappa|x-y|} \psi(y)^2 \,dx\,dy - \int q(x)^2 \psi(x)^2 \,dx \biggr|
	\lesssim \int_\R \frac{|q(x)|^2\,dx}{\kappa(1+x^2)} \label{E:psi dumb}
\end{gather}
for every $q\in B_\delta$ and $\kappa\geq 1$.  (Note that the implicit constants depend on $\psi$.)
\end{lemma}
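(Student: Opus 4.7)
My plan is to prove each estimate by Taylor-expanding the relevant nonlinear functional of $q$ via the series \eqref{E:g series}, isolating the explicit linear or quadratic term that matches the stated approximation, and then carefully bounding the remainder. The key leverage throughout is the support condition $\supp\psi\subset(0,1)$, which lets us trade the global bound $\|q\|_{H^{-1}}\le\delta$ for the local bound $\int_0^1|q(x)|^2\,dx$ in the relevant places and thereby recover the indicated powers of $\kappa$. The basic pointwise workhorses, obtained from Cauchy--Schwarz against the exponential kernel, are $|R_0(2\kappa)q(x)|^2\le\tfrac{1}{16\kappa^3}[e^{-2\kappa|\cdot|}\ast q^2](x)$ and $|\partial R_0(2\kappa)q(x)|^2\le\tfrac{1}{4\kappa}[e^{-2\kappa|\cdot|}\ast q^2](x)$, used together with $\int_0^1 e^{-2\kappa|x-y|}\,dx\lesssim\min(\kappa^{-1},e^{-c\kappa\dist(y,[0,1])}/\kappa)$.

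Estimate \eqref{E:psi dumb} is the most elementary: since $\int\kappa e^{-2\kappa|x-y|}\,dy=\tfrac12$ for each $x$, the difference equals $\iint[q(x)^2-q(y)^2]\kappa e^{-2\kappa|x-y|}\psi(y)^2\,dx\,dy$ up to symmetric terms, and the exponential kernel confined to $|x-y|\lesssim\kappa^{-1}$ combined with the smooth decay of $\psi$ gives the $\kappa^{-1}(1+x^2)^{-1}$ weight after a direct computation. For \eqref{E:psi g}, I begin from $g-\tfrac{1}{2\kappa}=-\kappa^{-1}R_0(2\kappa)q+\sum_{\ell\ge 2}(-1)^\ell\langle\sqrt{R_0}\delta_x,A^\ell\sqrt{R_0}\delta_x\rangle$ with $A=\sqrt{R_0}q\sqrt{R_0}$. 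Multiplying by $\psi$ and adding $\kappa^{-1}R_0(2\kappa)(q\psi)$ converts the linear contribution into the commutator $\kappa^{-1}[R_0(2\kappa),\psi]q=\kappa^{-1}R_0(\psi''+2\psi'\partial)R_0\,q$; the locality of $\psi',\psi''$ in $(0,1)$ combined with the pointwise bounds above yields $\|[R_0(2\kappa),\psi]q\|_{L^2}\lesssim\kappa^{-3}(\int_0^1 q^2)^{1/2}$ (after integrating $\partial$ by parts onto $\psi'$). The $\ell\ge 2$ terms are paired with $f\in L^2$ and bounded by $|\tr(\sqrt{R_0}f\psi\sqrt{R_0}\,A^\ell)|\lesssim\|\sqrt{R_0}f\psi\sqrt{R_0}\|_{\I_2}\,\|A\|_{\I_2}^\ell$, using $\|\sqrt{R_0}f\psi\sqrt{R_0}\|_{\I_2}^2\le(4\kappa^3)^{-1}\|\psi\|_\infty^2\|f\|_{L^2}^2$ and $\|A\|_{\I_2}\le\kappa^{-1/2}\|q\|_{H^{-1}}$; summing in $\ell$ and squaring fits comfortably within the required $\kappa^{-7}$ budget.

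For \eqref{E:psi 1/g}, I use the identity $\kappa-\tfrac{1}{2g}=\frac{2\kappa^2 h}{1+2\kappa h}$ with $h=g-\tfrac{1}{2\kappa}$; since $\|h\|_{L^\infty}\lesssim\kappa^{-1}$ by Proposition~\ref{P:diffeo} and Sobolev, the denominator is uniformly close to $1$. The decomposition
\[
\psi\bigl[\kappa-\tfrac{1}{2g}\bigr]+2\kappa R_0(2\kappa)(q\psi)=2\kappa^2\bigl[\psi h+\kappa^{-1}R_0(2\kappa)(q\psi)\bigr]-\tfrac{4\kappa^3\psi h^2}{1+2\kappa h}
\]
reduces the first piece to $2\kappa^2$ times the bound \eqref{E:psi g}, contributing $\kappa^{-3/2}(1+\int_0^1 q^2)^{1/2}$ in $L^2$. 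For the second piece I combine the improved bound $\|\psi h\|_{L^2}\lesssim\kappa^{-3}(1+\int_0^1 q^2)^{1/2}$ (extracted from \eqref{E:psi g} since $\|\kappa^{-1}R_0(2\kappa)(q\psi)\|_{L^2}\lesssim\kappa^{-3}\|q\psi\|_{L^2}$) with the refined $L^\infty$-bound $\|h\|_{L^\infty((0,1))}\lesssim\kappa^{-3/2}(1+\int_0^1 q^2)^{1/2}$ obtained by applying the pointwise Cauchy--Schwarz bound to each term of the series expansion of $h$ at points $x\in(0,1)$; this yields $\|\psi h^2\|_{L^2}\le\|h\|_{L^\infty((0,1))}\|\psi h\|_{L^2}\lesssim\kappa^{-9/2}(1+\int_0^1 q^2)$ and hence the required $\kappa^{-3/2}$ bound after multiplying by $4\kappa^3$.

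Finally, \eqref{E:psi rho} combines the above with the quadratic Taylor expansion of $\rho$. Using $\rho=(\kappa-\tfrac{1}{2g})+2\kappa R_0(2\kappa)q$ and the explicit formula $\tfrac12 d^2\rho(x)|_0(f,f)=-8\kappa[R_0(2\kappa)f]^2(x)+\tfrac{1}{2\kappa}\iint e^{-\kappa(|x-y|+|y-z|+|z-x|)}f(y)f(z)\,dy\,dz$ (from \eqref{ddrho} evaluated at $q\equiv 0$ with $G(x,y;\kappa,0)=\tfrac{1}{2\kappa}e^{-\kappa|x-y|}$), one has $\rho=\tfrac12 d^2\rho|_0(q,q)+O(\|q\|^3)$. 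Integrating against $\psi^2$ and substituting the explicit kernel extracts the desired $\tfrac{1}{2\kappa}(q\psi,R_0(2\kappa)(q\psi))_{L^2}=\tfrac{1}{2\kappa}\int|\widehat{q\psi}|^2/(\xi^2+4\kappa^2)\,d\xi$, plus a discrepancy coming from the kernel mismatch $\psi^2(x)$ versus $\psi(y)\psi(z)$; this mismatch vanishes on the diagonal $y=z=x$, so a Taylor expansion in $|x-y|,|x-z|\lesssim\kappa^{-1}$ combined with exponential decay gives an error of order $\kappa^{-7/2}(1+\int_0^1 q^2)$, with cubic and higher terms of $\rho$ controlled by the trace bounds used in \eqref{E:psi g}. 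The main obstacle I anticipate is the second piece of \eqref{E:psi 1/g}: the naive bound $\|\psi h^2\|_{L^2}\le\|h\|_{L^\infty(\R)}\|\psi h\|_{L^2}$ is off by a factor of $\kappa^{1/2}$, and obtaining the sharper $L^\infty$-bound on $h$ restricted to $\supp\psi$ requires a careful combined treatment of the leading term $-\kappa^{-1}R_0(q\psi)$ and the higher-order tail of the series.
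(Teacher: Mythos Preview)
Your overall architecture is sensible, but there is a genuine gap in your treatment of the $\ell\ge 2$ terms in \eqref{E:psi g}, and the error propagates through \eqref{E:psi 1/g} and \eqref{E:psi rho}.  You pair these terms against $f\in L^2$ and bound
\[
\bigl|\tr\bigl(\sqrt{R_0}\,f\psi\,\sqrt{R_0}\,A^\ell\bigr)\bigr|
\le \|\sqrt{R_0}\,f\psi\,\sqrt{R_0}\|_{\I_2}\,\|A\|_{\I_2}\,\|A\|_{\op}^{\ell-1},
\qquad A=\sqrt{R_0}\,q\,\sqrt{R_0}.
\]
For $\ell=2$ this gives at best $\kappa^{-3/2}\|f\|_{L^2}\cdot(\kappa^{-1/2}\|q\|_{H^{-1}})^2\lesssim\kappa^{-5/2}\delta^2\|f\|_{L^2}$, hence only $\kappa^{-5}$ for the squared $L^2$ norm.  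This is two powers of $\kappa$ short of the required $\kappa^{-7}$, and cannot be absorbed into $\kappa^{-7}[1+\int_0^1 q^2]$.  The trouble is that you have left $\psi$ sitting with the test function $f$, so every $q$ factor is estimated only in $H^{-1}$; the localization of $\psi$ is never exploited to upgrade any $q$ to $L^2(0,1)$.

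The fix, which is precisely what the paper does, is to commute $\psi$ through one copy of $R_0$ so that it lands on the \emph{first} $q$ factor.  One writes $[\psi,R_0]=\sqrt{R_0}\,B\,\sqrt{R_0}+\sqrt{R_0}\,C\,\sqrt{R_0}\,\psi'$ with $\|B\|_{\op}\lesssim\kappa^{-2}$ and $\|C\|_{\op}\lesssim\kappa^{-1}$.  The main surviving term for $\ell\ge 2$ then contains $\sqrt{R_0}\,\psi q\,\sqrt{R_0}$, and since $\psi q\in L^2$ with support in $(0,1)$ one has $\|\sqrt{R_0}\,\psi q\,\sqrt{R_0}\|_{\I_2}\lesssim\kappa^{-3/2}\|\psi q\|_{L^2}$, a full $\kappa^{-1}$ better than $\|\sqrt{R_0}\,q\,\sqrt{R_0}\|_{\I_2}\lesssim\kappa^{-1/2}\|q\|_{H^{-1}}$.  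This is exactly the missing power.  The commutator remainders (involving $B$ and $\psi'q$) are harmless: the $B$ term carries the extra $\kappa^{-2}$ directly, while the $\psi'q$ term again localizes $q$ to $(0,1)$.  Your proposed $L^\infty(0,1)$ bound on $h=g-\tfrac{1}{2\kappa}$ suffers from the same defect: the tail $\sum_{\ell\ge 2}$ of the series at a point $x\in(0,1)$ feels $q$ everywhere through the nonlocal resolvent, so you cannot reach $\kappa^{-3/2}(1+\int_0^1 q^2)^{1/2}$ without first pushing $\psi$ (or some localizer) onto a $q$ factor.  Finally, in \eqref{E:psi dumb} note that $\int\kappa e^{-2\kappa|x-y|}\,dy=1$, not $\tfrac12$, and the difference you want to bound is $\int q(x)^2\bigl[\psi(x)^2-\int\kappa e^{-2\kappa|x-y|}\psi(y)^2\,dy\bigr]dx$; there is no symmetrization in $q$.
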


\begin{proof}
We begin with a commutator calculation:
\begin{align*}
[\psi(x),R_0] &= R_0 \bigl(-2\partial_x \psi'(x) + \psi''(x)\bigr) R_0 \\
&= R_0 \bigl(-2\partial_x\bigr) [\psi'(x),R_0] + R_0 \bigl(-2\partial_x\bigr)R_0\psi'(x) + R_0\psi''(x) R_0.
\end{align*}
This shows that for $\kappa\geq 1$, we can write
\begin{equation}\label{psi comm bound}
\begin{gathered}\relax
[\psi(x),R_0] = \sqrt{R_0} A \sqrt{R_0} = \sqrt{R_0} B \sqrt{R_0} + \sqrt{R_0} C \sqrt{R_0} \psi'(x) \\
\text{with} \quad  \|A\|_{L^2\to L^2} + \|C\|_{L^2\to L^2} \lesssim \kappa^{-1} \qtq{and} \|B\|_{L^2\to L^2} \lesssim \kappa^{-2}.
\end{gathered}
\end{equation}

From the series \eqref{E:g series}, we have
\begin{align}
&\int_\R\{ \psi(x)\bigl[g(x)-\tfrac1{2\kappa}\bigr] + \kappa^{-1}[R_0(2\kappa)(q\psi)](x) \bigr\} f(x)\,dx \label{dual psi g}\\
={}&  \sum_{\ell\geq 2} (-1)^\ell \tr\Bigl\{ \sqrt{R_0}\,f\,\sqrt{R_0} \sqrt{R_0}\,\psi q\,\sqrt{R_0} \Bigl( \sqrt{R_0}\,q\,\sqrt{R_0}\Bigr)^{\ell-1}  \Bigr\} \notag\\
&\ +\sum_{\ell\geq 1} (-1)^\ell  \tr\Bigl\{ \sqrt{R_0}\,f\,\sqrt{R_0} B \Bigl( \sqrt{R_0}\,q\,\sqrt{R_0}\Bigr)^\ell  \Bigr\} \notag\\
&\ +\sum_{\ell\geq 1} (-1)^\ell \tr\Bigl\{ \sqrt{R_0}\,f\,\sqrt{R_0} C \sqrt{R_0}\,\psi' q\,\sqrt{R_0} \Bigl( \sqrt{R_0}\,q\,\sqrt{R_0}\Bigr)^{\ell-1}  \Bigr\}. \notag
\end{align}
Using
\begin{align}\label{I2 S6}
\Bigl\| \sqrt{R_0}\,h\,\sqrt{R_0} \Bigr\|_{\I_2} \lesssim \kappa^{-3/2} \| h \|_{L^2}
\qtq{and} \Bigl\| \sqrt{R_0}\,q\,\sqrt{R_0} \Bigr\|_{\I_2} \lesssim \kappa^{-1/2} \| q \|_{H^{-1}},
\end{align}
which follow from \eqref{R I2}, we then deduce that
\begin{align*}
\text{LHS\eqref{dual psi g}} \leq \kappa^{-7/2} \|f\|_{L^2} \Bigl\{ \|\psi q\|_{L^2} + 1 + \|\psi' q\|_{L^2}\Bigr\},
\end{align*}
provided, say, $\delta\leq\frac12$.  This proves \eqref{E:psi g}.  For future use, we also note that with the aid of \eqref{E:psi g}, one may readily show
\begin{align}\label{step to rho2}
\int \bigl(g(x) - \tfrac1{2\kappa}\bigr)^2\psi(x)^2\,dx + \bigl\| \kappa^{-1} R_0(2\kappa)(q\psi) \bigr\|_{L^2(\R)}^2
	\lesssim \kappa^{-6} \biggl[ 1+ \int_0^1 |q(x)|^2\,dx \biggr] .
\end{align}

Next we prove \eqref{E:psi 1/g}.  This almost follows from \eqref{E:psi g}; indeed, writing
\begin{align}\label{1/g expansion}
\kappa -\tfrac1{2g(x)} = 2\kappa^2\bigl( g(x) - \tfrac1{2\kappa}\bigr) - \tfrac{2\kappa^2}{g(x)} \bigl( g(x) - \tfrac1{2\kappa}\bigr)^2
\end{align}
and invoking \eqref{E:psi g}, we are left only to prove
\begin{align}\label{psi/g left}
\int \tfrac{2\kappa^2}{g(x)} \bigl( g(x) - \tfrac1{2\kappa}\bigr)^2 \psi(x)^2\,dx \lesssim \kappa^{-3} \biggl[ 1+ \int_0^1 |q(x)|^2\,dx \biggr].
\end{align}
This then follows from \eqref{step to rho2}.

We begin the proof of \eqref{E:psi rho} by expanding one step further than \eqref{1/g expansion} to write
\begin{equation*}
\begin{gathered}
\rho(x) =\sum_{i=1}^3 \rho_i(x) \qtq{with} \rho_1(x):= 2\kappa^2\bigl\{g(x) - \tfrac1{2\kappa} + \tfrac1\kappa [R_0(2\kappa)q](x)\bigr\},\\
\rho_2(x):=  - 4\kappa^3\bigl(g(x) - \tfrac1{2\kappa}\bigr)^2 \qtq{and} \rho_3(x):= 4\kappa^3\bigl(g(x) - \tfrac1{2\kappa}\bigr)^3 / g(x).
\end{gathered}
\end{equation*}

Let us begin our analysis with the contribution of $\rho_1$.  From \eqref{E:g series}, we have
\begin{align*}
\int \rho_1(x)\psi(x)^2\,dx = 2\kappa^2 \sum_{\ell\geq 2} (-1)^\ell \tr\Bigl\{ \psi(x)^2 \sqrt{R_0}\Bigl(\sqrt{R_0}\,q\,\sqrt{R_0}\Bigr)^\ell \sqrt{R_0}\Bigr\}.
\end{align*}
Now for any $\ell\geq2$, we have from \eqref{I2 S6}, \eqref{psi comm bound} and its adjoint that
\begin{align*}
&\tr \Bigl\{ \psi(x)^2 \sqrt{R_0} \Bigl(\sqrt{R_0}\,q\,\sqrt{R_0}\Bigr)^\ell \sqrt{R_0} \Bigr\} \\
&\ \ = \tr \Bigl\{  \sqrt{R_0}\,\psi q R_0^2 \psi q\,\sqrt{R_0} \Bigl(\sqrt{R_0}\,q\,\sqrt{R_0}\Bigr)^{\ell-2} \Bigr\}
	+ O\biggl( \kappa^{-6} \delta^{\ell-2} \biggl[ 1+ \int_0^1 |q(x)|^2\,dx \biggr] \biggr) .
\end{align*}
The error term here sums acceptably over $\ell\geq 2$.  The contribution of the first term is also acceptable provided we restrict to $\ell\geq 3$; indeed,
$$
\Bigl| \tr \Bigl\{ \sqrt{R_0}\,\psi q R_0^2 \psi q\,\sqrt{R_0} \Bigl(\sqrt{R_0}\,q\,\sqrt{R_0}\Bigr)^{\ell-2} \Bigr\} \Bigr|
	\lesssim \kappa^{-5-\frac{\ell-2}2} \delta^{\ell-2} \int_0^1 |q(x)|^2\,dx.
$$
Combining all of this, we deduce that
\begin{align*}
\int \rho_1(x)\psi(x)^2\,dx = 2\kappa^2 \tr \Bigl\{ R_0\psi q R_0 \psi q R_0 \Bigr\}
	+ O\biggl( \kappa^{-7/2} \biggl[ 1+ \int_0^1 |q(x)|^2\,dx \biggr] \biggr) .
\end{align*}

We turn now to $\rho_2$.  Combining \eqref{step to rho2} and \eqref{E:psi g} gives
\begin{align*}
\biggl| \int \bigl(g(x) - \tfrac1{2\kappa}\bigr)^2\psi(x)^2\,dx - \kappa^{-2} \bigl\| R_0(2\kappa)(q\psi) \bigr\|_{L^2(\R)}^2 \biggr|
	\lesssim \kappa^{-13/2} \biggl[ 1+ \int_0^1 |q(x)|^2\,dx \biggr].
\end{align*}
Therefore,
\begin{align*}
\int \rho_2(x)\psi(x)^2\,dx = - 4 \kappa \bigl\| R_0(2\kappa)(q\psi) \bigr\|_{L^2(\R)}^2
	+ O\biggl( \kappa^{-7/2} \biggl[ 1+ \int_0^1 |q(x)|^2\,dx \biggr] \biggr) .
\end{align*}

We now consider $\rho_3$.  From \eqref{R I2} we have
\begin{align*}
\Bigl\| \sqrt{R_0}\,h\,\sqrt{R_0} \Bigr\|_{\I_2}^2 \lesssim \kappa^{-1} \| h \|_{L^1}^2 \int\frac{d\xi}{\xi^2+4\kappa^2} \lesssim \kappa^{-2} \| h \|_{L^1}^2 .
\end{align*}
Employing this to estimate the series \eqref{E:g series} via duality, we obtain
$$
\bigl\| g - \tfrac1{2\kappa} \bigr\|_{L^\infty} \lesssim \kappa^{-3/2} \delta .
$$
Combining this with \eqref{step to rho2} yields
\begin{align*}
\int \rho_3(x)\psi(x)^2\,dx \lesssim \kappa^{-7/2} \biggl[ 1+ \int_0^1 |q(x)|^2\,dx \biggr] .
\end{align*}

To derive the claim \eqref{E:psi rho} by combining our results on each part of $\rho$, we need one additional identity, namely,
$$
2\kappa^2 \tr \Bigl\{ R_0\psi q R_0 \psi q R_0 \Bigr\} - 4 \kappa \bigl\| R_0(2\kappa)(q\psi) \bigr\|_{L^2(\R)}^2
	= \tfrac{1}{2\kappa} \int \frac{|\widehat{q\psi} (\xi)|^2\,d\xi}{\xi^2+4\kappa^2} .
$$
That this equality holds follows from the same calculation we carried out in \eqref{delta2alpha}.

The last estimate \eqref{E:psi dumb} is relatively trivial.  As $\int \kappa e^{-2\kappa|x-y|}\,dy =1$,
\begin{align*}
\biggl| \psi(x)^2 - \int \kappa e^{-2\kappa|x-y|} \psi(y)^2 \,dy \biggr| \lesssim \int \kappa e^{-2\kappa|y-x|} |x-y| \,dy \lesssim \kappa^{-1},
\end{align*}
which settles the case $|x|\leq 10$.  For $|x|\geq 10$, we have
\begin{align*}
\biggl| \psi(x)^2 - \int \kappa e^{-2\kappa|x-y|} \psi(y)^2 \,dy \biggr| = \int_0^1 \kappa e^{-2\kappa|y-x|} \psi(y)^2 \,dy	\lesssim \kappa e^{-\kappa|x|} ,
\end{align*}
which offers more than enough decay in both $x$ and $\kappa$.
\end{proof}

\begin{lemma}\label{L:kappa ls}
There is a $\delta>0$ so that the following is true: Let $Q$ be a family of Schwartz solutions to \eqref{KdV} on the line such that $\{q(0):q\in Q\}$ is an equicontinuous subset of $B_\delta$.  Then, for any $\psi\in C^\infty_c(\R)$ and any $T>0$, we have
\begin{equation}\label{E:loc smoothing hi}
\lim_{\kappa\to\infty} \ \sup_{q\in Q} \ \int_{-T}^T \int \frac{\xi^2 |\widehat{q\psi} (t,\xi)|^2\,d\xi}{\xi^2+4\kappa^2} \,d\xi\,dt  = 0.
\end{equation}
\end{lemma}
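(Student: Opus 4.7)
The plan is to apply the microscopic conservation law \eqref{E:l5.1h} at a large parameter $\kappa$, paired in space against a bounded antiderivative $\phi$ of $\psi^2$ (so $\phi'=\psi^2$, $\phi$ bounded, and $\int\rho\phi\,dx$ well-defined since $\rho\in L^1$), and then integrated in $t$ over $[-T,T]$. The result is the identity
\begin{equation*}
\int_{-T}^T\!\int\psi(x)^2\Bigl\{\tfrac32\bigl[e^{-2\kappa|\cdot|}*q^2\bigr]+2q\bigl[\kappa-\tfrac{1}{2g}\bigr]-4\kappa^2\rho\Bigr\}\,dx\,dt = -\Bigl[\int\rho(t,x)\phi(x)\,dx\Bigr]_{-T}^T.
\end{equation*}

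The key step is to combine the three integrands on the left using \eqref{E:psi dumb}, \eqref{E:psi 1/g}, and \eqref{E:psi rho}: at each fixed $t$ their leading contributions are (respectively) $\tfrac{3}{2\kappa}\|q\psi\|_{L^2}^2$, $-4\kappa\|q\psi\|_{H^{-1}_\kappa}^2$, and $-2\kappa\|q\psi\|_{H^{-1}_\kappa}^2$, so Plancherel together with the identity $1-\tfrac{4\kappa^2}{\xi^2+4\kappa^2}=\tfrac{\xi^2}{\xi^2+4\kappa^2}$ collapses their sum to exactly $\tfrac{3}{2\kappa}\int \xi^2|\widehat{q\psi}(\xi)|^2/(\xi^2+4\kappa^2)\,d\xi$. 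Multiplying by $\tfrac{2\kappa}{3}$ then expresses the quantity we must bound as $-\tfrac{2\kappa}{3}[\int\rho\phi\,dx]_{-T}^T$ plus $\kappa$-rescaled errors inherited from \eqref{E:psi dumb}--\eqref{E:psi rho}.

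The boundary term is handled by equicontinuity: by Proposition~\ref{P:equi}, the entire orbit $\{q(t):q\in Q,\,t\in\R\}$ inherits equicontinuity from $\{q(0):q\in Q\}$, and then Lemma~\ref{L:equi 2} gives $\kappa\alpha(\kappa;q(t))\to 0$ uniformly, which dominates $\kappa|\int\rho\phi\,dx|$ via $\|\phi\|_{L^\infty}$. The rescaled errors carry a residual factor of at most $\kappa^{-1/2}$ relative to the main terms, and after integration in time they are controlled by $\int_{-T}^T(1+\int_J|q|^2\,dx)\,dt$ for an interval $J$ containing $\supp\psi$; this is finite uniformly for $q\in Q$ by Lemma~\ref{L:loc smoothing} applied after translation to a finite cover of $J$ by unit intervals.

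I expect the main obstacle to be recognizing the precise cancellation at leading order: the two large contributions $\tfrac{3}{2\kappa}\|q\psi\|_{L^2}^2$ and $-6\kappa\|q\psi\|_{H^{-1}_\kappa}^2$ are both of order $\tfrac{1}{\kappa}\|q\psi\|_{L^2}^2$ at high frequencies and must fit together exactly to produce the Fourier weight $\xi^2/(\xi^2+4\kappa^2)$. Once this algebraic identity is isolated, the remainder of the argument reduces to careful bookkeeping of residual $\kappa$-powers using \eqref{E:psi g}--\eqref{E:psi dumb} in conjunction with Lemma~\ref{L:loc smoothing} and Cauchy--Schwarz. A minor subsidiary point is that those pointwise estimates were formulated for $\psi$ supported in $(0,1)$; for general $\psi\in C^\infty_c(\R)$ they extend by a partition of unity together with the translation invariance of \eqref{KdV}.
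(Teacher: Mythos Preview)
Your proposal is correct and follows essentially the same approach as the paper's own proof: both integrate the microscopic conservation law \eqref{E:l5.1h} against an antiderivative $\phi$ of $\psi^2$, use the estimates \eqref{E:psi 1/g}, \eqref{E:psi rho}, \eqref{E:psi dumb} to extract the leading contributions, observe the same algebraic cancellation yielding the weight $\xi^2/(\xi^2+4\kappa^2)$, and control the boundary term via $\kappa\alpha(\kappa;q)\to 0$ from equicontinuity. The only cosmetic difference is that the paper multiplies through by $\kappa$ at the outset (leading to the labelled terms \eqref{lsk1}--\eqref{lsk4}) whereas you multiply by $\tfrac{2\kappa}{3}$ at the end, and the paper simply declares ``without loss of generality $\supp(\psi)\subset(0,1)$'' where you invoke a partition of unity; both reductions are valid.
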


\begin{proof}
Without loss of generality, we may assume that $\supp(\psi)\subset (0,1)$.  Throughout the proof, we regard $\psi$ and $T$ as fixed and implicit constants may depend on them.   Multiplying \eqref{E:l5.1h} by $\kappa$ and integrating against
$$
\phi(x) = \int_{-\infty}^x \psi(y)^2\,dy,
$$
we obtain
\begin{align}
\int \kappa [\rho(T,x)-\rho(-T,x)] & \phi(x) \,dx \label{lsk1}\\
&= -\tfrac32 \int_{-T}^T \! \iint |q(t,x)|^2 \kappa e^{-2\kappa|x-y|}\psi(y)^2 \,dx\,dy\,dt \label{lsk2}\\
&\quad - 2\kappa \int_{-T}^T \! \int q(t,x)\bigl[ \kappa - \tfrac{1}{2g(t,x)}\bigr] \psi(x)^2 \,dx\,dt \label{lsk3}\\
&\quad + 4\kappa^3 \int_{-T}^T \! \int \rho(t,x) \psi(x)^2 \,dx\,dt.\label{lsk4}
\end{align}
We will discuss these terms one at a time.

From Proposition~\ref{P:Intro rho}, we have
$$
\bigl| \text{\eqref{lsk1}} \bigr| \lesssim \kappa \alpha(\kappa;q),
$$
which converges to zero as $\kappa\to\infty$ uniformly for $q\in Q$; see Lemma~\ref{L:equi 2}.

Combining \eqref{E:psi dumb} and Lemma~\ref{L:loc smoothing} yields
\begin{align*}
\biggl| \eqref{lsk2} + \tfrac32 \int_{-T}^T \! \int |q(t,x)|^2 \psi(x)^2 \,dx\,dt \biggr| \lesssim \kappa^{-1},
\end{align*}
or equivalently, by Plancherel,
\begin{align*}
\biggl| \eqref{lsk2} + \tfrac32 \int_{-T}^T \! \int |\widehat{\psi q}(t,\xi)|^2 \,d\xi\,dt \biggr| \lesssim \kappa^{-1}.
\end{align*}

From \eqref{E:psi 1/g} and Lemma~\ref{L:loc smoothing}, we have
\begin{align*}
\biggl| \eqref{lsk3} - \int_{-T}^T \! \iint \psi(x)q(t,x) \kappa e^{-2\kappa|x-y|} q(t,y)\psi(y) \,dx\,dy\,dt \biggr| \lesssim \kappa^{-1/2},
\end{align*}
or equivalently (see \eqref{R resolvent}),
\begin{align*}
\biggl| \eqref{lsk3} - \int_{-T}^T \! \int \frac{4\kappa^2|\widehat{\psi q}(t,\xi)|^2}{\xi^2+4\kappa^2} \,d\xi\,dt \biggr| \lesssim \kappa^{-1/2}.
\end{align*}

From \eqref{E:psi rho} and Lemma~\ref{L:loc smoothing},
$$
\biggl| \eqref{lsk4} - \tfrac{1}{2} \int_{-T}^T \! \int \frac{4\kappa^2|\widehat{\psi q}(t,\xi)|^2}{\xi^2+4\kappa^2} \,d\xi\,dt \biggr|
	\lesssim \kappa^{-1/2}.
$$

The claim \eqref{E:loc smoothing hi} now follows from recombining \eqref{lsk1}--\eqref{lsk4}.
\end{proof}

\begin{proof}[Proof of Proposition~\ref{P:loc smoothing}]
By the same scaling argument as in Theorem~\ref{T:converge}, it suffices to prove \eqref{E:loc smoothing n} for solutions that are small in $L^\infty_t H^{-1}_x$.  Moreover, we may assume that $q_n$ are Schwartz solutions, since Proposition~\ref{P:loc smoothing} in this reduced generality provides precisely the tool necessary to obtain the full version by approximation.

Let us fix $\psi\in C^\infty_c(\R)$, not identically zero, with $\supp(\psi)\subseteq(0,1)$.  By a simple covering argument, it suffices to show that
\begin{align}\label{E:ls 1}
\lim_{n\to\infty} \ \sup_{x_0\in\R} \ \int_{-T}^T \Bigl\|\bigl[q_n(t,x)-q(t,x)\bigr] \psi(x+x_0) \Bigr\|_{L^2(\R)}^2 \,dt  = 0.
\end{align}
We will do this by breaking into high- and low-frequency components, using a refined local smoothing argument to handle the former, and applying Theorem~\ref{T:converge} to handle the latter. The frequency decomposition is based on the multipliers
$$
m_\text{hi}(\xi) = \frac{|\xi|}{\sqrt{\xi^2+4\kappa^2}} \qtq{and}  m_\text{lo}(\xi) = \sqrt{1 - m_\text{hi}(\xi)^2} = \frac{2\kappa}{\sqrt{\xi^2+4\kappa^2}}.
$$

We begin with the low frequencies.  For $\kappa$ fixed, Theorem~\ref{T:converge} implies
\begin{align}
\lim_{n\to\infty} &\ \sup_{x_0\in\R}\ \int_{-T}^T  \; \Bigl\|m_\text{lo}(-i\partial_x) \Bigl(\bigl[q_n(t)-q(t)\bigr] \psi(\cdot + x_0) \Bigr)\Bigr\|_{L^2(\R)}^2 \,dt\notag\\
&\lesssim \kappa T  \lim_{n\to\infty}\ \sup_{x_0\in\R}\  \Bigl\| \bigl[q_n(t,x)-q(t,x)\bigr] \psi(x+x_0) \Bigr\|_{L^\infty_t H^{-1}_x ([-T,T]\times\R)}\label{ls  low} \\
&\lesssim \kappa T \|\psi\|_{H^1(\R)}  \lim_{n\to\infty} \bigl\| q_n(t,x)-q(t,x) \bigr\|_{L^\infty_t H^{-1}_x ([-T,T]\times\R)} =0.\notag
\end{align}

We turn now to the high-frequency part.  As the sequence $q_n(0)$ is convergent in $H^{-1}(\R)$, it is equicontinuous there.  Proposition~\ref{P:equi} then guarantees that $\{q_n(t) : t\in\R\text{ and } n\in\N\}$ is also $H^{-1}(\R)$-equicontinuous.  Thus Lemma~\ref{L:kappa ls} implies
\begin{equation}\label{ls n high}
\lim_{\kappa\to\infty} \ \sup_{n} \int_{-T}^T \; \Bigl\|m_\text{hi}(-i\partial_x) [q_n(t)\psi(\cdot+x_0)] \Bigr\|_{L^2(\R)}^2 \,dt = 0.
\end{equation}
Note that by Theorem~\ref{T:converge} and weak lower-semicontinuity, it then follows that
\begin{equation}\label{ls q high}
\lim_{\kappa\to\infty} \int_{-T}^T  \; \Bigl\|m_\text{hi}(-i\partial_x) [q(t)\psi(\cdot+x_0)] \Bigr\|_{L^2(\R)}^2 \,dt = 0.
\end{equation}

We are now ready to put the pieces together.  From \eqref{ls n high} and \eqref{ls q high}, we see that we can make the high-frequency contribution to LHS\eqref{E:ls 1} small, uniformly in $n$, by choosing $\kappa$ sufficiently large.  But then by \eqref{ls low}, we may make the low-frequency contribution as small as we wish by choosing $n$ sufficiently large.  This proves \eqref{E:ls 1} and so \eqref{E:loc smoothing n}.

Lastly, integration by parts shows that Schwartz solutions are distributional solutions of the initial-value problem, which is to say
\begin{align*}
\int h(0,x)q(0,x)\,dx + \int_0^\infty \!&\! \int [\partial_t h](t,x) q(t,x) \,dx\,dt \\
& =  \int_0^\infty \!\! \int - [\partial_x^3 h](t,x) q(t,x) + 3[\partial_x h](t,x) q(t,x)^2 \,dx\,dt
\end{align*}
for every $h\in C^\infty_c(\R\times\R)$ (as well as the analogous statement backwards in time).  This now extends to $H^{-1}$ solutions via Corollary~\ref{C:1} and Proposition~\ref{P:loc smoothing}.
\end{proof}

\appendix
\section{5th order KdV}\label{S:A}

The first polynomial conservation law for \eqref{KdV} beyond those discussed in the body of the paper is
\begin{align*}
H_\text{5th}(q) := \int \tfrac12 q''(x)^2 + 5 q(x)q'(x)^2 + \tfrac52 q(x)^4\,dx,
\end{align*}
which generates the dynamics
\begin{align}\label{5th}
\ddt q = q^{(5)} -20 q'q'' - 10 qq''' +30 q^2q' = \bigl(q^{(4)} - 10qq'' - 5 [q']^2+10q^3\bigr)'
\end{align}
under the Poisson structure~\eqref{3.0}.

Local well-posedness of \eqref{5th} in $H^s(\R)$ for $s\geq \tfrac54$ was shown in \cite{MR3096990,MR3301874}, which immediately implies global well-posedness in the energy space $H^2(\R)$; see also \cite{MR2653659} for a thorough discussion of results in Fourier--Lebesgue spaces.  As reviewed therein, these papers represent the culmination of a considerable body of prior work.  In particular, we do not know of any further progress on the low regularity problem on the line.  On the torus, however, an optimal well-posedness result was obtained in  \cite{MR3739929}.  Specifically, they prove that \eqref{5th} is well-posed in $L^2(\R/\Z)$ and moreover, that the solution map does not admit a continuous extension to $H^s(\R/\Z)$ for any $s<0$.  Our goal in this appendix is to show that a direct adaptation of the methods introduced in this paper yields an analogous well-posedness result on the line:

\begin{theorem}\label{T:5th}
Equation \eqref{5th} is globally well-posed in $L^2(\R)$.
\end{theorem}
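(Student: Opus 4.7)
The plan is to transfer the method developed in the body of the paper from \eqref{KdV} posed in $H^{-1}(\R)$ to \eqref{5th} posed in $L^2(\R)$. The architecture stays the same: the functional $\alpha(\kappa;q)$ remains our generating conservation law; we construct an approximating Hamiltonian $H_\kappa^{(5)}$ that converges to $H_\text{5th}$ as $\kappa\to\infty$; and equicontinuity at the target regularity is propagated by conservation of $\alpha$. What changes is the order at which we truncate the asymptotic expansion of $\alpha$ and the topology in which we measure.

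Extending the asymptotic expansion quoted just before \eqref{H kappa defn} by one further term yields
\begin{align*}
\alpha(\kappa;q) = \tfrac{1}{4\kappa^3}P(q) - \tfrac{1}{16\kappa^5}H_\text{KdV}(q) + \tfrac{c}{\kappa^7}H_\text{5th}(q) + O(\kappa^{-9})
\end{align*}
for an explicit nonzero constant $c$. Accordingly, I would set
\begin{align*}
H_\kappa^{(5)} := \tfrac{1}{c}\Bigl[\kappa^7\alpha(\kappa;q) - \tfrac{\kappa^4}{4}P(q) + \tfrac{\kappa^2}{16}H_\text{KdV}(q)\Bigr],
\end{align*}
so that the Hamiltonian vector field reads
\begin{align*}
\ddt q = \tfrac{1}{c}\Bigl[-\kappa^7 g'(x;\kappa,q) - \tfrac{\kappa^4}{4}q'(x) + \tfrac{\kappa^2}{16}\bigl(-q'''(x) + 6q(x)q'(x)\bigr)\Bigr].
\end{align*}
The bracketed KdV vector field is well-posed on $L^2(\R)$ by the theorem of Bourgain; the $q'$ term is a translation; and $-\kappa^7 g'(\,\cdot\,;\kappa,q)$ is, by Proposition~\ref{P:diffeo}, a real-analytic mapping of $L^2(\R)$ into $H^1(\R)$, hence a Lipschitz perturbation on $L^2$. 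Standard perturbation of the $L^2$-well-posed KdV semiflow then yields global well-posedness of $H_\kappa^{(5)}$ on $L^2$. Moreover, each constituent Poisson commutes with $\alpha(\vk;\,\cdot\,)$ for every $\vk\geq 1$, so $\alpha(\vk)$ is conserved along the $H_\kappa^{(5)}$ flow.

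Conservation of $\alpha(\kappa;q)$ under \eqref{5th} and commutativity of the two flows on Schwartz class follow from the Lax pair for \eqref{5th}: there is an anti-self-adjoint fifth-order operator $P_5$ such that $\ddt L = [P_5,L]$, which unitarily conjugates $L(t)$ to $L(0)$, preserving the transmission coefficient and hence $\alpha(\kappa)$. For the equicontinuity step, the $L^2$-analogue of Lemma~\ref{L:equi 2} rests on the identity
\begin{align*}
\tfrac{1}{4}\|q\|_{L^2}^2 - \kappa^3\alpha(\kappa;q) \approx \int\frac{\xi^2\,|\hat q(\xi)|^2\,d\xi}{4(\xi^2+4\kappa^2)}
\end{align*}
and the observation that a bounded subset $Q\subset L^2(\R)$ is equicontinuous iff this difference tends to zero uniformly in $q\in Q$ as $\kappa\to\infty$. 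Since $\alpha(\kappa)$ and $P$ are conserved by both the $H_\kappa^{(5)}$ and the $H_\text{5th}$ flows, equicontinuity is propagated.

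With these ingredients, the proof of Theorem~\ref{T:converge} adapts: after reducing to small $L^2$ data via the scaling \eqref{q scaling}, commutativity allows one to split
\begin{align*}
q_n(t) = e^{tJ\nabla(H_\text{5th} - H_\kappa^{(5)})}\circ e^{tJ\nabla H_\kappa^{(5)}}\,q_n(0)
\end{align*}
for Schwartz solutions $q_n$ to \eqref{5th}. The first factor converges in $n$ by $L^2$-well-posedness of $H_\kappa^{(5)}$; the second factor is shown to tend to the identity as $\kappa\to\infty$ by tracking the evolution of the good unknown $\vk-\tfrac{1}{2g(x;\vk,q(t))}$ at a fixed auxiliary energy $\vk$, using Lemma~\ref{L:G ibp} to integrate by parts, and closing via the bootstrap of Lemma~\ref{L:equi 1}(ii). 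Corollaries~\ref{C:1} and~\ref{C:2} then adapt to deliver Theorem~\ref{T:5th}. The main obstacle, in my view, lies in this last step: because \eqref{5th} contains $q^{(5)}$ together with the cross terms $q'q''$, $qq'''$, and $q^2q'$, the analogue of \eqref{H kappa flow g} becomes considerably more intricate, and systematic use of both the resolvent expansion and the elliptic identity \eqref{E:l5.1a} will be needed to eliminate the highest-order contributions before the difference estimate closes.
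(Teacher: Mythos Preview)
Your proposal is correct and follows the same architecture as the paper's proof. Two tactical simplifications in the paper render the ``main obstacle'' you anticipate considerably lighter than you expect. First, the paper observes that $H^\text{5th}_\kappa = 4\kappa^2\bigl[H_\text{KdV} - H_\kappa\bigr]$ exactly, so the approximating flow is literally a time-rescaled composition of the KdV and $H_\kappa$ flows already analyzed in Sections~\ref{S:3}--\ref{S:5}; this gives well-posedness and the evolution of $1/g(x;\vk)$ under $H^\text{5th}_\kappa$ for free from \eqref{E:l5.1g} and \eqref{H kappa flow g}, without any perturbation argument off Bourgain's $L^2$ theory. Second, for the full \eqref{5th} flow the paper rewrites the equation via the biHamiltonian relation
\[
\ddt q = (-\partial_x^3 + 2q\partial_x + 2\partial_x q + 4\vk^2\partial_x)(-q''+3q^2-4\vk^2 q) + 16\vk^4 q',
\]
after which a single application of Lemma~\ref{L:G ibp} yields a clean closed formula for $\tfrac{d}{dt}\tfrac{1}{2g(x;\vk)}$; no intricate cancellations among $q'q''$, $qq'''$, $q^2q'$ are needed. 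The difference of the two $1/g$-evolutions then reduces, after rearrangement, to expressions governed by the single convergence
\[
16\kappa^5\bigl(g(\cdot;\kappa)-\tfrac{1}{2\kappa}\bigr) + 4\kappa^2 q \longrightarrow -q'' + 3q^2 \quad\text{in } H^{-2}(\R),
\]
uniformly on $L^2$-equicontinuous sets. This is the $L^2$-analogue of \eqref{uniform to q} and is the one genuinely new estimate; it is obtained by expanding \eqref{E:g series} two terms further than before. Your equicontinuity criterion and overall scheme are exactly those of the paper.
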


As the proof of this theorem follows closely the treatment of \eqref{KdV} in the main body of the paper, we shall focus on recording the central estimates and identities, rather than recapitulating the arguments.

By the rescaling argument presented earlier and the fact that \eqref{5th} preserves the $L^2$ norm, it suffices to show well-posedness of this equation in
$$
B_\delta:=\{ q\in L^2(\R) : \| q\|_{L^2} \leq \delta\} \qquad\text{(endowed with the $L^2$ topology)}
$$
for some $\delta>0$.  As this is a subset of the ball \eqref{B delta}, all results of Section~\ref{S:2} and~\ref{S:3} still apply. Nevertheless some updating is required by the shift in regularity considered here:

\begin{prop}\label{P:A2}
There exists $\delta>0$ so that the following are true:\\
\emph{(a)} For all $\kappa\geq 1$, the two mappings in \eqref{diffeos} are diffeomorphisms from $B_\delta$ onto a neighbourhood of the origin in $H^{+2}(\R)$.\\
\emph{(b)} For all $q\in B_\delta$,
\begin{align}\label{A5tozero}
16\kappa^5\bigl(g-\tfrac1{2\kappa}\bigr) + 4\kappa^2 q \longrightarrow -q'' + 3q^2 \qquad\text{in $H^{-2}(\R)$ as $\kappa\to\infty$.}
\end{align}
Moreover, convergence is uniform on $L^2$-bounded and equicontinuous sets.
\end{prop}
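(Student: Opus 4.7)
For part (a), the plan is to mirror the proof of Proposition~\ref{P:diffeo}, working throughout with the scale $L^2 \to H^2$ in place of $H^{-1} \to H^1$. The enabling estimate is
\[
\bigl\|\sqrt{R_0}\,q\,\sqrt{R_0}\bigr\|_{\I_2}^2 \;\leq\; \frac{1}{\kappa} \int \frac{|\hat q(\xi)|^2\,d\xi}{\xi^2+4\kappa^2} \;\leq\; \frac{1}{4\kappa^3} \|q\|_{L^2}^2,
\]
which comes from \eqref{R I2} and controls the Born series \eqref{E:g series} term by term in $H^2$. Combined with \eqref{g stronger mapping} at $s=1$, which gives $\|g'\|_{H^1} \lesssim \|q\|_{L^2}$, this shows $q\mapsto g - \tfrac{1}{2\kappa}$ is real-analytic from $B_\delta$ into $H^2(\R)$. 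For the inverse function theorem, one observes that $dg\bigr|_{q\equiv 0} = -\kappa^{-1} R_0(2\kappa)$ is an isomorphism $L^2 \to H^2_\kappa$ with condition number $1$, since $\|R_0(2\kappa) f\|_{H^2_\kappa} = \|f\|_{L^2}$. A Born-series bound then yields $\|dg|_q - dg|_{q\equiv 0}\|_{L^2 \to H^2_\kappa} \lesssim \delta/\kappa$, so the contraction-mapping form of the IFT produces a diffeomorphism onto a neighbourhood of zero in $H^2_\kappa \simeq H^2$, uniformly in $\kappa \geq 1$. The second map in \eqref{diffeos} follows by composing with $h \mapsto \tfrac{2\kappa h}{1 + 2\kappa h}$, exactly as in Proposition~\ref{P:diffeo}, using that $H^2$ is an algebra embedding into $L^\infty$.

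For part (b), expand $g - \tfrac{1}{2\kappa} = \sum_{\ell \geq 1} (-1)^\ell g_\ell$ where $g_\ell(x) = \langle \sqrt{R_0}\delta_x, (\sqrt{R_0}q\sqrt{R_0})^\ell \sqrt{R_0}\delta_x\rangle$, and treat three regimes separately. The $\ell = 1$ term gives $g_1 = -\kappa^{-1} R_0(2\kappa)q$, so the linear contribution is $16\kappa^5(-g_1) + 4\kappa^2 q = 4\kappa^2 q - 16\kappa^4 R_0(2\kappa) q$, with Fourier multiplier $\tfrac{4\kappa^2\xi^2}{\xi^2+4\kappa^2} \to \xi^2$. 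Convergence to $-q''$ in $H^{-2}$ reduces to showing
\[
\int \frac{\xi^8\, |\hat q(\xi)|^2\,d\xi}{(\xi^2+4\kappa^2)^2 (\xi^2 + 4)^2} \to 0,
\]
which holds uniformly on $L^2$-bounded equicontinuous sets by splitting at $|\xi| = M$: the low-frequency piece is $O(M^8/\kappa^4)\|q\|_{L^2}^2$, and the high-frequency piece is dominated by $\int_{|\xi|>M}|\hat q|^2 d\xi$, small uniformly as $M\to\infty$ by $L^2$-equicontinuity.

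For the quadratic ($\ell=2$) term, the substitution $y_i = x + u_i/\kappa$ yields
\[
16\kappa^5 g_2(x) = \iint K(u,v)\, q(x + u/\kappa)\, q(x + v/\kappa)\,du\,dv,\qquad K(u,v) = 2e^{-(|u|+|v|+|u-v|)},
\]
and a direct calculation gives $\iint K = 3$, so formally $16\kappa^5 g_2(x) \to 3 q(x)^2$. To upgrade this to convergence in $H^{-2}$ uniformly on equicontinuous sets, pass to Fourier variables: writing $\tilde m(p,q) := \hat K(p,q) - 3$ (a bounded continuous function with $\tilde m(0,0) = 0$),
\[
\widehat{16\kappa^5 g_2 - 3q^2}(\xi) = \tfrac{1}{\sqrt{2\pi}} \int \tilde m\bigl(\tfrac{\xi-\eta}{\kappa},\tfrac{\eta}{\kappa}\bigr) \hat q(\eta)\hat q(\xi - \eta)\,d\eta,
\]
then decompose $\hat q = \hat q\,\mathbf 1_{|\cdot| \leq R} + \hat q\,\mathbf 1_{|\cdot| > R}$. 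The doubly-low-frequency contribution is supported on $|\xi|\leq 2R$, where $|\tilde m|$ is uniformly small for $\kappa \gg R$; the remaining pieces are dominated, via Young's inequality and $|\tilde m|$ bounded, by $C\|q_{>R}\|_{L^2} \|q\|_{L^2}$, which is small uniformly on equicontinuous sets for $R$ large. Finally, for $\ell \geq 3$, Hilbert--Schmidt bounds give $\|g_\ell\|_{L^2} \lesssim \kappa^{-3(\ell+1)/2}\|q\|_{L^2}^\ell$, so $\|16\kappa^5 \sum_{\ell \geq 3} g_\ell\|_{L^2} = O(\kappa^{-1})$, absorbed into the $H^{-2}$ target.

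The main obstacle will be establishing the convergence of the quadratic term uniformly on equicontinuous sets rather than pointwise in $q$; the frequency-splitting of $\hat q$ relative to the scale set by the kernel $\hat K$ is designed precisely to handle this. Everything else amounts to routine repetition of the Section~\ref{S:2} estimates with the Hilbert--Schmidt improvements afforded by $q\in L^2$.
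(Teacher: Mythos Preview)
Your proof is correct and follows the same overall Born-series strategy as the paper. For part (a), both you and the paper use the series \eqref{E:g series} together with the inverse function theorem, with $dg|_{q\equiv 0}=-\kappa^{-1}R_0(2\kappa):L^2\to H^2_\kappa$ now playing the role it played from $H^{-1}_\kappa\to H^1_\kappa$ in Proposition~\ref{P:diffeo}. The paper makes the ``Born-series bound'' you invoke explicit via the grouped estimates $\|R_0 f R_0\|_{\op}\lesssim\|f\|_{H^{-2}_\kappa}$, $\|qR_0q\|_{\I_1}=\tfrac1{2\kappa}\|q\|_{L^2}^2$, and $\|R_0q\|_{\op}\lesssim\kappa^{-3/2}\|q\|_{L^2}$; be aware that the naive Hilbert--Schmidt pairing $\|\sqrt{R_0}f\sqrt{R_0}\|_{\I_2}\|\sqrt{R_0}q\sqrt{R_0}\|_{\I_2}$ only delivers $H^{-1}_\kappa$ duality, so some regrouping of this kind is genuinely needed to reach $H^2_\kappa$.

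For part (b), the linear ($\ell=1$) and tail ($\ell\geq 3$) arguments match the paper's (modulo a harmless sign slip in your formula for $g_1$), and your tail bound in $L^2$ via pure $\I_2$ estimates is in fact a bit cleaner than the paper's $H^{-2}$ version. The quadratic term is where the two approaches genuinely diverge: the paper computes $\hat h_2(\xi)$ explicitly, expands $16\kappa^5\hat h_2(\xi) - 3\widehat{q^2}(\xi)$ in partial fractions, and reads off convergence to zero in $L^1(\R)$ (hence in $H^{-2}$). Your route---rescaling the physical-space kernel to isolate a fixed $L^1$ kernel $K$ with $\iint K=3$, then frequency-splitting $\hat q$ against the continuous symbol $\tilde m=\hat K-3$---is more conceptual and sidesteps the algebra, at the cost of yielding convergence only in $H^{-2}$ rather than $L^1$. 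Both suffice for the application.
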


\begin{proof}
As in the main body of the paper, the statements follow readily from certain basic estimates on the terms in the series \eqref{E:g series}.  As we must delve more deeply into the series than previously, we adopt the following notation
\begin{align}\label{A6}
h_\ell(x;\kappa) := (-1)^\ell  \langle \delta_x, R_0 (qR_0)^\ell \delta_x\rangle \qtq{so that} g(x;\kappa)-\tfrac1{2\kappa} = \sum_{\ell=1}^\infty h_\ell(x;\kappa).
\end{align}

The same computations that yielded \eqref{R I2} also show that
\begin{align*}
\hat{h}_1(\xi) =  - \frac{\hat q(\xi)}{\kappa(\xi^2+4\kappa^2)}.
\end{align*}
Easy consequences of this include
\begin{equation}\label{E:h1}
\| h_1 \|_{H^2_\kappa} = \kappa^{-1} \|q\|_{L^2}
\end{equation}
and
\begin{equation}\label{E:h1conv}
16\kappa^5h_1 + 4\kappa^2 q + q'' \longrightarrow 0 \quad\text{in $H^{-2}$ as $\kappa\to\infty$;}
\end{equation}
moreover, this convergence is uniform on $L^2$-bounded and equicontinuous sets.

Elementary, though rather tiresome, calculations show that
$$
\hat{h}_2(\xi) = \frac{1}{2\kappa\sqrt{2\pi}} \int \frac{(\xi-\eta)^2 + \eta^2 + \xi^2+24\kappa^2}{[\xi^2+4\kappa^2][\eta^2+4\kappa^2][(\xi-\eta)^2+4\kappa^2]} \hat q(\xi-\eta)\hat q(\eta)\,d\eta,
$$
which should be compared with
$$
\widehat{\,q^2\,}\!(\xi) = \frac{1}{\sqrt{2\pi}} \int \hat q(\xi-\eta)\hat q(\eta)\,d\eta.
$$
Indeed, expanding out the difference in partial fractions reveals that as $\kappa\to\infty$,
\begin{equation}\label{E:h2conv}
16\kappa^5 h_2 - 3q^2 \longrightarrow 0 \quad\text{in $L^1(\R)$, and so also in $H^{-2}(\R)$.}
\end{equation}
Moreover, convergence (even in $L^1$) is uniform for $q$ belonging to $L^2$-bounded and equicontinuous sets.

For any $\ell\geq 2$ and any Schwartz $f$,
\begin{align*}
\biggl| \int f(x) h_\ell(x)\,dx \biggr| = \bigl| \tr \bigl( R_0 f R_0 q (R_0q)^{\ell-1} \bigr)\bigr| \leq \| R_0 f R_0\|_{\op} \| q R_0 q \|_{\I_1} \| R_0 q\|_{\op}^{\ell-2},
\end{align*}
which we shall now use to estimate $h_\ell$ via duality. First, we apply the following elementary estimates
\begin{gather*}
\| R_0 f R_0\|_{\op} \lesssim \|f\|_{H^{-2}_\kappa},\quad
 \| q R_0 q \|_{\I_1} = \tfrac1{2\kappa} \|q\|_{L^2}^2, \qtq{and} \| R_0 q\|_{\op} \lesssim \kappa^{-3/2}\|q\|_{L^2},
\end{gather*}
to deduce that the series \eqref{A6} converges uniformly in $H^2_\kappa$ on $B_\delta$, for $\delta>0$ sufficiently small.  Indeed, this analysis readily yields also that
\begin{align*}
\bigl\| g-\tfrac1{2\kappa} - h_1 \bigr\|_{H^{2}_\kappa} \lesssim \kappa^{-1} \|q\|_{L^2}^2 \qtq{and}
    \bigl\| dg - dh_1 \bigr\|_{L^2\to H^{2}_\kappa} \lesssim \kappa^{-1} \delta .
\end{align*}
Combined with \eqref{E:h1}, these two estimates allow one to readily complete the proof of part (a) by mimicking the arguments used to prove Proposition~\ref{P:diffeo}; compare, in particular, \eqref{g H1 bound} and \eqref{inverse input}.

It remains to justify \eqref{A5tozero}.  In view of \eqref{E:h1conv} and \eqref{E:h2conv}, this follows from
\begin{align*}
\bigl\| g-\tfrac1{2\kappa} - h_1 - h_2 \bigr\|_{H^{-2}} \lesssim \kappa^{-13/2} \|q\|_{L^2}^3,
\end{align*}
which follows in turn by incorporating the elementary bound
\begin{gather*}
\| R_0 f R_0\|_{\op} \lesssim \kappa^{-4} \|f\|_{L^\infty} \lesssim \kappa^{-4} \|f\|_{H^{2}}
\end{gather*}
into the preceding duality analysis.
\end{proof}

Our next task is to decide on a proper replacement for $H_\kappa$.  Proceeding formally, one is lead to the asymptotic expansion
$$
\alpha(\kappa;q) = \tfrac{1}{4\kappa^3} P(q)  - \tfrac{1}{16\kappa^5} H_\text{KdV}(q) + \tfrac{1}{64\kappa^7} H_\text{5th}(q) + O(\kappa^{-9})
$$
and thus to a natural proposal for renormalized Hamiltonians, namely,
\begin{align*}
H^\text{5th}_\kappa (q) := 64\kappa^7 \alpha(\kappa;q) - 16 \kappa^4 P(q) + 4\kappa^2 H_\text{KdV}(q) = 4\kappa^2\bigl[ H_\text{KdV}(q) - H_\kappa(q) \bigr].
\end{align*}

We will now discuss analogues of Propositions~\ref{L:5.2}, \ref{P:H kappa}, and~\ref{P:equi}:

\begin{prop} There exists $\delta>0$ so that the following hold:\\
\emph{(a)} The flow generated by $H^\text{\rm 5th}_\kappa$ is
\begin{align}\label{5th q}
\tfrac{d\ }{dt}  \, q(x) =  -64\kappa^7 g'(x;\kappa,q) - 16 \kappa^4 q'(x) + 4\kappa^2 \bigl[ -q'''(x) + 6 q(x)q'(x) \bigr],
\end{align}
which is globally well-posed on $B_\delta$ for any $\kappa\geq1;$ moreover, under this flow,
\begin{align}\label{5th kappa 1/g}
\tfrac{d\ }{dt}  \, \tfrac{1}{2g(x;\vk)} &= \tfrac{16\kappa^7}{\kappa^2-\vk^2} \Bigl( \tfrac{g(x;\kappa)}{g(x;\vk)} \Bigr)' - 16 \kappa^4 \Bigl( \tfrac{1}{2g(x;\vk)} \Bigr)'
    + 4\kappa^2 \Bigl( \tfrac{q(x)-2\vk^2}{g(x;\vk)}  \Bigr)'
\end{align}
for any $1\leq \vk < \kappa$ and $\alpha(q;\vk)$ is conserved.\\
\emph{(b)} Consider now the $H_\text{\rm 5th}$ flow \eqref{5th}, which is globally well-posed on $H^2(\R)$.  For initial data in $B_\delta\cap H^2$ and any $\vk\geq 1$,
\begin{align}\label{5th 1/g}
\tfrac{d\ }{dt}  \, \tfrac{1}{2g(x;\vk,q(t))} &= \Bigl(\tfrac{-q''(t,x)+3q(t,x)^2-4\vk^2q(t,x)+8\vk^4}{g(x;\vk,q(t))} \Bigr)';
\end{align}
moreover, $H^\text{\rm 5th}_\vk$ is conserved.\\
\emph{(c)} Let $Q\subseteq B_\delta\cap H^2$ be $L^2$-equicontinuous and let
\begin{align}\label{Q star 5th}
Q^* = \bigl\{ e^{J\nabla(t H_\text{\rm 5th} + s H^\text{\rm 5th}_\kappa)} q : q\in Q,\ t,s\in \R,\text{ and } \kappa\geq 1 \bigr\}.
\end{align}
Then $Q^*$ is $L^2$-equicontinuous and so convergence in \eqref{A5tozero} is uniform on $Q^*$.
\end{prop}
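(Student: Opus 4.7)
The proof runs in close parallel to the KdV treatment in Sections~3--5. For part (a), the formula \eqref{5th q} is a direct computation: combining \eqref{trivial delta} with \eqref{delta alpha} yields $\frac{\delta H^\text{5th}_\kappa}{\delta q} = 64\kappa^7\bigl(\tfrac{1}{2\kappa} - g(\,\cdot\,;\kappa)\bigr) - 16\kappa^4 q + 4\kappa^2(-q'' + 3q^2)$, and differentiating in $x$ produces the stated flow. Global well-posedness on $B_\delta$ exploits the identity $H^\text{5th}_\kappa = 4\kappa^2(H_\text{KdV} - H_\kappa)$: since the KdV and $H_\kappa$ flows Poisson-commute by Proposition~\ref{P:H kappa}, the $H^\text{5th}_\kappa$ flow factors as $e^{4\kappa^2 t J\nabla H_\text{KdV}} \circ e^{-4\kappa^2 t J\nabla H_\kappa}$, reducing its well-posedness on $B_\delta$ to that of its two factors. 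The first is Bourgain's $L^2$ theorem \cite{MR1215780}; the second is the verbatim $L^2$-analogue of Proposition~\ref{P:H kappa}, whose Picard proof carries over because Proposition~\ref{P:A2}(a) upgrades the diffeomorphism to $L^2 \to H^2$, so $g'$ is Lipschitz from $L^2$ into $H^1$ (once the transport term $4\kappa^2 q'$ is absorbed via translation). The evolution \eqref{5th kappa 1/g} follows by taking $4\kappa^2$ times the difference of \eqref{E:l5.1g} (with $\kappa$ replaced by $\vk$) and \eqref{H kappa flow g}, after the additive constants $4\vk^3$, $\vk$, and $\vk/\kappa$ drop under $\partial_x$. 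Conservation of $\alpha(\vk)$ under $H^\text{5th}_\kappa$ is then inherited from its separate conservation under KdV and under $H_\kappa$.

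For part (b), global well-posedness of \eqref{5th} in $H^2(\R)$ is standard and follows from the local theory of \cite{MR3096990,MR3301874} together with conservation of $P$ and $H_\text{5th}$, which control the $H^2$ norm. To derive the microscopic law \eqref{5th 1/g}, I would employ the Lax pair $\dot L = [P_5,L]$ for \eqref{5th}: exactly as in the alternative derivation of \eqref{E:l5.1c} indicated in the proof of Proposition~\ref{L:5.2}, the resolvent identity $\dot R = [P_5,R]$ produces a microscopic expression for $\dot g$, which after absorbing higher derivatives of $g$ via the elliptic PDE \eqref{E:l5.1a} and then applying the chain rule to pass from $g$ to $1/(2g)$ reduces to \eqref{5th 1/g}. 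A concrete alternative is to take $\kappa \to \infty$ in \eqref{5th kappa 1/g} using the asymptotic expansion $g(x;\kappa) = \tfrac{1}{2\kappa} - \tfrac{q}{4\kappa^3} + \tfrac{-q''+3q^2}{16\kappa^5} + O(\kappa^{-7})$ from the proof of Proposition~\ref{P:A2}; a short computation shows that the $\kappa^4$, $\kappa^2\vk^2$, and $\kappa^2 q$ contributions cancel precisely, leaving $\bigl(\tfrac{-q''+3q^2 - 4\vk^2 q + 8\vk^4}{g(\vk)}\bigr)'$. Conservation of $H^\text{5th}_\vk$ then follows: integrating \eqref{5th 1/g} over $x$ yields conservation of $\alpha(\vk)$, while $P$ and $H_\text{KdV}$ are classically conserved by \eqref{5th}, so the linear combination $H^\text{5th}_\vk = 64\vk^7\alpha(\vk) - 16\vk^4 P + 4\vk^2 H_\text{KdV}$ is as well.

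For part (c), the $L^2$-analogue of Lemma~\ref{L:equi 2} is that an $L^2$-bounded subset of $B_\delta$ is $L^2$-equicontinuous if and only if $\|q\|_{L^2}^2 - 4\vk^3\alpha(\vk;q) \to 0$ as $\vk \to \infty$ uniformly on the subset. This rests on the identity $\int \tfrac{\xi^2|\hat q(\xi)|^2}{\xi^2+4\vk^2}\,d\xi = \|q\|_{L^2}^2 - \int \tfrac{4\vk^2|\hat q(\xi)|^2}{\xi^2+4\vk^2}\,d\xi \approx \|q\|_{L^2}^2 - 4\vk^3\alpha(\vk;q)$ (via \eqref{alpha as I2}), combined with the sandwich $\int_{|\xi|>\vk}|\hat q|^2\,d\xi \lesssim \int\tfrac{\xi^2|\hat q|^2}{\xi^2+4\vk^2}\,d\xi$ and a high-low splitting as in Lemma~\ref{L:equi 1}. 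Since parts (a) and (b) establish that both $P = \tfrac12\|q\|_{L^2}^2$ and $\alpha(\vk)$ are conserved under the two flows defining $Q^*$, $L^2$-equicontinuity transfers from $Q$ to $Q^*$, and the uniform convergence in \eqref{A5tozero} on $Q^*$ is then precisely the last clause of Proposition~\ref{P:A2}(b). The main technical obstacle lies in the microscopic law \eqref{5th 1/g} of part (b): the Lax-pair route requires assembling the fifth-order operator $P_5$ and executing a lengthier analogue of Lemma~\ref{L:G ibp}, while the limiting route from \eqref{5th kappa 1/g} demands careful bookkeeping of the $\kappa^4$- and $\kappa^2$-terms that must cancel in the limit.
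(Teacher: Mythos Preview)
Your argument for part~(a) matches the paper's. For part~(b), both of your proposed routes are workable in principle, but the paper avoids the extra labor via a device you did not spot: the biHamiltonian identity
\[
\ddt q = (-\partial_x^3 + 2 q\partial_x  + 2\partial_x q + 4\vk^2\partial_x) (-q''+3q^2-4\vk^2q) + 16\vk^4 q'
\]
rewrites \eqref{5th} so that Lemma~\ref{L:G ibp} applies directly with $f=-q''+3q^2-4\vk^2q$, yielding \eqref{5th 1/g} in two lines and with no new integration-by-parts lemma. Your route~1 would require building a fifth-order analogue of Lemma~\ref{L:G ibp}; your route~2 is more delicate than you suggest, because passing from \eqref{5th kappa 1/g} to \eqref{5th 1/g} by letting $\kappa\to\infty$ needs convergence of the vector fields $J\nabla H^\text{5th}_\kappa(q)\to J\nabla H_\text{5th}(q)$, and this in turn requires the $\kappa^{-7}$ term in the expansion of $g(\,\cdot\,;\kappa)$, which is one order beyond what Proposition~\ref{P:A2} establishes.

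There is a genuine gap in your part~(c). You write
\[
\|q\|_{L^2}^2 - \int \tfrac{4\vk^2|\hat q(\xi)|^2}{\xi^2+4\vk^2}\,d\xi \approx \|q\|_{L^2}^2 - 4\vk^3\alpha(\vk;q)
\]
``via~\eqref{alpha as I2}'', but \eqref{alpha as I2} only gives $\alpha(\vk;q)\approx \vk^{-1}\int\tfrac{|\hat q|^2}{\xi^2+4\vk^2}$ with unspecified multiplicative constants. Subtracting two quantities that are merely comparable does not yield a quantity comparable to their difference; if the implicit constant in \eqref{alpha as I2} differed from $\tfrac12$, the expression $\|q\|_{L^2}^2 - 4\vk^3\alpha(\vk;q)$ would not tend to zero on an equicontinuous set. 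What is actually needed is the additive estimate
\[
\Bigl| 4\vk^3\alpha(\vk;q) - \tfrac12\!\int \tfrac{4\vk^2 |\hat q(\xi)|^2}{\xi^2+4\vk^2}\,d\xi \Bigr|
\leq 4\vk^3 \sum_{\ell\geq 3} \bigl\| \sqrt{R_0}\, q\, \sqrt{R_0} \bigr\|^{\ell}_{\op}
\lesssim  \vk^{-3/2},
\]
obtained by bounding the tail $\ell\geq 3$ of the series~\eqref{det series} for $\alpha$ via~\eqref{R I2}. With this in hand your equicontinuity criterion is correct, and the rest of part~(c) goes through as you describe.
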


\begin{proof}
As $H^\text{5th}_\kappa = 4\kappa^2[ H_\text{KdV} - H_\kappa ]$, equation \eqref{5th q} follows from \eqref{H kappa flow q}, while \eqref{5th kappa 1/g} follows directly from Propositions~\ref{L:5.2} and~\ref{P:H kappa}.  Well-posedness and commutativity of these constituent flows (together with conservation of $P$) immediately guarantees the well-posedness of this flow in the $H^{-1}$ topology on $B_\delta$.  Continuity in the $L^2$-topology will follow when we prove part (c) below.

The conservation of $\alpha(q;\vk)$ under the $H^\text{5th}_\kappa$ follows from its conservation under the $H_\text{KdV}$ and $H_\kappa$ flows.  It may also be deduced directly from \eqref{5th q} and \eqref{5th kappa 1/g}.

As noted earlier, the well-posedness of \eqref{5th} on $H^2(\R)$ is a result of \cite{MR3096990,MR3301874}.  To verify \eqref{5th 1/g}, we first observe that \eqref{5th} can be rewritten as
$$
\ddt q = (-\partial_x^3 + 2 q\partial_x  + 2\partial_x q + 4\vk^2\partial_x) (-q''+3q^2-4\vk^2q) + 16\vk^4 q' ,
$$
which is little more that the biHamiltonian relation for the KdV hierarchy.  Applying Lemma~\ref{L:G ibp}  just as in the proof of Proposition~\ref{L:5.2}, we deduce that
\begin{align*}
\tfrac{d\ }{dt}  \, g &= -2(-q''+3q^2-4\vk^2q)'g + 2(-q''+3q^2-4\vk^2q)g' + 16\vk^4 g'
\end{align*}
where we abbreviate  $g=g(x;\vk,q)$.  This then leads to \eqref{5th 1/g} via the chain rule.

To verify that the $H_\text{\rm 5th}$ flow conserves $H^\text{5th}_\vk$, it suffices to verify that it conserves $P(q)$, $H_\text{KdV}(q)$, and $\alpha(q;\vk)$.  The first two follow from elementary calculations, while the conservation of $\alpha$ follows from \eqref{5th} and \eqref{5th 1/g}.

The conservation of $\alpha$ holds the key to proving part (c); we simply need to modify Lemma~\ref{L:equi 2} in the following way (cf. \cite[Proposition~3.6]{KVZ}):  In view of \eqref{R I2},
\begin{align*}
\biggl| 4\vk^3\alpha(q;\vk) - \tfrac12\int \frac{4\vk^2 |\hat q(\xi)|^2\,d\xi}{\xi^2+4\vk^2} \biggr|
    &\leq 4\vk^3 \sum_{\ell=3}^\infty \bigl\| \sqrt{R_0} q \sqrt{R_0} \bigr\|^{\ell}_{\op}
        \lesssim  \vk^{-3/2}
\end{align*}
uniformly for $q\in B_\delta$ and $\vk\geq 1$, provided $\delta$ is sufficiently small.  Thus
$$
\text{$Q^*$ is $L^2$-equicontinuous} \iff [P(q)-4\vk^3\alpha(q;\vk)] \to 0\ \text{uniformly on $Q^*$ as $\vk\to\infty$}
$$
and (by conservation of $\alpha$) this holds if and only if $Q$ is $L^2$-equicontinuous.
\end{proof}

\begin{proof}[Proof of Theorem~\ref{T:5th}]
Reviewing what has gone before, we see that it suffices to show that for any $L^2$-equicontinuous set $Q\subseteq B_\delta\cap H^2$,
\begin{align}\label{destination A1}
\lim_{\kappa\to\infty} \sup_{q\in Q^*} \ \sup_{|t|\leq T}\ \| e^{tJ\nabla (H_\text{\rm 5th} - H_\kappa^\text{\rm 5th})} q - q \|_{L^2} =0.
\end{align}

With a view to further reduction, let us define
$$
g(t,x) = g(x;\vk,q(t)) \qtq{where} q(t) = e^{tJ\nabla(H_\text{\rm 5th} - H^\text{\rm 5th}_\kappa)} q \qtq{and} q\in Q^*.
$$
In view of Proposition~\ref{P:A2}(a), \eqref{translation identity}, and the equicontinuity of $Q^*$, we see that
$$
E:=\Bigl\{ \tfrac{1}{2 g(t,x)} - \vk : q\in Q^* \textrm{ and }t\in\R\Bigr\} \quad\text{is $H^2(\R)$-equicontinuous.}
$$
Thus, by Lemma~\ref{L:equi 1}(ii) and Proposition~\ref{P:A2}(a), we may verify \eqref{destination A1} by showing that
\begin{align}\label{E:5th finale}
\bigl\| \tfrac{d\ }{dt} \bigl(\varkappa - \tfrac{1}{2g(t;\vk)}\bigr) \bigr\|_{H^{-3}} \to 0
\end{align}
as $\kappa\to\infty$ uniformly for $q\in Q^*$ and $t\in\R$.

We now verify \eqref{E:5th finale}.  Combining \eqref{5th kappa 1/g} and~\eqref{5th 1/g}, we deduce (after considerable rearrangement) that
\begin{align*}
\tfrac{d\ }{dt}  \, \tfrac{1}{2g(x;\vk)} &= \Bigl( \tfrac{-q''(t,x)+3q(t,x)^2 - 16\kappa^5(g(x;\kappa)-\frac{1}{2\kappa})-4\kappa^2q}{g(x;\vk)} \Bigr)' \\
    &\qquad  + \vk^2 \Bigl( \tfrac{-4q(t,x) - 16\kappa^3(g(x;\kappa)-\frac{1}{2\kappa})}{g(x;\vk)} \Bigr)' \\
    &\qquad  - \tfrac{\vk^4\kappa^2}{\kappa^2-\vk^2}\Bigl( \tfrac{16\kappa(g(x;\kappa)-\frac{1}{2\kappa})}{g(x;\vk)} \Bigr)' - \tfrac{\vk^6}{\kappa^2-\vk^2}\Bigl( \tfrac{8}{g(x;\vk)} \Bigr)' .
\end{align*}
That this converges to zero in the desired sense then follows trivially from \eqref{A5tozero}.
\end{proof}

\end{document}